\newtheorem{prop}{Proposition}
\newtheorem{lema}{Lemma}
\newtheorem{rmk}{Remark}
\newtheorem{thm}{Theorem}
\newtheorem*{thmA}{Theorem A}
\newtheorem*{thmB}{Theorem B}
\newtheorem*{thmC}{Theorem C}
\renewcommand{\phi}{\varphi}
\newcommand{\dvg}{\text{d}v_g}
\newcommand{\dvl}{\text{d}a_g}
\renewcommand{\d}{\text{d}}
\newcommand{\Br}{\mathbb{B}^n(r)}
\newcommand{\R}{\mathbb{R}}
\newcommand{\Sn}{\mathbb{S}^n}
\renewcommand{\SS}{\mathbb{S}}
\newcommand{\canSn}{g_{can \SS^n}}
\title{Free boundary minimal Möbius band  in spherical caps}
\author{Mateus Spezia}
\address{IMPA - Instituto de Matematica Pura e Aplicada, Rio de Janeiro, RJ, Brasil.}
\email{mateus.spezia@impa.br}
\begin{document}
\begin{abstract}
We study compactly free boundary minimal submanifolds in spherical caps $\Br$ and their geometric spectral properties. Following the foundational work of Fraser-Schoen \cite{FS2012}, Lima-Menezes \cite{LM23} established the connection between free boundary minimal surfaces in spherical caps and spectral geometry. In this work, we present three main contributions: (1) We prove that any free boundary minimal Möbius band in $\Br$ immersed by first Steklov eigenfunctions must be intrinsically rotationally symmetric ; (2) We explicitly construct such a Möbius band in $\mathbb{B}^4(r)$ for $0<r<\frac{\pi}{2}$; and (3) We generalize Morse index estimates for free boundary minimal submanifolds in spherical caps, showing that non totally geodesic immersions have index at least $n$.
\end{abstract}
\maketitle
\section{Introduction}

The relation between minimal submanifolds in simply connected constant sectional curvature spaces and spectral problems has been studied for a long time. Takahashi \cite{Takahashi66} proved that a  submanifold $M^k$ of the unit Euclidean sphere $\mathbb{S}^n\subset \mathbb{R}^{n+1}$ is minimal if and only if its coordinate functions  $x_i=\langle p,e_i\rangle$  (where $p\in M$  and $\{e_i\}_{0\le i\le n}$ is the canonical basis of $\R^{n+1}$) are eigenfunctions, corresponding to the eigenvalue $k$, of the Laplace-Beltrami operator of the induced metric on $M$. 

In  a foundational work, Fraser and Schoen \cite{FS2011,FS2012} established an analogous connection for free boundary submanifolds in Euclidean balls. Here, \emph{free boundary} means that the submanifold $M \subset \mathbb{B}^n$ meets the boundary of the ball $\partial \mathbb{B}^n$ orthogonally, i.e., the outward unit conormal $\nu$ of $\partial M$ coincides with the position vector $x$, i.e., the ambient normal to $\partial \mathbb{B}^n$. They observed that such a submanifold $M$ is minimal and free boundary if and only if  its coordinate functions  satisfy
\[
\Delta_g x_i = 0 \quad \text{in } M, \quad \frac{\partial x_i}{\partial \nu} = x_i \quad \text{on } \partial M.
\]

Recently, Lima and Menezes \cite{LM23} considered a  new spectral problem related to free boundary minimal surfaces in \textit{spherical caps}. This problem  also was studied by Medvedev\cite{medvedev2025}.  Let \( \mathbb{S}^n \subset \mathbb{R}^{n+1} \) be the unit sphere, with \( e_0 = (1, 0, \dots, 0), \dots, e_n = (0, \dots, 1) \) denoting the canonical basis. For \( 0 < r < \pi \), the \textit{spherical cap} \( \Br \) is the geodesic ball
\[
\Br = \{ x = (x_0, x_1, \dots, x_n) \in \mathbb{S}^n \mid x_0 \ge \cos r \}.
\]

Inspirated by the Takahashi's result and the fact that the unit normal to the spherical cap $\partial\Br$ is given by
\begin{equation}\label{eq:Unity_normal_to_spherical_cap}
    N_{\partial\Br}(p)=\frac{1}{\sin r}(\cos r p-e_o)
\end{equation}
Lima and Menezes prove that if $\Phi: (\Sigma^k, g)\to \Br$ is a free boundary minimal immersion, then the coordinate functions $\phi_i=x_i\circ\Phi$ satisfy

\begin{equation}\label{eq:minimal_condition}
\Delta_g \varphi_i + k \varphi_i = 0, \quad \text{in } \Sigma, \quad i = 0, 1, \ldots, n, 
\end{equation}
\begin{equation}\label{eq:free_boundary_condition}
    \begin{split}
        \frac{\partial \varphi_0}{\partial \nu} + (\tan r)\varphi_0 &= 0, \quad \text{on } \partial \Sigma,  \\
\frac{\partial \varphi_i}{\partial \nu} - (\cot r)\varphi_i &= 0, \quad \text{on } \partial \Sigma, \quad i = 1, \ldots, n. 
\end{split}
\end{equation}

\noindent where $\nu$ is the outward
pointing unit $g$-normal vector of $\partial \Sigma$.

Lima-Menezes, following the ideas of  the work of Fraser and Schoen, established a connection between free boundary minimal surfaces and a Steklov-type eigenvalue problem. The \textit{Steklov eigenvalue problem} involves the following construction: Given a compact surface $(\Sigma, g)$ with boundary and a frequency parameter $\alpha$, where $\alpha$ is not a Dirichlet eigenvalue of $\Delta_g$, for any boundary function $u \in C^\infty(\partial \Sigma)$ there exists a unique extension $\hat{u} \in C^\infty(\Sigma)$ solving the boundary value problem:
\[
\Delta_g \hat{u} + \alpha \hat{u} = 0 \quad \text{with} \quad \hat{u}|_{\partial \Sigma} = u.
\]
The corresponding \textit{Dirichlet-to-Neumann operator} $\mathcal{D}_\alpha$ is then defined as
\[
\mathcal{D}_\alpha(u) := \frac{\partial \hat{u}}{\partial \nu},
\]
where $\nu$ is the outward-pointing unit conormal to $\partial \Sigma$.

A \( \sigma \in \mathbb{R} \) is called a \textit{Steklov eigenvalue with frequency \( \alpha \)} if
\[
\mathcal{D}_\alpha u = \frac{\partial \hat{u}}{\partial \nu} = \sigma u.
\]
The spectrum of \( \mathcal{D}_\alpha \) is discrete and satisfies
\[
\sigma_0(g, \alpha) < \sigma_1(g, \alpha) \leq \sigma_2(g, \alpha) \leq \cdots \to \infty.
\]
See the work \cite{ArendtMazzeo} for more details on the Steklov problem with frequency.

The case of surfaces in this setting is particularly interesting, as free boundary minimal surfaces correspond to metrics that extremize certain Steklov eigenvalue functionals. Fraser and Schoen studied the Steklov eigenvalue problem with frequency $\alpha = 0$. For a compact surface $(\Sigma, g)$ with boundary, they considered the first nontrivial Steklov eigenvalue $\sigma_1(g,0)$ and its normalized version \( \overline{\sigma}_1(g) = \sigma_1(g) |\partial \Sigma|_g,\)
where $|\partial \Sigma|_g$ denotes the boundary length. Their fundamental results show that:

\begin{itemize}
\item Any metric $(M, g)$ achieving $\sigma_1^*(M) = \sup_g \overline{\sigma}_1(g)$ arises from a free boundary minimal immersion into some Euclidean ball, \textbf{Theorem 1.1} in \cite{FS2012} 
\item The supremum $\sigma_1^*(M)$ is finite, \textbf{Theorem 2.2}  in \cite{FS2012}, and attained in the case of anulus and the Möbius band, see \textbf{Theorem 1.2}  and \textbf{Theorem 1.4}  in \cite{FS2012}. 
\end{itemize}

Subsequent work by Petrides \cite{Petrides} and Karpukhin-Kusner-McGrath-Stern \cite{KKMS} established the existence of maximizing metrics for all surfaces with boundary of genus zero or one. Currently, the only explicitly known maximizing metrics are those induced by the critical catenoid and the critical Möbius band \cite{FS2012}. For comprehensive surveys of these results, we refer to \cite{Lisurvey} and \cite{surveySteklov}.

Lima and Menezes were particularly interested in Steklov eigenvalues with frequency \( 2 \). For simplicity, we refer to them simply as \textit{Steklov eigenvalues} and denote \( \sigma_i(g, 2) \). They  introduced a new functional on the space \(\mathcal{M}(\Sigma) = \{ g \in \text{Riem}(\Sigma) \mid \lambda_1(g) > 2 \}\),
defined by
\[
\Theta_r(\Sigma, g) = \left[ \sigma_0(g,2) \cos^2 r + \sigma_1(g,2) \sin^2 r \right] |\partial \Sigma|_g + 2 |\Sigma|_g,
\]
and proved that:
\begin{itemize}
    \item The supremum of $\Theta_r(\Sigma, g)$ is  always finite, see \textbf{Theorem A} in  \cite{LM23}, and  maximizing metrics for \( \Theta_r(\Sigma, g) \) are induced by free boundary minimal immersions into spherical caps \(\Br\), where the immersion is given by first Steklov eigenfunctions, see \textbf{Theorem B} in \cite{LM23}
    \item They characterized  immersion given by first Steklov eigenfunction in the case of annuli, see \textbf{Theorem C} in \cite{LM23}.
\end{itemize}
Since this work deals exclusively with Steklov eigenvalues of frequency $\alpha=2$, we adopt the convenient terminology of referring to them simply as\textit{Steklov eigenvalues}, and writing $\sigma_k(g,2)=\sigma_k(g)$. Given an immersion 
\[
\Phi = (\phi_0, \phi_1, \dots, \phi_n) : (\Sigma, g) \to \mathbb{B}^n(r),
\]
we call $\Phi$ a \textbf{immersion by first Steklov eigenfunctions} if it satisfies:
\begin{itemize}
    \item $\phi_0$ is a $\sigma_0(g)$-Steklov eigenfunction
    \item $\phi_i$ are $\sigma_1(g)$-Steklov eigenfunctions for $i = 1, \dots, n$
\end{itemize}

In this work, or main interest is study free boundary Möbius band in spherical caps. For an study of the free boundary Möbius band in the Euclidean ball, see the work of Fraser and Sargent \cite{FraserSargent}. We focus on classifying free boundary minimal M\"obius bands in \( \Br \) immersed by first Steklov eigenfunctions, we prove that such M\"obius bands must be \textbf{intrinsically rotationally symmetric} in sense that, there exist local coordinates $(s,\theta)$ in which the Riemannian metric takes the conformal form $g=\rho(s,\theta)(ds^2+d\theta^2)$ and \( \rho \) depends only on \( s \), i.e. $\frac{\partial \rho}{\partial \theta} = 0$.

\begin{thmA}\label{thm:TheoremA}
        Let $0<r<\frac{\pi}{2}$, let  \((\Sigma, g)\) be a Möbius band, if  \(\Phi = (\phi_0, \ldots, \phi_n): (\Sigma, g) \to \mathbb{B}^n(r)\) is a free boundary minimal immersion by first Steklov eigenfunction. Then \(\Sigma\) is intrinsically rotationally symmetric. 
\end{thmA}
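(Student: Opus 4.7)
The plan is to pass to the orientation double cover $\pi:\tilde\Sigma\to\Sigma$, topologically an annulus, and fix global isothermal coordinates $(s,\theta)\in[-T,T]\times\mathbb{R}/2\pi\mathbb{Z}$ in which the deck involution takes the normal form $\tau(s,\theta)=(-s,\theta+\pi)$. Then $\tilde g:=\pi^*g=\rho(s,\theta)(ds^2+d\theta^2)$ with $\rho\circ\tau=\rho$, and the lifts $\tilde\phi_i:=\phi_i\circ\pi$ are $\tau$-invariant solutions of \eqref{eq:minimal_condition}--\eqref{eq:free_boundary_condition} on $\tilde\Sigma$. The theorem is equivalent to $\partial_\theta\rho\equiv 0$.

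\textbf{Reduction to $\tilde\phi_0$.} Since $\Phi(\partial\Sigma)\subset\{x_0=\cos r\}$, we have $\tilde\phi_0\equiv\cos r$ on $\partial\tilde\Sigma$, and then \eqref{eq:free_boundary_condition} forces $\partial_\nu\tilde\phi_0\equiv-\sin r$. In isothermal form, \eqref{eq:minimal_condition} reads $(\partial_s^2+\partial_\theta^2)\tilde\phi_0=-2\rho\tilde\phi_0$. If $\tilde\phi_0=f(s)$, then $\rho(s,\theta)=-f''(s)/[2f(s)]$ is $\theta$-independent (with $f>0$ by the maximum principle, since $\lambda_1(g)>2$); conversely, if $\rho=\rho(s)$, the Dirichlet problem for $\tilde\phi_0$ is $\theta$-translation invariant, so by uniqueness (again valid because $\lambda_1(g)>2$ in the setting of \cite{LM23}) $\tilde\phi_0$ must be $\theta$-independent. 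Hence it suffices to prove $\partial_\theta\tilde\phi_0\equiv 0$.

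\textbf{Hopf-type holomorphic differential.} Introduce the scalar $q(z):=\sum_{i=0}^n(\partial_z^2\tilde\phi_i)^2$ with $z=s+i\theta$. A direct computation using $\sum_i\tilde\phi_i^2\equiv 1$, conformality $\sum_i(\partial_z\tilde\phi_i)^2\equiv 0$, and \eqref{eq:minimal_condition} gives $\partial_{\bar z}q\equiv 0$; checking the conformal transformation law (the terms involving $\sum(\partial_z\tilde\phi_i)^2$ and $\sum(\partial_z\tilde\phi_i)(\partial_z^2\tilde\phi_i)$ both vanish), one finds that $q(z)\,dz^4$ is a well-defined holomorphic quartic differential on $\tilde\Sigma$. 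The free boundary conditions \eqref{eq:free_boundary_condition}, together with $\tilde\phi_0\equiv\cos r$ on $\partial\tilde\Sigma$, yield that $q$ is real-valued on the boundary circles $\{s=\pm T\}$; the $\tau$-invariance of each $\tilde\phi_i$ and the orientation-reversing character of $\tau$ (so $\tau^*dz=-d\bar z$) give the descent relation $q(\tau(z))=\overline{q(z)}$. Laurent-expanding $q(z)=\sum_{m\in\mathbb{Z}}a_m e^{mz}$ and imposing reality at $s=\pm T$ forces $a_m=0$ for every $m\neq 0$ and $a_0\in\mathbb{R}$; hence $q\equiv c$ for some real constant.

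\textbf{Conclusion and main obstacle.} Setting $\psi:=\partial_\theta\tilde\phi_0$ and differentiating \eqref{eq:minimal_condition}--\eqref{eq:free_boundary_condition} in $\theta$, one finds that $\psi$ solves
\[
\Delta_g\psi+2\psi=-2\frac{\partial_\theta\rho}{\rho}\tilde\phi_0\ \text{in}\ \tilde\Sigma,\qquad \psi\equiv 0,\ \ \partial_\nu\psi=-\tfrac{\sin r}{2\rho}\partial_\theta\rho\ \text{on}\ \partial\tilde\Sigma.
\]
Pairing this system against $\tilde\phi_0$ and the $\tilde\phi_i$ for $i\geq 1$, and combining with the rigidity $q\equiv c$ together with the over-determined boundary data for $\tilde\phi_0$, should yield enough integral identities to force $\psi\equiv 0$, and hence $\partial_\theta\rho\equiv 0$ by the reduction step. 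The main obstacle is precisely this last translation from the algebraic rigidity of $q$ to the analytic conclusion $\psi\equiv 0$: the argument must accommodate arbitrary ambient dimension $n$, and the codimension-one Hopf-differential tricks that simplify the analogous Euclidean-ball statement in \cite{FS2012} are not directly available here, so one likely needs to exploit the full spectral structure of the $n$-dimensional first Steklov eigenspace.
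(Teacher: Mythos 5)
There is a genuine gap, and you identify it yourself: the entire content of the theorem lies in the step you defer to the end. Your preliminary observations are sound — the reduction to $\partial_\theta\tilde\phi_0\equiv 0$ works because $\phi_0>0$ and $\rho=-\Delta_{g_0}\tilde\phi_0/(2\tilde\phi_0)$; the quartic form $q=\sum_i(\partial_z^2\tilde\phi_i)^2=\langle B(\partial_z,\partial_z),B(\partial_z,\partial_z)\rangle$ is indeed holomorphic (one checks $\langle\Phi_{zz\bar z},\Phi_{zz}\rangle=0$ using $\Phi_{z\bar z}=-\tfrac{\rho}{2}\Phi$ together with $\langle\Phi,\Phi_{zz}\rangle=\langle\Phi_z,\Phi_{zz}\rangle=0$), is real on $\{s=\pm T\}$ (the cross term $\langle\Phi_{ss}-\Phi_{\theta\theta},\Phi_{s\theta}\rangle$ reduces to $\rho_\theta\bigl(\tfrac{\rho_s}{2\rho}-\sqrt{\rho}\cot r\bigr)$, which vanishes because the free boundary condition forces $\rho_s=2\rho^{3/2}\cot r$ on $\partial\tilde\Sigma$), and the two-boundary reality argument does force $q\equiv c\in\mathbb{R}$. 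But $q\equiv c$ is a statement about the second fundamental form only; it does not control the conformal factor, and there is no visible mechanism by which it, together with the inhomogeneous system you write for $\psi=\partial_\theta\tilde\phi_0$, forces $\psi\equiv 0$. The decisive structural objection is that everything you prove uses only minimality, conformality, and the free boundary condition — nowhere do you use the hypothesis that the $\phi_i$ are \emph{first} Steklov eigenfunctions. That hypothesis is what makes the theorem true and is the engine of any proof: it is needed to turn suitable test functions into the variational inequality $\int_\Sigma|\nabla f|^2-2f^2+\cot r\int_{\partial\Sigma}f^2\ge 0$, and without it the rigidity cannot follow. An argument that never invokes it cannot close.

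For comparison, the paper's proof is entirely different in mechanism. It first establishes the index estimate (Theorem C): the explicit normal fields $V_y=\phi_y\partial_0^\perp-\phi_0\partial_y^\perp+c(r)\partial_y^\perp$ satisfy $\mathcal{I}(V_y,V_y)=-kc(r)^2\int_\Sigma\|\partial_y^\perp\|^2<0$ and span an $n$-dimensional space. It then solves a $\bar\partial$-problem (Proposition \ref{Prop:ConformalIndex}) to complete an $n$-dimensional subspace of $\mathrm{span}\{V_1,\dots,V_n,\partial_0^\perp\}$ to \emph{conformal} ambient fields $Y(V)=V+Y^\top$ tangent to $\partial\mathbb{B}^n(r)$, on which the energy index form satisfies $Q(Y(V),Y(V))=\mathcal{I}(V,V)\le 0$. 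A rank–nullity balancing argument produces $Y(V)+c\Phi_\theta$ with vanishing boundary integral, so its components are admissible test functions for $\sigma_1=\cot r$, giving $Q\ge 0$; equality forces $V=0$, hence $\int_{\partial\Sigma}\Phi_\theta=0$, and then the components of $\Phi_\theta$ themselves are test functions with $Q(\Phi_\theta,\Phi_\theta)=0$, whose equality case yields $\partial_\theta\rho\cdot\phi_i=0$ and hence $\partial_\theta\rho\equiv 0$. If you want to salvage your approach, the constant quartic differential could at best be a supplementary identity; the test-function/index machinery (or some substitute that genuinely exploits $\sigma_1(g)=\cot r$) has to be supplied.
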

In particular, if we can prove that there exists a maximizing metric for \( \Theta_r(\Sigma, g) \) for the topology of the Möbius band, then it must be intrinsically rotationally symmetric. This result is analogous to the uniqueness of the critical Möbius band in the Euclidean ball (\textbf{Theorem 7.4} in \cite{FS2012}).

For each \( 0 < r < \frac{\pi}{2} \), we construct a intrinsically rotationally symmetric free boundary minimal Möbius band in the spherical cap \( \mathbb{B}^4(r) \).

\begin{thmB}
Let \( 0 < r < \pi/2 \). Then there exists a free boundary minimal immersion of a Möbius band by first Steklov eigenfunction
\[
\Phi = (\phi_0, \dots, \phi_4) : (\mathbb{M}, g) \to \mathbb{B}^4(r).
\]
\end{thmB}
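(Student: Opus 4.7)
Guided by Theorem A, I would construct the immersion $\Phi$ via an explicit intrinsically rotationally symmetric ansatz. Parametrize the Möbius band $\mathbb{M}$ as the quotient of $[-T,T]\times\mathbb{R}$ by $\theta\sim\theta+2\pi$ and $(t,\theta)\sim(-t,\theta+\pi)$, write the metric as $g=\rho(t)(dt^2+d\theta^2)$, and take
\[
\Phi(t,\theta)=\bigl(p(t),\, q(t)\cos\theta,\, q(t)\sin\theta,\, s(t)\cos 2\theta,\, s(t)\sin 2\theta\bigr),
\]
with $p,s$ even and $q$ odd so that $\Phi$ descends to $\mathbb{M}$. The condition $\Phi(\mathbb{M})\subset\mathbb{S}^4$ forces $p^2+q^2+s^2=1$; matching the induced with the conformal metric gives $\rho=q^2+4s^2$; and the minimality equations $\Delta_g\varphi_i+2\varphi_i=0$ reduce to the coupled system
\[
p''+2\rho\, p=0,\qquad q''+(2\rho-1)\, q=0,\qquad s''+(2\rho-4)\, s=0.
\]

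Parity at the origin enforces $p'(0)=q(0)=s'(0)=0$, and together with the algebraic constraints $p(0)^2+s(0)^2=1$ and $q'(0)^2=4s(0)^2$ (from $|\Phi|^2=1$ and conformality at $t=0$) this reduces the initial data to a single shooting parameter $p_0:=p(0)\in(0,1)$, with $s(0)=\sqrt{1-p_0^2}$ and $q'(0)=2\sqrt{1-p_0^2}$. A linear-ODE argument shows that the constraints $|\Phi|^2\equiv 1$ and $|\partial_t\Phi|^2\equiv\rho$ are preserved by the flow, so the candidate immersions form a two-parameter family indexed by $(p_0,T)\in(0,1)\times(0,\infty)$. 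The free boundary condition $\nu=N$ at $t=T$ becomes
\[
p(T)=\cos r,\;\; p'(T)=-\sin r\sqrt{\rho(T)},\;\; q'(T)=\cot r\sqrt{\rho(T)}\, q(T),\;\; s'(T)=\cot r\sqrt{\rho(T)}\, s(T),
\]
and a short computation using the conservation laws shows that any three of these force the fourth, with the correct sign along trajectories where $p$ is monotone decreasing on $[0,T]$ and $s(T)>0$.

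Treating $p(T)=\cos r$ as definitional for $r$ given $(p_0,T)$, the existence claim reduces to solving a two-equation system (any two of the remaining Steklov conditions) in the two unknowns $(p_0,T)$ for every prescribed $r\in(0,\pi/2)$. I would proceed by a shooting/continuity argument: for each $p_0\in(0,1)$, integrate the ODE and track the \emph{degeneracy defect}
\[
F(p_0,T):=\frac{q'(T)}{q(T)}-\frac{s'(T)}{s(T)},
\]
which measures the failure of the $n=1$ and $n=2$ angular modes to realize the same first positive Steklov eigenvalue; along the locus $\{F=0\}$ the associated value of $r$, read off from $\cos r=p(T)$, is a continuous function of $p_0$. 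The task is then to show this function covers $(0,\pi/2)$ by analyzing its limits as $p_0$ approaches the endpoints of the admissible interval (where the Möbius band degenerates) and invoking the intermediate value theorem. The main obstacle is the quantitative control of the ODE flow: verifying that the flow is well-defined up to the shooting time, that the trajectory gives a genuine immersion (in particular $\rho>0$ and $q,s$ not simultaneously vanishing on $(0,T]$), that the sign conventions in the free boundary relations are consistent with the dynamics, and that the extremes of the $p_0$-range produce the limiting values $r\to 0$ and $r\to \pi/2$. A more explicit route, which would sidestep these analytic difficulties and also give a concrete description of the resulting Möbius band, is to look for closed-form solutions of the ODE system in terms of Jacobi elliptic functions, in analogy with the $(2\sinh t, \cosh 2t)$ parametrization of the Fraser-Schoen critical Möbius band in the Euclidean ball.
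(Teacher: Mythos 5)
Your ansatz, the reduction to the ODE system for $(p,q,s)$ with the stated parities and initial data, and the two boundary conditions $p(T)=\cos r$, $p'(T)=-\sin r\sqrt{\rho(T)}$ are exactly the paper's setup (its Proposition~\ref{prop:Characterization_of_FB_minimal_mobius_band}). But from that point on your argument is a strategy outline rather than a proof, and the two steps you defer are precisely where the content lies. First, the shooting/IVT argument: you never establish that the locus $\{F=0\}$ is nonempty, that $r$ varies continuously along it, or what the limiting values of $r$ are at the ends of the $p_0$-range. The paper avoids this entirely by importing the two explicit first integrals $H_1,H_2$ of the system from El Soufi--Giacomini--Jazar, together with the invariant algebraic curve $a^2y^2-(3-4a^2)z^2+a^2(3-4a^2)=0$. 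This decouples the system: $y^2$, $z^2$ and hence $\rho$ become explicit functions of $x^2$, the function $x$ satisfies a single autonomous second-order ODE with a first-order reduction, and the free boundary condition collapses to the polynomial equation $(4a^2-5)\cos^2 r+(1-a^2)(3-8a^2)=0$, which visibly has a unique root $a\in(0,\sqrt{3/8})$ for each $r\in(0,\pi/2)$. Monotonicity of $x$ (Hopf lemma) plus a Sturm comparison then give the existence of $s_r$ and the sign conditions you flag as ``the main obstacle.'' Without some substitute for these conserved quantities, your continuity argument is not obviously closable.

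Second, and independently of the ODE analysis, the theorem asserts an immersion by \emph{first} Steklov eigenfunctions, so you must show that $\cot r$ is actually $\sigma_1(g)$ for the constructed metric (and $-\tan r=\sigma_0$). Your proposal never addresses this. The paper does it by separation of variables plus Courant's nodal domain theorem, and then uses Abel's formula to rule out a $\theta$-independent $\sigma_1$-eigenfunction and to show that the $k=1$ radial mode realizing $\cot r$ must be a multiple of $y$ (any independent solution has strictly larger Steklov eigenvalue). This step is essential to the statement as written and needs to be supplied.
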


The construction of these a Möbius band is inspired by \cite{Jakobson_Nadirashvili_Polterovich_2006} and \cite{ElSoufi_Giacomini_Jazar}, where the authors constructed a minimal Klein bottle in \( \mathbb{S}^4 \). An interesting geometric observation emerges from the proof of \textbf{Theorem B}: Half of the klein bottle construct in  \cite{ElSoufi_Giacomini_Jazar, Jakobson_Nadirashvili_Polterovich_2006} corresponds to a free boundary minimal Möbius band, as discussed in  \textbf{Remark \ref{rmk:r=pi/2}}. 

We also establish a result concerning the Morse index of free boundary minimal submanifolds in spherical caps \( B_r \), generalizing the codimension one case studied in \cite{LM23} and \cite{medvedev2025}. We prove: 
\begin{thmC}\label{thm:TheoremC}
    Let $r\neq  \frac{\pi}{2}$. Let \( \Phi: \Sigma^k \to \mathbb{B}^n(r) \) be a compact free boundary minimal immersion. If \( \Sigma \) is not contained in a hyperplane passing through the origin, then the Morse index of \( \Sigma \) is at least \( n \).
\end{thmC}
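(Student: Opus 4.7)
} My plan is to adapt the codimension-one argument of Lima--Menezes and Medvedev, where the test functions $\phi_1, \ldots, \phi_n$ yield the clean relation $Q(\phi_i N, \phi_j N) = -\int_\Si |A|^2 \phi_i \phi_j\, \dvg$ (using $\Delta \phi_i + k\phi_i = 0$, the Steklov condition $\partial_\nu \phi_i = \cot r\,\phi_i$, and integration by parts). For arbitrary codimension I replace these scalar test functions by admissible normal vector fields. Concretely, set $V_j := (e_j - \phi_j p)^\perp$ (the normal projection in $\SS^n$ onto the normal bundle of $\Si$ of the ambient basis vector $e_j$) and
\[
W_i := X_{0i}^\perp = \phi_0 V_i - \phi_i V_0, \qquad i = 1, \ldots, n,
\]
where $X_{0i}(p) = \phi_0 e_i - \phi_i e_0$ is the Killing field of $\SS^n$ generating the rotation in the $(e_0, e_i)$-plane. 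Since each $W_i$ lies in the normal bundle of $\Si$ in $\SS^n$ and the free-boundary condition forces $N_{\partial \Br}=\nu$ to lie in $T\Si$ along $\bS$, each $W_i$ is automatically tangent to $\partial\Br$ along $\bS$, hence admissible for the index form $Q$.

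The key computation is the $n \times n$ matrix $M_{ij} := Q(W_i, W_j)$. First I would derive the identity $\nabla^\perp_X V_j = -A(X, \nabla^\Si \phi_j)$ from the conformal Killing relation $\bar\nabla_X(e_j - \phi_j p) = -\phi_j X$ on $\SS^n$, together with $|V_j|^2 = 1 - \phi_j^2 - |\nabla^\Si \phi_j|^2$. Substituting these into the second-variation formula
\[
Q(V, V) = \int_\Si \bigl(|\nabla^\perp V|^2 - |A^V|^2 - k|V|^2\bigr)\,\dvg - \cot r \int_\bS |V|^2\, \dvl,
\]
(where $|A^V|^2 := \sum_{a, b}\langle A(e_a, e_b), V\rangle^2$, the $-k|V|^2$ term reflects the curvature of $\SS^n$, and the boundary integrand reflects the principal curvature $\cot r$ of $\partial\Br$), and integrating by parts using (\ref{eq:free_boundary_condition}) together with the Gauss equation in $\SS^n$ (e.g.\ $\sum_a \langle A(e_a, \nabla \phi), A(e_a, \nabla \psi)\rangle = \mathrm{Ric}^\Si(\nabla\phi, \nabla\psi) + (k-1)\langle \nabla\phi, \nabla\psi\rangle$ for the minimal $\Si$), I expect $M_{ij}$ to collapse into the codimension-one-type expression $M_{ij} = -\int_\Si |A|^2 \phi_i \phi_j\,\dvg$, up to a manifestly non-positive boundary contribution arising from the $\phi_0$-factor (whose mixed boundary condition $\partial_\nu \phi_0 = -\tan r\,\phi_0$ produces a $-\frac{2}{\sin 2r}\int_\bS (\cdot)\,\dvl$ term that preserves negative-definiteness).

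To conclude, one shows $M$ is negative definite. The hypothesis that $\Si$ is not contained in any hyperplane through the origin implies both that $\phi_1, \ldots, \phi_n$ are linearly independent on $\Si$ and that $\Si$ is not totally geodesic in $\SS^n$, so $\{|A|^2 > 0\}$ is a nonempty open subset of $\Si$. If $\sum_i c_i |A| \phi_i \equiv 0$ in $L^2(\Si)$, then $\sum_i c_i \phi_i$ vanishes on $\{|A|^2 > 0\}$; by unique continuation for the elliptic equation $\Delta u + k u = 0$, this upgrades to $\sum_i c_i \phi_i \equiv 0$ on $\Si$, placing $\Si$ inside the hyperplane $\{\sum_i c_i x_i = 0\}$ through the origin --- a contradiction unless $c = 0$. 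Hence $M$ is negative definite on $\mathbb{R}^n$, so $Q$ is negative definite on $\mathrm{span}(W_1, \ldots, W_n)$, giving $\mathrm{index}(\Si) \geq n$. The main obstacle is the collapse of $M_{ij}$ in Step 2: unlike in codimension one, the normal connection $\nabla^\perp$ couples different normal directions and the Simons-type term $|A^{W_i}|^2$ no longer reduces trivially. Verifying the clean codimension-one form requires careful Gauss--Codazzi bookkeeping and exploits the fact that $X_{0i}$ is a genuine Killing field (not merely conformal), so that contributions from its flow vanish cleanly under integration by parts.
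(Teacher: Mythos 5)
Your test fields $W_i=(X_{0i})^\perp$ are the normal components of genuine Killing fields of $\mathbb{S}^n$, so by Simons' lemma they are Jacobi fields: $L_\Sigma W_i=0$. Consequently the interior term $-\int_\Sigma\langle L_\Sigma W_i,W_j\rangle\,\dvg$ in the second variation vanishes identically and $Q(W_i,W_j)$ reduces to a \emph{pure boundary integral}; the interior expression $-\int_\Sigma |A|^2\phi_i\phi_j\,\dvg$ you expect to appear cannot arise from these fields. Worse, the sign of that boundary integral is indefinite: carrying out the paper's boundary computation with these fields gives
\begin{equation*}
\mathcal{I}(W_i,W_i)=-\cot r\int_{\partial\Sigma}\Big(1-\tfrac{k}{\sin^2 r}\,\phi_i^2\Big)\,\dvl ,
\end{equation*}
and since $\phi_i^2$ can be as large as $\sin^2 r$ on $\partial\Sigma$, the integrand can change sign (indeed $\sum_{i=1}^n\mathcal{I}(W_i,W_i)=-\cot r\,(n-k)\,|\partial\Sigma|$, which at best traces to a single negative direction, not $n$). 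So the "main obstacle" you flag at the collapse step is not a bookkeeping issue --- the collapse genuinely does not happen, and negative definiteness fails for the uncorrected Killing fields.

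The idea you are missing, and the heart of the paper's proof, is to perturb the Killing normal component by a multiple of $\partial_y^\perp$: the paper takes $V_y=\phi_y\,\partial_0^\perp-\phi_0\,\partial_y^\perp+c(r)\,\partial_y^\perp$ with the constant $c(r)=\frac{1+\sin r}{\cos r}$ chosen precisely so that all boundary contributions cancel, leaving the strictly negative interior term $\mathcal{I}(V_y,V_y)=-k\,c(r)^2\int_\Sigma\|\partial_y^\perp\|^2\,\dvg$ coming from $L_\Sigma(\partial_y^\perp)=k\,\partial_y^\perp$. The non-degeneracy step is then also different from yours: one must show $y\mapsto V_y$ is injective, which the paper does via an Obata-type rigidity argument on $\partial\Sigma$ (this is where the hypothesis $r\neq\pi/2$ enters --- a point absent from your plan) followed by unique continuation. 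Your final linear-independence argument via $|A|\sum c_i\phi_i\equiv 0$ is reasonable in spirit but is attached to a formula that does not hold.
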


The paper is organized as follows. In Section \ref{sec: index estimate}, we prove \textbf{Theorem C}, which will be instrumental for the proof of \textbf{Theorem A} in Section \ref{sec:uniqueness}. The ODE analysis leading to the proof of \textbf{Theorem B} is performed in Section \ref{sec:ODE}. \\

\noindent \textbf{Acknowledgments:}  This work was developed as part of my thesis at IMPA under the supervision of Lucas Ambrozio, to whom I am deeply grateful for his guidance and support. I also thank Vanderson Lima for hosting my visit to UFRGS, where part of this research was conducted.  Also thanks to my colleagues Carlos Toro and Ivan Miranda for their interest and valuable discussions about this work. This research was supported by CAPES - Coordenação de Aperfeiçoamento de Pessoal de Nível Superior.

\section{Index estimatives}\label{sec: index estimate}

Let $\mathbb{B}^n(r)\subset \mathbb{S}^n$ be a spherical cap with center in $e_0=(1,0,\ldots,0)$ and radius $0<r<\pi/2$. Let \(\phi: (\Sigma^k, g) \to \mathbb{B}^n(r)\) be a free boundary minimal immersion. Our goal in this section is to show that, if the immersion is not totally geodesic, then there exists a $n$-dimensional space of normal deformations of $\phi$ that decrease area to second order.
    
    The index form associated with the second variation of the area functional is given by
\begin{equation}\label{eq:index_form}
\begin{split}
        \mathcal{I}(V, W)&= \int_\Sigma \langle \overline{\nabla} V,\overline{
        \nabla} W\rangle -(k+|B|^2)\langle V,W\rangle \dvg+\int_{\partial\Sigma}\langle \overline{\nabla}_V W, \nu \rangle \dvl, \\
        &= -\int_\Sigma \langle L_\Sigma V, W \rangle \, \dvg+ \int_{\partial\Sigma} \left( \langle \overline{\nabla}_\nu V, W \rangle + \langle \overline{\nabla}_V W, \nu \rangle \right) \dvl,
\end{split}
\end{equation}
where \(V, W \in \mathfrak{X}^\perp(\Sigma)\). In the above formulas, $\overline{\nabla}$ denotes the connection of $\mathbb{S}^n$, \(B(X,Y)=(\overline{\nabla}_{X}Y)^{\perp}\) denotes the second fundamental form of \(\Sigma\) in $\mathbb{S}^n$ on \(X, Y \in \mathfrak{X}(\Sigma)\), \(L_\Sigma = \Delta^\perp + (k+|B|^2)\) is the Jacobi operator of $\Sigma$, and $\nu$ is the outward-pointing unit conormal of $\partial \Sigma$ in $\Sigma$.

Let \( y \in \mathbb{R}^{n+1} \) be a fixed vector. Consider the tangent vector field on \( \mathbb{S}^n \) defined by
\[
\partial_y(p) = \operatorname{pr}_{T_p\mathbb{S}^n}(y) = y - \langle p, y \rangle p,
\]
where \(\operatorname{pr}_{T_p\mathbb{S}^n}\) denotes the orthogonal projection onto the tangent space \(T_p\mathbb{S}^n\). Let \(\partial_y^\perp\) denote the component of \(\partial_y\) normal to \(T_p\Sigma\) and \(\partial_y^\top\) denote the component of \(\partial_y\) tangent to \(T_p\Sigma\). We also define the function \(\phi_y(p) = \langle p, y \rangle\) on \(\Sigma\).

We then have the following useful identities:
\begin{equation}\label{eq: propriedades of u_y}
    \begin{split}
        \overline{\nabla} \phi_y &= \partial_y, \\
        \nabla \phi_y &= \partial_y^{\top}, \\
        \overline{\nabla}_X \partial_y &= -\phi_y X, \\
        \Delta_\Sigma \phi_y &= -k \phi_y,\\
        \langle \partial_y,\partial_x\rangle&= \langle y,x\rangle-\phi_y \phi_x,
\end{split}
\end{equation}
where \(X\) is a vector field in \(\Sn\) and   \(\overline{\nabla}\) denotes the gradient with respect to the ambient metric on \(\mathbb{S}^n\), and \(\nabla\) is the gradient with respect to the intrinsic metric \(g\) on \(\Sigma\). Notice that the fourth equation holds because the immersion of $(\Sigma,g)$ is minimal.

Let \( y \in \mathbb{R}^{n+1} \) be a vector orthogonal to \( e_0 \). Consider the normal vector field on $\Sigma$ defined by
\begin{equation}\label{eq:variation_field}
   V_y = \phi_y \, \partial_0^\perp - \phi_0 \, \partial_y^\perp + \frac{1 + \sin r}{\cos r}\, \partial_y^\perp \in \mathfrak{X}^\perp(\Sigma),
\end{equation}
where, for simplicity, we write \( \phi_0 = \phi_{e_0} \), \( \partial_0 = \partial_{e_0} \). 

Let \( V_1 = \phi_y\, \partial_0^\perp - \phi_0\, \partial_y^\perp \) be the normal component on $\Sigma$ of the  Killing vector field $K(p)=\langle y,p\rangle e_0-\langle e_0,p\rangle y$ on $\mathbb{S}^n$. If we define the constant \( c(r) = \frac{1 + \sin r}{\cos r} \), then we can write
\[
V_y(p) = V_1 + c(r)\, \partial_y^\perp,
\]

Observe that
\begin{equation}\label{boundary value of Vy}
V_y|_{\partial\Sigma} = \big(-\phi_0(p) + c(r)\big) \partial_y^\perp = \frac{\sin r(1+\sin r)}{\cos r} \partial_y^\perp ,
\end{equation}
because the free boundary condition with respect to $\mathbb{B}^n(r)$ gives us \( \partial_0^\perp|_{\partial\Sigma} = 0 \), and we also have \( \phi_0(p) = \cos r \) for \( p \in \partial\Sigma \).

\begin{lema}\label{Lemma: dimesion of the V}
The set of $\mathcal{V}=\{V_y\in \mathcal{X}^{\perp}(\Sigma)\,:\,\langle y,e_0\rangle = 0\}$ is real vector space of dimensions $n$, unless $\Sigma$ is contained in a hyperplane that contains $e_0$.
\end{lema}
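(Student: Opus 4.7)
The mapping $y \mapsto V_y$ is linear on the $n$-dimensional domain $\{y \in \mathbb{R}^{n+1} : \langle y, e_0\rangle = 0\}$, so the assertion reduces to showing that its kernel is trivial whenever $\Sigma$ is not contained in a hyperplane through $e_0$. I will prove the contrapositive: if $V_y \equiv 0$ for some nonzero $y\perp e_0$, then $\Sigma$ lies in such a hyperplane.

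The plan is to exploit the decomposition $V_y = V_1 + c(r)\,\partial_y^\perp$, where $V_1 := \phi_y\partial_0^\perp - \phi_0\partial_y^\perp$ is the normal projection to $\Sigma$ of the ambient Killing field $K := \phi_y e_0 - \phi_0 y$ of $\mathbb{S}^n$. Since $K$ is Killing and $\Sigma$ is minimal, $V_1$ is automatically a Jacobi field, so $L_\Sigma V_1 = 0$; combining with $V_y \equiv 0$ yields $L_\Sigma\,\partial_y^\perp \equiv 0$. A direct computation using \eqref{eq: propriedades of u_y}, Codazzi's equation, and the minimality $H = 0$ then gives $\Delta^\perp\partial_y^\perp = -\mathfrak{B}(\partial_y^\perp)$, where $\mathfrak{B}(N) := \sum_{i,j}\langle N, B(e_i,e_j)\rangle B(e_i,e_j)$ is Simons' positive semi-definite symmetric operator on $N\Sigma$ with $\mathrm{tr}\,\mathfrak{B} = |B|^2$. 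Hence $L_\Sigma\partial_y^\perp = 0$ becomes the pointwise eigenvalue relation $\mathfrak{B}(\partial_y^\perp) = (k+|B|^2)\,\partial_y^\perp$; since every eigenvalue of $\mathfrak{B}$ is at most $|B|^2 < k+|B|^2$, this forces $\partial_y^\perp \equiv 0$ on $\Sigma$.

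Plugging this back into $V_y \equiv 0$ reduces it to $\phi_y\partial_0^\perp \equiv 0$. By unique continuation the nodal set of the eigenfunction $\phi_y$ has empty interior, so either $\phi_y \equiv 0$ — in which case $\Sigma \subset \{\langle \cdot, y\rangle = 0\}$ is a hyperplane through $e_0$, and we are done — or $\partial_0^\perp \equiv 0$ on $\Sigma$. In the latter case both $\partial_0$ and $\partial_y$ are tangent to $\Sigma$ at every point, so $T_p\Sigma \subseteq \mathrm{span}\{e_0, y, p\} \cap p^\perp$ and a curve $\gamma(t) \in \Sigma$ satisfies the ODE $\gamma'(t) = a(t) e_0 + b(t) y + c(t)\gamma(t)$, whose solution stays in the linear span of $\{e_0, y, \gamma(0)\}$; this confines $\Sigma$ to a proper linear subspace of $\mathbb{R}^{n+1}$ containing $e_0$, hence to a hyperplane through $e_0$.

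The main obstacle is the final step, namely ruling out the situation $\partial_0^\perp \equiv 0 \equiv \partial_y^\perp$ with $\phi_y\not\equiv 0$. The ODE argument above is watertight when $T_p\Sigma = \mathrm{span}(\partial_0,\partial_y)$ at generic $p$, which is automatic when $\dim\Sigma = 2$; in higher dimensions one must supplement it with the boundary identity $\langle K,\nu\rangle = -\phi_y/\sin r$ on $\partial\Sigma$, which together with the Robin condition $\partial\phi_y/\partial\nu = \cot r\,\phi_y$ and the tangency of $K$ to $\Sigma$ forces $\phi_y \equiv 0$ on $\partial\Sigma$, and hence on all of $\Sigma$ by unique continuation for $\Delta_\Sigma + k$.
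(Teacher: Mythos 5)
Your reduction to the triviality of the kernel and your derivation of $\partial_y^\perp\equiv 0$ on all of $\Sigma$ are correct, and this first half takes a genuinely different (and in some respects stronger) route than the paper: the paper only extracts $\partial_y^\perp|_{\partial\Sigma}=0$ from the boundary value \eqref{boundary value of Vy} of $V_y$, whereas you use $L_\Sigma V_1=0$ together with the Simons-type identity $\Delta^\perp\partial_y^\perp=-\mathfrak{B}(\partial_y^\perp)$ (equivalently $L_\Sigma\partial_y^\perp=k\,\partial_y^\perp$, which is exactly what underlies \eqref{eq:Jacobi_operator_on_V_y}) to kill $\partial_y^\perp$ in the whole interior. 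That computation is sound.

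The genuine gap is in the final case $\phi_y\not\equiv 0$, $\partial_0^\perp\equiv 0\equiv\partial_y^\perp$. Your ODE confinement argument requires $T_p\Sigma=\operatorname{span}(\partial_0,\partial_y)$, so it only covers $k=2$ (and even there it places $\Sigma$ in the three-dimensional subspace $\operatorname{span}(e_0,y,p_0)$, which lies in a hyperplane only when $n\ge 3$). For $k\ge 3$ the supplement you propose is vacuous: both identities you invoke hold automatically for \emph{every} free boundary minimal immersion --- indeed $\partial_\nu\phi_y=\langle\partial_y,\nu\rangle=-\tfrac{1}{\sin r}\langle\partial_y,\partial_0\rangle=\cot r\,\phi_y$ by \eqref{eq: propriedades of u_y}, so the Robin condition carries no new information, and the tangency of $K$ to $\Sigma$ is perfectly compatible with $\langle K,\nu\rangle=-\phi_y/\sin r\neq 0$ because $\nu$ is itself tangent to $\Sigma$. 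Hence no contradiction is produced and $\phi_y|_{\partial\Sigma}=0$ does not follow. The paper closes this branch by a rigidity argument you do not have: from $\partial_y^\perp|_{\partial\Sigma}=0$ it computes $\operatorname{Hess}_{\partial\Sigma}\phi_y=-\phi_y\,g$ on the closed manifold $\partial\Sigma$ and invokes Obata's theorem to exclude $\phi_y|_{\partial\Sigma}\not\equiv 0$ when $r\neq\pi/2$, after which unique continuation (the Cauchy data $\phi_y=\partial_\nu\phi_y=0$ on $\partial\Sigma$) forces $\Sigma\subset y^{\perp}\ni e_0$. You need some argument of this type, exploiting the geometry of $\partial\Sigma\subset\partial\mathbb{B}^n(r)$ and the hypothesis $r\neq\pi/2$, to dispose of the branch $\phi_y\not\equiv 0$; as written, your proof does not establish the lemma for $k\ge 3$.
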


\begin{proof}
The map $y\in span\{e_0\}^{\perp} \mapsto V_y\in \mathcal{X}^{\perp}(\Sigma)$ is linear. If its kernel is non-trivial, then the vector field $\phi_y\partial_0-\phi_0\partial_y+c(r)\partial_y$ is tangent to $\Sigma$, for some $y \neq 0$. By the free boundary condition, it follows that \( \partial_y^\perp|_{\partial \Sigma} = 0 \). Moreover, note that, for every $X,Y\in \mathcal{X}(\partial \Sigma)$,
\begin{align*}
   \operatorname{Hess}_{\partial \Sigma} \phi_y (X,Y) & = \operatorname{Hess}_{\mathbb{S}^n}\phi_y(X,Y) + \left\langle \overline{\nabla}\phi_y, B^{\partial \Sigma}(X,Y) \right\rangle \\
        & = -\phi_y g(X,Y) + \left\langle \partial_y^{\perp}, B^{\partial \Sigma}(X,Y) \right\rangle \\
        & = -\phi_yg(X, Y),
\end{align*}
where $B^{\partial\Sigma}$ is the second fundamental form of $\partial \Sigma$ in $\mathbb{S}^n$.
    
If $\phi_y$ does not vanish, then, by Obata's theorem \cite{Obata}, it follows from the above identity that each connected component of \( \partial \Sigma \) is isometric to a Euclidean sphere in $\partial \mathbb{B}^n(r)$ of radius one. This can only happen when then $r=\pi/2$, which is excluded from our considerations. Thus, $\phi_y$ vanishes on $\partial \Sigma$, that is, $\partial \Sigma$ is contained in the hyperplane orthogonal to $y$.  By the unique continuation principle  we conclude that the free boundary minimal submanifold \( \Sigma \) itself is contained in the hyperplane orthogonal to $y$.
\end{proof}

We proceed to compute the index form \eqref{eq:index_form} on the normal vector fields $V_y$.
Since \( K \) is Killing vector field, we have \( L_\Sigma V_1 = 0 \) (by Lemma 5.1.7 of \cite{Simons68}), so the Jacobi operator acts as:
\[
L_\Sigma V_y = c(r)\, L_\Sigma(\partial_y^\perp).
\]
Using Lemma 5.1.3 of \cite{Simons68}, we obtain:
\begin{equation}\label{eq:Jacobi_operator_on_V_y}
    L_\Sigma V_y = k\, c(r)\, \partial_y^\perp.
\end{equation}

 Using the equations in \eqref{eq: propriedades of u_y}, obeserve that
 \[ 
 \langle \partial_y^\perp,\partial_y^\perp\rangle=\langle \partial_y,\partial_y\rangle-\langle\partial_y^\top,\partial_y^\top\rangle=1-\phi_y^2-\langle\nabla \phi_y\,\nabla\phi_y\rangle,
 \]
\[ 
\langle \partial_y^\perp,\partial_0^\perp\rangle=\langle \partial_y,\partial_0\rangle-\langle\partial_y^\top,\partial_0^\top\rangle=-\phi_y\phi_0-\langle\nabla \phi_y\,\nabla\phi_0\rangle.
 \]
Now we can compute
\begin{equation}\label{eq: Jacobi interior}
\begin{split}
\langle L_\Sigma V_y, V_y \rangle 
&= \langle k\, c(r)\, \partial_y^\perp,\ \phi_y\, \partial_0^\perp - \phi_0\, \partial_y^\perp + c(r)\, \partial_y^\perp \rangle \\
&=k\, c(r)^2 \, \|\partial_y^\perp\|^2+ k\, c(r) \phi_y \langle \partial_y^\perp, \partial_0^\perp\rangle-k\, c(r) \phi_0 \langle \partial_y^\perp, \partial_y^\perp\rangle\\
&= k\, c(r)^2\, \|\partial_y^\perp\|^2 - k\, c(r)\, \phi_0 \\
&\quad + k\, c(r) \left( \phi_0\, \langle \nabla \phi_y, \nabla \phi_y \rangle - \phi_y\, \langle \nabla \phi_y, \nabla \phi_0 \rangle \right) \\
&= k\, c(r)^2\, \|\partial_y^\perp\|^2 + c(r)\, \Delta_\Sigma \phi_0 \\
&\quad + k\, c(r)\, \operatorname{div}_\Sigma\left( \phi_y\, \phi_0\, \nabla \phi_y - \phi_y^2\, \nabla \phi_0 \right).
\end{split}
\end{equation}

On the other hand, we can also compute the boundary terms in the index form \eqref{eq:index_form}. Since \( V_y|_{\partial\Sigma} \) is tangent to \( \partial\mathbb{B}^n(r) \), by \eqref{boundary value of Vy} we have
\begin{equation}\label{eqnablaVyVynu}
\langle \overline{\nabla}_{V_y} V_y, \nu \rangle = \langle B^{\partial\mathbb{B}^n}(V_y, V_y), \nu \rangle = -\cot r\, \|V_y\|^2 = -\frac{\sin r(1+\sin r)^2}{\cos r} \|\partial_y^\perp\|^2 ,
\end{equation}
where \( B^{\partial\mathbb{B}^n} \) the second fundamental form of \( \partial\mathbb{B}^n \) in \( \mathbb{S}^n \).

As \( \nu(p) = -\frac{1}{\sin r}\, \partial_0(p) \) for \( p \in \partial\Sigma \), and since \( \nu \) is tangent to \( \partial\Sigma \), we can take an orthonormal frame \( \{X_1, \ldots, X_k\} \) of \( T\Sigma \) such that \( X_k = \nu \) along \( \partial\Sigma \). Therefore, for \( i \neq k \),
\begin{equation*}\label{eq B(X,nu)=0}
    B(\nu, X_i) = B(X_i, \nu) = (\overline{\nabla}_{X_i} \nu)^\perp = \left( \frac{\phi_0}{\sin r} X_i \right)^\perp = 0,
\end{equation*}
where we used the third equation of \eqref{eq: propriedades of u_y}. It follows that
\begin{align*}
\begin{split}
    \langle \overline{\nabla}_\nu \partial_y^\perp, \partial_y^\perp \rangle 
    &= \langle \overline{\nabla}_\nu (\partial_y - \partial_y^\top), \partial_y^\perp \rangle \\
    &= \langle -\phi_y\nu - \sum_{i=1}^k \overline{\nabla}_\nu (\langle \partial_y, X_i \rangle X_i), \partial_y^\perp \rangle \\
    &=- \sum_{i=1}^k  \langle \partial_y, X_i \rangle\left\langle  B(\nu,X_i), \partial_y^\perp \right\rangle\\
    &= - \left\langle \partial_y, \nu \right\rangle \langle B(\nu, \nu),\partial_y^\perp \rangle \\
    &= -\cot{r}\cdot \phi_y  \langle B(\nu, \nu),\partial_y^\perp\rangle. \\
\end{split}
\end{align*}

By a similar computation, we obtain $\langle \overline{\nabla}_\nu \partial_0^\perp,\partial_y^\perp\rangle=\sin r \langle B(\nu,\nu),\partial_y^{\perp}\rangle$. We can then compute 
\begin{align}\label{eqnablanuVyVy}
\begin{split}
    \langle \overline{\nabla}_\nu V_y,V_y\rangle&=\langle \overline{\nabla}_\nu (\phi_y \partial_0^\perp+(c(r)-\phi_0)\partial_y^\perp, (c(r)-\cos{r})\partial_y^\perp\rangle \\
    &=\langle \phi_y \overline{\nabla}_\nu \partial_0^\perp-\frac{\partial\phi_0}{\partial\nu}\partial_y^\perp + (c(r)-\cos r)\overline{\nabla}_\nu \partial_y^\perp,(c(r)-\cos{r})\partial_y^\perp\rangle \\
    &= ( c(r)-\cos{r}) \Big( \sin{r}\cdot\phi_y \langle B(\nu,\nu),\partial_y^\perp\rangle+ \sin{r} ||\partial_y^\perp||^2\Big)\\ 
    & \quad -(c(r)-\cos{r})^2\cot{r}\cdot\phi_y \langle B(\nu,\nu) ,\partial_y^\perp\rangle \\
    &= \frac{\sin{r}(1+\sin{r})}{\cos{r}}\left(\sin r- \frac{\sin{r}(1+\sin r)}{\cos{r}}\cot r \right)\phi_y \langle B(\nu,\nu),\partial_y^\perp\rangle \\
    & \quad+\sin r\frac{\sin{r}(1+\sin{r})}{\cos{r}}||\partial_y^\perp||^2 \\
    &= -\frac{\sin{r}(1+\sin{r})}{\cos{r}}\phi_y \langle B(\nu,\nu),\partial_y^\perp\rangle+\frac{\sin^2{r}(1+\sin{r})}{\cos{r}}||\partial_y^\perp||^2.
\end{split}
\end{align}

Consider the vector field $\xi=\partial_y-\langle\partial_y,\nu\rangle\nu=\partial_y-\cot{r}\cdot \phi_y \nu $, which is tangent to $\partial B^n$ on points of $\partial \Sigma$. Observe that
\begin{equation}\label{eq: divente bordo I}
\begin{split}
        \text{div}_{\partial\Sigma} \xi&=\sum_{i=1}^{k-1} \langle \overline{\nabla}_{X_i} \xi,X_i\rangle  \\
        &=\sum_{i=1}^{k-1} \langle -\phi_y X_i- \cot{r}\cdot \phi_y \frac{\phi_0}{\sin{r}}X_i,X_i\rangle \\
        &=-(k-1) (1+ \cot^2{r})\phi_y=-\frac{(k-1)}{\sin^2{r}}\phi_y
\end{split}
\end{equation}
On the other hand, since $\xi^{\perp} = \partial_y^\perp$ on $\partial \Sigma$, we have
\begin{equation}\label{eq: divente bordo II}
\begin{split}
        \text{div}_{\partial\Sigma} \xi&=  \text{div}_{\partial\Sigma} \xi^{T\partial\Sigma}+\text{div}_{\partial\Sigma} \partial_y^\perp\\
        &=  \text{div}_{\partial\Sigma} \xi^{T\partial\Sigma}+\sum_{i=1}^{k-1}\langle\nabla_{X_i} \partial_y^\perp,X_i\rangle\\
         &=  \text{div}_{\partial\Sigma} \xi^{T\partial\Sigma}-\sum_{i=1}^{k-1}\langle\partial_y^\perp,B(X_i,X_i)\rangle\\
          &=  \text{div}_{\partial\Sigma} \xi^{T\partial\Sigma}+\langle\partial_y^\perp,B(\nu,\nu)\rangle,
\end{split}
\end{equation}
where in the last line we used that the immersion is minimal. Then, equations (\ref{eq: divente bordo I}) and (\ref{eq: divente bordo II}) give us that
\begin{equation}\label{eq: divergente bordo III}
    \phi_y \langle B(\nu,\nu),\partial_y^\perp\rangle=\|\partial_y^{\top\partial\Sigma}\|^2-\frac{(k-1)}{\sin^2{r}}\phi_y^2-\text{div}_{\partial\Sigma} (\phi_y \xi^{T\partial\Sigma}).
\end{equation}

Therefore, the boundary terms in the index form \eqref{eq:index_form} is given by 

\begin{equation}\label{eq:Bnd_terms_index_form_final_version}
\begin{split}
    \langle \overline{\nabla}_\nu V_y, V_y \rangle 
    + \langle \overline{\nabla}_{V_y} V_y, \nu \rangle
    &= -\frac{\sin r (1+\sin r)}{\cos r} \, \phi_y \langle B(\nu,\nu),\partial_y^\perp\rangle \\
    & \quad + \frac{\sin^2 r (1+\sin r)}{\cos r} \, \| \partial_y^\perp \|^2  \\
    &\quad - \frac{\sin r (1+\sin r)^2}{\cos r} \, \| \partial_y^\perp \|^2 \\
    &= -\frac{\sin r (1+\sin r)}{\cos r} \Bigg(
        -\frac{(k-1)}{\sin^2 r} \, \phi_y^2 
        + \| \partial_y^{\top \partial\Sigma} \|^2 
         \\
    &\qquad + \| \partial_y^\perp \|^2 - \operatorname{div}_{\partial\Sigma} (\phi_y \, \xi^{T\partial\Sigma}) 
    \Bigg) \\
    &= -\frac{\sin r (1+\sin r)}{\cos r} \Bigg(
        -\frac{(k-1)}{\sin^2 r} \, \phi_y^2 
         + \|\partial_y\|^2  \\
    &\qquad-\langle \partial_y,\nu\rangle^2 - \operatorname{div}_{\partial\Sigma} (\phi_y \, \xi^{T\partial\Sigma}) 
    \Bigg)\\
    &= -\frac{\sin r (1+\sin r)}{\cos r} \Bigg(
        -\frac{k}{\sin^2 r} \, \phi_y^2 + 1
        \\
    &\qquad - \operatorname{div}_{\partial\Sigma} (\phi_y \, \xi^{T\partial\Sigma}). 
     \Bigg)
\end{split}
\end{equation}

We then obtain the following result, which corresponds to \textbf{Theorem C}.

\begin{thm}\label{thm:index_estimate}
Let \( \Phi: \Sigma^k \to \mathbb{B}^n(r) \) be a free boundary minimal immersion, and let \( y \in \mathbb{R}^n \) be such that \( y \perp e_0 \). Then, for $V_y\in \mathcal{X}^{\perp}(\Sigma)$ as defined in \eqref{eq:variation_field},
\begin{equation}\label{eq:index_estimate}
    \mathcal{I}(V_y, V_y) = -k \left( \frac{1 + \sin r}{\cos r} \right)^2 \int_{\Sigma} \|\partial_y^\perp\|^2 \, \dvg.
\end{equation}
Moreover, if \( \Sigma \) is not contained in a hyperplane passing through the origin, then the Morse index of \( \Sigma \) is at least \( n \).
\end{thm}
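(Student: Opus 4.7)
Since the two heavy pointwise computations \eqref{eq: Jacobi interior} and \eqref{eq:Bnd_terms_index_form_final_version} are already in place, the plan for the identity is straightforward assembly. I will plug $\langle L_\Sigma V_y,V_y\rangle$ into the interior integral of \eqref{eq:index_form} and plug $\langle \overline{\nabla}_\nu V_y, V_y\rangle + \langle \overline{\nabla}_{V_y}V_y,\nu\rangle$ into the boundary integral. The two auxiliary pieces inside $\Sigma$, namely $c(r)\Delta_\Sigma\phi_0$ and $kc(r)\operatorname{div}_\Sigma(\phi_y\phi_0\nabla\phi_y-\phi_y^2\nabla\phi_0)$, reduce to $\partial\Sigma$-integrals by Stokes, and I will then rewrite the normal derivatives using the free boundary conditions $\frac{\partial\phi_0}{\partial\nu}=-\sin r$ and $\frac{\partial\phi_y}{\partial\nu}=(\cot r)\phi_y$ together with $\phi_0|_{\partial\Sigma}=\cos r$, as in \eqref{eq:free_boundary_condition}. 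On the boundary side, the term $\operatorname{div}_{\partial\Sigma}(\phi_y\,\xi^{T\partial\Sigma})$ integrates to zero since $\partial\Sigma$ is closed. Careful bookkeeping should yield exact cancellation of the two $|\partial\Sigma|$-contributions and of the two $\int_{\partial\Sigma}\phi_y^2\,\text{d}a_g$-contributions that appear, leaving precisely $-kc(r)^2\int_\Sigma \|\partial_y^\perp\|^2\,\text{d}v_g$ as stated in \eqref{eq:index_estimate}.

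For the Morse index bound I would argue in two steps. The hypothesis that $\Sigma$ is not contained in any hyperplane through the origin is strictly stronger than the hypothesis of \textbf{Lemma \ref{Lemma: dimesion of the V}}, so that lemma provides an $n$-dimensional subspace $\mathcal{V} = \{V_y : y\perp e_0\}\subset\mathfrak{X}^\perp(\Sigma)$. Polarizing \eqref{eq:index_estimate} shows that the pulled-back bilinear form $(y,y')\mapsto \mathcal{I}(V_y,V_{y'})=-kc(r)^2\int_\Sigma\langle\partial_y^\perp,\partial_{y'}^\perp\rangle\,\text{d}v_g$ on $\{e_0\}^\perp\cong\mathbb{R}^n$ is negative semidefinite. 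To conclude Morse index at least $n$, I must verify that this form is non-degenerate, i.e.\ that its kernel is trivial.

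The non-degeneracy is the delicate step and the main obstacle. A diagonal zero $\mathcal{I}(V_y,V_y)=0$ forces $\partial_y^\perp\equiv 0$ on all of $\Sigma$, and in particular on $\partial\Sigma$. I would then replay the argument in the proof of \textbf{Lemma \ref{Lemma: dimesion of the V}}: the Hessian identity along $\partial\Sigma$ reduces to $\operatorname{Hess}_{\partial\Sigma}\phi_y = -\phi_y\, g$, so if $\phi_y\not\equiv 0$ on $\partial\Sigma$ then Obata's theorem forces each boundary component to be isometric to a unit Euclidean sphere, which since $\partial\Sigma\subset\partial\mathbb{B}^n(r)$ can only happen at $r=\pi/2$, excluded by hypothesis. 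Hence $\phi_y\equiv 0$ on $\partial\Sigma$; combined with $\frac{\partial\phi_y}{\partial\nu}=(\cot r)\phi_y=0$ on $\partial\Sigma$, this provides vanishing Cauchy data for the elliptic equation $\Delta_\Sigma\phi_y + k\phi_y=0$, and unique continuation propagates $\phi_y\equiv 0$ to all of $\Sigma$. Thus $\Sigma$ lies in the hyperplane $\{\langle\,\cdot\,,y\rangle=0\}$ through the origin, contradicting the hypothesis. This shows $y=0$, so the bilinear form is in fact negative definite on the $n$-dimensional space $\mathcal{V}$, and the Morse index of $\Sigma$ is at least $n$.
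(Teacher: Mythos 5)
Your proposal is correct and follows essentially the same route as the paper: the identity is obtained by combining \eqref{eq: Jacobi interior} and \eqref{eq:Bnd_terms_index_form_final_version}, integrating the divergence terms by parts, and cancelling the boundary contributions via the free boundary conditions, while the index bound rests on \textbf{Lemma \ref{Lemma: dimesion of the V}}. The one place where you go beyond the paper is the non-degeneracy step: the paper simply asserts that $\mathcal{I}(V_y,V_y)<0$ for $V_y\neq 0$, whereas you correctly observe that \eqref{eq:index_estimate} a priori only gives $\mathcal{I}(V_y,V_y)\le 0$ and close the gap by showing that $\partial_y^\perp\equiv 0$ would force $\Sigma$ into the hyperplane orthogonal to $y$ via the same Obata/unique-continuation argument used in \textbf{Lemma \ref{Lemma: dimesion of the V}}.
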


\begin{proof}
    Using \eqref{eq: Jacobi interior} and \eqref{eq:Bnd_terms_index_form_final_version}, and integrating by parts, we have
    \begin{align*}
        \mathcal{I}(V_y, V_y) &= -k c(r)^2\int_\Sigma ||\partial_y^\perp||^2\dvg\\
        &\quad - \int_\Sigma c(r)\Big( k \operatorname{div}_\Sigma( \phi_y\, \phi_0\, \nabla \phi_y - \phi_y^2\, \nabla \phi_0)
        + \Delta_g \phi_0 \Big)\, \dvg\\
        &\quad -\frac{\sin r (1 + \sin r)}{\cos r} \int_{\partial\Sigma} \Bigg(1-\frac{k}{\sin^2 r}\phi_y^2 +
        \operatorname{div}_{\partial\Sigma} (\phi_y\, \xi^{T\partial\Sigma}) \Bigg) \,\dvl\\
        &= -k c(r)^2 \int_\Sigma ||\partial_y^\perp||^2 \, \dvg\\
        &\quad+ \int_{\partial\Sigma} \left( - k c(r) \phi_y \phi_0 \frac{\partial \phi_y}{\partial \nu} + k c(r) \phi_y^2 \frac{\partial \phi_0}{\partial \nu} - c(r)\frac{\partial \phi_0}{\partial \nu} \right) \,\dvl \\
        &\quad -\frac{\sin r (1 + \sin r)}{\cos r} \int_{\partial\Sigma} \left(1- \frac{k}{\sin^2 r} \phi_y^2  \right) \,\dvl \\
        &= -k c(r)^2 \int_\Sigma ||\partial_y^\perp||^2 \, \dvg \\
        &\quad+ \int_{\partial\Sigma} \left(  - k \frac{(1 + \sin r)\cos r}{\sin r} \phi_y^2 -k \frac{(1 + \sin r)\sin r}{\cos r} \phi_y^2 \right) \,\dvl \\
        &\quad + \int_{\partial\Sigma} \frac{\sin r (1 + \sin r)}{\cos r} \,\dvl\\
        &\quad+ \int_{\partial\Sigma} \frac{\sin r (1 + \sin r)}{\cos r} \left(\frac{k}{\sin^2 r } \phi_y^2 -1\right) \,\dvl \\
        &= -k c(r)^2 \int_\Sigma ||\partial_y^\perp||^2 \, \dvg.
    \end{align*}

    Finally, by \textbf{Lemma \ref{Lemma: dimesion of the V}}, we know that if \( \Sigma \) is not contained in a hyperplane passing through the origin, then the dimension of the space of normal sections generated by \( V_y \) is equal to \( n \). Since we proved that $I(V_y,V_y)<0$ for all $V_y\neq 0$, the estimate on the Morse index follows.
\end{proof}

\begin{rmk}
If $\Sigma$ lies  in a hyperplane through the origin, then its index equals $n-k$.
\end{rmk}

\section{Uniqueness of free boundary minimal Mobius Band in Spherical Caps}\label{sec:uniqueness}
    
We specialize the discussion of the previous Section to the case of surfaces.
Let $(\Sigma,g)$ be a compact  surface with boundary. Suppose that we have a free boundary minimal immersion $\Phi: \Sigma \to \mathbb{B}^n(r)$. We can decompose the tangent bundle as $\Phi^* T\mathbb{S}^n=\Phi_* T\Sigma\bigoplus^\perp T^\perp\Sigma$ , where $T^\perp\Sigma$ is the normal bundle. As before, we denote by $\overline{\nabla}$ the connection on $\mathbb{S}^n$ and by $\nabla$ the connection induced by immersion. In this section, it will be convenient to use $\langle -,-\rangle$ to denote the canonical metric on $\mathbb{S}^n$.

We define the energy of the map $\Phi$ as
\begin{equation}\label{eq: enegi funtional}
    E_g(\Phi)= \dfrac{1}{2}\int_\Sigma |d \Phi|^2 \dvg=  \frac{1}{2}  \sum_{i=1} ^2 \int |\d\Phi_p(E_i)|^2 \dvg,
\end{equation}
where $\{E_i\}_{i=1,2}$ can be taken as an arbitrary (local) orthonormal tangent frame on $(\Sigma,g)$. Notice that $E_g$ is invariant under conformal changes $g\rightarrow \rho g$, for any positive smooth function $\rho$ on the surface $\Sigma$. The map $\Phi$ is called \textit{harmonic} if is is a critical point of $E_g$

Given $V,W\in \Gamma(\Phi^* T \mathbb{S}^n)$ tangent to $\partial \mathbb{B}^n(r)$, that is, such that $V(p), W(p)\in T_{\Phi(p)} \partial \mathbb{B}^n (r)$ for all $p\in \Sigma$,  consider the billinear form
\begin{align}\label{eq: second variantion of energi0}
 \begin{split}   Q(V,W) & =\int_\Sigma \langle\nabla V, \nabla W\rangle- \sum_{i=1}^{2} R^{\mathbb{S}^n}(V,E_i,W,E_i) \dvg-\cot{r} \int_{\partial\Sigma} \langle V, W\rangle \dvl. \\
 & = \int_\Sigma \langle\nabla V, \nabla W\rangle - 2\langle V,W\rangle + \langle V^\top,W^\top\rangle \dvg-\cot{r} \int_{\partial\Sigma} \langle V, W\rangle \dvl. 
\end{split}
\end{align}
Here, $R^{\mathbb{S}^n}(X,Y,Z,W)=\langle X,Z\rangle\langle Y,W\rangle - \langle X,W\rangle\langle Y,Z\rangle$ denoted the curvature tensor of $\mathbb{S}^n$.

{The billinear form \eqref{eq: second variantion of energi0} is associated with the second variation of the energy functional for the harmonic map $\Phi$ along admissible variations. In fact, let $\Phi_t$ be an admissible variation of $\Phi$, that is, a one-parameter family of immersions $\Phi_t:\Sigma\rightarrow \mathbb{B}^n(r)$ such that $\Phi_0=\Phi$ and $\Phi_t(p)\in \partial \mathbb{B}^n(r)$ for every $p\in \partial \Sigma$ and $-\epsilon<t<\epsilon$. In this case, the variational vector field  $\dot{\Phi}=\frac{d}{dt}_{t=0}  \Phi_s=V$ satisfies $V(p)\in T_{\Phi(p)}\partial \mathbb{B}^n(r)$ for every $p\in \partial \Sigma$, and we have
 \begin{equation}\label{eq: second variation of energi}
     \frac{1}{2} \frac{d^2}{d t^2} |_{t=0} E_g(\Phi_t)=Q(V,V).
 \end{equation}

We call a section $V\in \Gamma(\Phi^* T\mathbb{S}^n)$ if, for any $X,Y \in \Gamma(T\Sigma)$, there exists a function $\xi$ such that 
\begin{equation}\label{eq: conformal section}
   \langle \nabla_X V,Y\rangle+\langle X,\nabla_Y V \rangle = 2 \xi \langle X, Y\rangle.
\end{equation}

 the next lemma, we relate the values of the quadratic forms $Q$ and $I$ on conformal vector fields that are tangent to $\partial \mathbb{B}^n(r)$.
 
\begin{lema}\label{lemma: Relacao da variacao de energia e da area}
Let $\Phi: (\Sigma,g)\to \mathbb{B}^n(r)$  be a free boundary minimal immersion of a compact surface with boundary. Suppose that $\Phi_t$ is an admissible variation of $\Phi$ such that $\dot{\Phi}=V \in \Gamma(\Phi^* T\mathbb{S}^n)$ is conformal.  Then
\begin{equation*}
    Q(V,V)=\mathcal{I}(V^\perp,V^\perp). 
\end{equation*}
\end{lema}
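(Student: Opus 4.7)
The plan is a direct computation: decompose $V = V^\top + V^\perp$ and reduce the identity $Q(V,V)=\mathcal{I}(V^\perp,V^\perp)$ to a cancellation that uses the infinitesimal conformality of $V$.

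I first dispose of the purely normal part. Since $(V^\perp)^\top = 0$, the bulk integrand of $Q(V^\perp, V^\perp)$ simplifies to $|\overline{\nabla} V^\perp|^2 - 2|V^\perp|^2$. Moreover, by admissibility $V \perp \nu$ on $\partial\Sigma$, and since $\nu \in T\Sigma$, also $V^\perp$ is tangent to $\partial\Br$ there; the shape operator of $\partial\Br$ gives $\langle \overline{\nabla}_{V^\perp} V^\perp, \nu\rangle = -\cot r \, |V^\perp|^2$, matching the boundary term in $\mathcal{I}$ through \eqref{eq:index_form}. Comparing the two expressions then yields
\[
Q(V^\perp, V^\perp) - \mathcal{I}(V^\perp, V^\perp) = \int_\Sigma |B|^2 |V^\perp|^2 \, \dvg .
\]
The lemma thus reduces to showing
\[
Q(V^\top, V^\top) + 2\, Q(V^\top, V^\perp) = -\int_\Sigma |B|^2 |V^\perp|^2 \, \dvg,
\]
so that the tangential part exactly cancels the extra curvature integral.

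For this reduced identity I would expand $\overline{\nabla}_{E_i} V^\top = \nabla^\Sigma_{E_i} V^\top + B(E_i, V^\top)$ and $\overline{\nabla}_{E_i} V^\perp = -A_{V^\perp}(E_i) + \nabla^\perp_{E_i} V^\perp$ by Gauss and Weingarten, and then invoke the conformality condition
\[
\langle \nabla^\Sigma_X V^\top, Y\rangle + \langle X, \nabla^\Sigma_Y V^\top\rangle = 2\xi g(X,Y) + 2\langle B(X,Y), V^\perp\rangle,
\]
whose trace, together with $H = 0$, gives $\operatorname{div}_\Sigma V^\top = 2\xi$. On a surface, the identity $A^2 = \tfrac{1}{2}|A|^2 \operatorname{Id}$ for traceless symmetric operators converts $\sum_i |B(E_i, V^\top)|^2$ into $\tfrac{1}{2}|B|^2 |V^\top|^2$, while the symmetric-part relation forces $\sum_i \langle \nabla^\Sigma_{E_i} V^\top, A_{V^\perp}(E_i)\rangle = |A_{V^\perp}|^2$. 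The remaining cross term $\sum_i \langle B(E_i, V^\top), \nabla^\perp_{E_i} V^\perp\rangle$ is handled by integration by parts, and the boundary contributions that arise are combined with the $-\cot r \int_{\partial\Sigma} |V^\top|^2 \, \dvl$ discrepancy from $Q$ and evaluated via $\overline{\nabla}_X \nu = \cot r \cdot X$ on $\partial\Sigma$ (the shape operator of $\partial\Br$ in $\Sn$) applied to the admissibility $\langle V^\top, \nu\rangle = 0$.

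The main obstacle is the algebraic bookkeeping. The full tensorial conformality condition, not merely its trace, is needed to match several quadratic tangential terms against $-|B|^2 |V^\perp|^2$, and the two-dimensional hypothesis enters decisively through the traceless-operator identity $A^2 = \tfrac{1}{2}|A|^2 \operatorname{Id}$. The free boundary condition is used twice — as the vanishing $\langle V^\top, \nu\rangle = 0$ and as the shape-operator relation $\overline{\nabla}_X \nu = \cot r \cdot X$ — and these two effects must combine consistently to produce exactly the $\cot r$-weighted boundary cancellation.
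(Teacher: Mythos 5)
Your route is genuinely different from the paper's, and the preliminary reduction is sound: since $V^\perp$ is automatically orthogonal to $\nu$ and hence tangent to $\partial\Br$ along $\bS$, the boundary terms of $Q(V^\perp,V^\perp)$ and $\mathcal{I}(V^\perp,V^\perp)$ both equal $-\cot r\int_{\bS}|V^\perp|^2\,\dvl$, and comparing the bulk terms of \eqref{eq: second variantion of energi0} and \eqref{eq:index_form} does leave exactly $\int_\Sigma|B|^2|V^\perp|^2\,\dvg$. The individual algebraic facts you invoke are also correct (the decomposition of the conformality condition through Gauss--Weingarten, $\operatorname{div}_\Sigma V^\top=2\xi$ from the trace and minimality, $\sum_i|B(E_i,V^\top)|^2=\tfrac12|B|^2|V^\top|^2$ from the traceless-operator identity in dimension two, and $\sum_i\langle\nabla^\Sigma_{E_i}V^\top,A_{V^\perp}E_i\rangle=|A_{V^\perp}|^2$). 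But the proof stops exactly where it becomes a proof: the identity
\begin{equation*}
Q(V^\top,V^\top)+2\,Q(V^\top,V^\perp)=-\int_\Sigma|B|^2|V^\perp|^2\,\dvg
\end{equation*}
is the entire content of the lemma in your formulation, and it is asserted rather than established. In particular, the cross term $\sum_i\langle B(E_i,V^\top),\nabla^\perp_{E_i}V^\perp\rangle$, once integrated by parts, produces a divergence of the second fundamental form that must be converted via the Codazzi equation, together with boundary terms involving $\langle B(\nu,\cdot)V^\top,V^\perp\rangle$; these have to be matched against the $-\cot r\int_{\bS}(|V^\top|^2+2\langle V^\top,V^\perp\rangle)\,\dvl$ discrepancy using both $\langle V^\top,\nu\rangle=0$ and $\overline{\nabla}_X\nu=\cot r\,X$, and the quadratic tangential terms $\tfrac12|B|^2|V^\top|^2$, $|A_{V^\perp}|^2$, $|\nabla^\Sigma V^\top|^2$ and $|V^\top|^2$ must all cancel or recombine into $-|B|^2|V^\perp|^2$. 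None of this is carried out, and you flag it yourself as ``the main obstacle.'' As written, the argument is a plan, not a proof.

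For comparison, the paper avoids all of this tensor calculus. It takes isothermal coordinates at $t=0$, writes $E(\Phi_t)=\tfrac12\int(g_{t,11}+g_{t,22})\,ds\,d\theta$ and $A(\Phi_t)=\int\sqrt{g_{t,11}g_{t,22}-g_{t,12}^2}\,ds\,d\theta$, and observes that conformality of $V$ means precisely $\dot g_{11}=\dot g_{22}=\xi\rho$ and $\dot g_{12}=0$; differentiating the area integrand twice, the first-order terms then cancel pointwise ($-2\rho\xi^2+2\rho\xi^2=0$), leaving $\ddot A=\tfrac12\int(\ddot g_{11}+\ddot g_{22})=\ddot E$. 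This is a five-line computation that uses conformality exactly once and never touches $B$. If you want to salvage your approach, you must actually perform the integration by parts and exhibit the cancellation; otherwise I would recommend switching to the direct comparison of the energy and area integrands, which is where the two-dimensionality and the conformal hypothesis enter in the most transparent way.
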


\begin{proof}
    Take coordinates $(s,\theta)$ on $\Sigma$, and define
    \begin{equation*}
        {g_t}_{11}=|| {\Phi _t}_*(\partial_s)||^2,\quad {g_t}_{22}=||{\Phi_t}_*(\partial_\theta)||^2,\;
        \text{and}\,\quad {g_t}_{12}=\langle {\Phi_t}_*(\partial_s),{\Phi_t}_*(\partial_\theta)\rangle. 
    \end{equation*}
    We have 
    \begin{equation*}
    E(\Phi_t)=\frac{1}{2}\int_\Sigma({g_t}_{11}+{g_t}_{22}) dsd\theta, 
    \end{equation*}
    and 
    \begin{equation*}
    A(\Phi_t)=\int_ \Sigma \sqrt{{g_t}_{11}{g_t}_{22}-({g_t}_{12})^2}  dsd\theta.
    \end{equation*}
    Suppose that at $t=0$ we have isothermal coordinates, i.e.$g_{11}=g_{22}=\rho$ and $g_{12}=0$. (We dropped the subscript $0$, so to simplify the notation). Since $\dot{\Phi}=V$ is conformal,
    \begin{equation*}
       \dot{g}_{11}= \frac{d}{dt}_{|t=0} {g_t}_{11}=\langle\nabla_V \Phi_*(\partial_s),\Phi_*(\partial_s)\rangle =\xi {g}_{11}=\dot{g}_{22},
    \end{equation*}
for some function $\xi$, and
    \begin{equation*}
         \dot{g}_{12}=\langle\nabla_V \Phi_*(\partial_s),\Phi_*(\partial_\theta)\rangle = 0. 
    \end{equation*}
Therefore, if $\ddot{g}_{11}=\frac{d^2}{dt^2}_{|t=0}{g_t}_{11}$ and $\ddot{g}_{22}=\frac{d^2}{dt^2}_{t=0}{g_t}_{22}$ we have
\begin{equation*}
   Q(V,V)= \frac{d^2}{dt^2}_{|t=0} E(\Phi_t)=\frac{1}{2}\int_\Sigma( \ddot{g}_{11}+\ddot{g}_{22})dsd\theta.
\end{equation*}
On the other hand,
\begin{align*}
    \mathcal{I}(V^\perp,V^\perp)&=\frac{d^2}{dt^2}_{|t=0} A(\Phi_t)=\frac{d^2}{dt^2}_{|t=0}\int_\Sigma \sqrt{{g_t}_{11}{g_t}_{22}-({g_t}_{12})^2}dsd\theta\\
    &=\frac{1}{2} \frac{d}{dt}_{|t=0}\int_\Sigma \frac{(\dot{g}_t)_{11}{g_t}_{22}+{g_t}_{11}(\dot{g}_t)_{22}-2{g_t}_{12}(\dot{g}_t)_{12}}{({g_t}_{11}{g_t}_{22}-({g_t}_{12})^2)^{1/2}}dsd\theta\\
    &=\frac{1}{2}\int \left(-\dfrac{1}{2}\rho^{-3}(\rho \dot{g}_{11}+\rho \dot{g}_{22})^2+2\rho^{-1}\dot{g}_{11}\dot{g}_{22}+\ddot{g}_{11}+\ddot{g}_{22} \right)dsd\theta\\
    &=\dfrac{1}{2}\int_\Sigma \left(- 2 \rho \xi^2 +2 \rho \xi^2 + \ddot{g}_{11}+\ddot{g}_{22}\right)dsd\theta \\
    &=\dfrac{1}{2}\int_\Sigma \ddot{g}_{11}+\ddot{g}_{22}dsd\theta.
\end{align*}

Therefore, for conformal vector fields along $\Sigma$, we have
\begin{equation*}
    \mathcal{I}(V^\perp,V^\perp)=Q(V,V).
\end{equation*}
\end{proof}

Now let us specialize $(\Sigma, g)$ to be a M\"obius band equipped with a Riemannian metric $g$. Then there exists a constant $s_0 > 0$ such that $(\Sigma, g)$ is conformally equivalent to the cylinder $[-s_0, s_0] \times \mathbb{S}^1$ with the identification $(s, \theta) \sim (-s, \theta + \pi)$
endowed with the flat cylindrical metric $g_0 = ds^2 + d\theta^2$. More precisely, there exists a positive smooth conformal factor $\rho$ such that $g = \rho(s, \theta) (ds^2 + d\theta^2)$,  where $\rho$ respects the symmetry of the M\"obius band, i.e. $
\rho(s, \theta) = \rho(-s, \theta + \pi).$

Let $\partial_\theta$ be the globally defined tangent vector field associated with the angular coordinate $\theta$. Observe that $\partial_\theta(p)$ spans $T_p \partial\Sigma$ at $p \in \partial\Sigma$, and the outward unit conormal of $\partial\Sigma$ in $(\Sigma,g)$ is either $\frac{1}{\sqrt{\rho}}\partial_s$ or $-\frac{1}{\sqrt{\rho}}\partial_s$. 

For the map $\Phi:(\Sigma,g)\rightarrow \mathbb{B}^{n}(r)$, define $\Phi_\theta = \Phi_*(\partial_\theta) \in \Gamma(\Phi^* T\mathbb{S}^n)$.We will prove that this vector field is in the null space of the energy index form.

\begin{lema}\label{Lemma: Null space of Q}
    Let $\Phi = (\phi_0,\dots,\phi_n) \colon (\Sigma,g) \to \mathbb{B}^n(r)$ be a free boundary minimal isometric immersion of a Möbius band. Then $\Phi_\theta$ belongs to the null space of $Q$ in the sense that
    \begin{equation*}
        Q(V, \Phi_\theta) = 0,
    \end{equation*}
    for any $V \in \Gamma(\Phi^* T\mathbb{S}^n)$ satisfying $V_p \in T_{\Phi(p)} \partial \mathbb{B}^n(r)$ for all $p \in \partial\Sigma$.
\end{lema}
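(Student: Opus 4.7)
The plan is to exploit the conformal invariance of the Dirichlet energy in dimension $2$ together with the $S^1$-symmetry of the flat conformal representative on the Möbius band. The key observation is that $\partial_\theta$ generates a one-parameter group of conformal diffeomorphisms of $(\Sigma, g)$ that preserve $\partial \Sigma$ setwise and the free boundary condition; using this, one can produce a two-parameter variation of $\Phi$ whose energy is constant in one of the two parameters, which will force $Q(V, \Phi_\theta) = 0$ without any direct computation of the interior or boundary terms in $Q$.

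First I would set up the flow $\psi_t(s, \theta) = (s, \theta + t)$. Because the Möbius-band identification $(s, \theta) \sim (-s, \theta + \pi)$ commutes with translation in $\theta$, each $\psi_t$ descends to a well-defined diffeomorphism of $\Sigma$ preserving $\partial \Sigma = \{s = \pm s_0\}/\!\sim$ setwise. Since $g = \rho(s, \theta)(ds^2 + d\theta^2)$, one computes
\[
\psi_t^* g \;=\; \rho(s, \theta + t)(ds^2 + d\theta^2) \;=\; \frac{\rho(s, \theta + t)}{\rho(s, \theta)}\, g,
\]
so $\psi_t$ is a conformal diffeomorphism of $(\Sigma, g)$.

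Next, given an admissible section $V$, I would pick an admissible one-parameter variation $\Phi_s$ of $\Phi$ with $\dot{\Phi}_s|_{s=0} = V$ (obtained, for instance, by flowing $\Phi$ along a smooth extension of $V$ to $\mathbb{S}^n$ that is tangent to $\partial \mathbb{B}^n(r)$ there), and form the two-parameter family $\Phi_{s, t}(p) := \Phi_s(\psi_t(p))$. This family is admissible for every $(s, t)$ because $\psi_t(\partial \Sigma) = \partial \Sigma$ and $\Phi_s(\partial \Sigma) \subset \partial \mathbb{B}^n(r)$, and it satisfies $\partial_s \Phi_{s, t}|_{(0,0)} = V$ and $\partial_t \Phi_{s, t}|_{(0,0)} = \Phi_\theta$. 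By the conformal invariance of $E_g$ in two dimensions, together with the fact that $\psi_t \colon (\Sigma, \psi_t^* g) \to (\Sigma, g)$ is an isometry,
\[
E_g(\Phi_s \circ \psi_t) \;=\; E_{\psi_t^* g}(\Phi_s \circ \psi_t) \;=\; E_g(\Phi_s),
\]
so $E(\Phi_{s, t})$ is independent of $t$ and in particular the mixed partial $\partial_s \partial_t E(\Phi_{s, t})$ vanishes at $(0, 0)$.

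Finally, since $\Phi$ is a free boundary minimal isometric immersion in $\mathbb{S}^n$, it is a critical point of $E$ on the space of admissible maps; the first-variation term therefore drops out of the Taylor expansion of $E(\Phi_{s,t})$ around $(0,0)$, and the mixed partial equals the Hessian $Q(V, \Phi_\theta)$. Combining with the previous step yields $Q(V, \Phi_\theta) = 0$. The only real subtlety is this last identification of the mixed partial with the Hessian, which is standard once criticality is established; I expect no direct computation of interior or boundary terms to be needed, which is the decisive advantage of the invariance approach over a term-by-term integration by parts (where one would have to verify separately that $\Phi_\theta$ is a Jacobi field and that the boundary contributions cancel under the free boundary condition).
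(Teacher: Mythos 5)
Your argument is correct, and it takes a genuinely different route from the paper's. The paper proves the lemma by brute force: it differentiates the system $\Delta_{g_0}\phi_i+2\rho\phi_i=0$ in $\theta$ to get $\Delta_g\Phi_\theta+2\Phi_\theta+2\rho^{-1}\rho_\theta\Phi=0$, kills the $\rho_\theta$ term by pairing with $V\perp\Phi$, and then integrates by parts, using the Steklov boundary conditions to convert the boundary contribution into $\cot r\int_{\partial\Sigma}\langle V,\Phi_\theta\rangle$. You run the same symmetry one level up, as a Noether-type identity for the functional: $\theta$-translation descends to the Möbius band, preserves $\partial\Sigma$, and is conformal for $g$, so two-dimensional conformal plus diffeomorphism invariance of the energy makes $E(\Phi_s\circ\psi_t)$ independent of $t$ and forces the mixed partial to vanish. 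What your route buys is the complete avoidance of the integration by parts and boundary bookkeeping, and it makes transparent \emph{why} $\Phi_\theta$ is a null direction; what it costs is that the entire weight of the proof shifts onto the identification $\partial_s\partial_t E(\Phi_{s,t})|_{(0,0)}=Q(V,\Phi_\theta)$, which you defer as standard. That identification is indeed available here: the paper's formula \eqref{eq: second variation of energi} gives $Q(V,V)$ as the second derivative of energy along any admissible variation with velocity $V$, criticality of $\Phi$ (harmonicity plus $d\Phi(\nu)\perp T\partial\mathbb{B}^n(r)$) makes the Hessian well defined, and polarization yields the mixed-partial version. One caveat on your wording: the first-variation term does not simply ``drop out,'' because the acceleration $\nabla_s\partial_t\Phi_{s,t}|_{(0,0)}$ is \emph{not} tangent to $\partial\mathbb{B}^n(r)$ along $\partial\Sigma$; $\delta E(\Phi)$ applied to it produces exactly the second-fundamental-form term $-\cot r\int_{\partial\Sigma}\langle V,\Phi_\theta\rangle$, which is already built into the definition of $Q$ in \eqref{eq: second variantion of energi0}. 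With that clarification your proof is complete and correct.
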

\begin{proof}
By the minimality and free boundary condition, the coordinate functions satisfy
\begin{align*}
\Delta_g \varphi_i + 2\varphi_i &= 0, \quad \text{in } \Sigma, \quad i = 0, 1, \ldots, n,  \\
\frac{\partial \varphi_0}{\partial \nu} + (\tan r)\varphi_0 &= 0, \quad \text{on } \partial \Sigma,  \\
\frac{\partial \varphi_i}{\partial \nu} - (\cot r)\varphi_i &= 0, \quad \text{on } \partial \Sigma, \quad i = 1, \ldots, n. 
\end{align*}
Observe that $\Delta_g = \rho^{-1}\Delta_{g_0}=\rho^{-1}(\partial_s\partial_s+\partial_\theta\partial_\theta)$. Thus,  the first equation is equivalent to \( \Delta_{g_0}\phi_i+2\rho \phi_i=0 \), and we can differentiate it with respect to $\theta$ to obtain 
\begin{align*}
    0&=\frac{\partial}{\partial\theta}\left(\Delta_{g_0}\phi_i+2\rho\phi_i \right)\\
    &=\Delta_{g_0}\frac{\partial\phi_i}{\partial\theta}+2\frac{\partial\rho}{\partial\theta}\phi_i+2\rho \frac{\partial\phi_i}{\partial\theta}.
\end{align*}

Let  $\Phi_\theta=\left(\frac{\partial \phi_0}{\partial \theta}, \cdots, \frac{\partial \phi_n}{\partial \theta}\right)$. Then we obtained the equation 
\begin{equation}\label{eq: derivada angular fator conforma}
    \Delta_g \Phi_\theta+2\Phi_\theta+ 2\rho^{-1}\frac{\partial\rho}{\partial\theta}\Phi=0,
\end{equation}
where the Laplacian is taken coordinate to coordinate. As we have \(\langle V, \Phi\rangle=0\) for all $V\in\Gamma(\Phi^* T\mathbb{S}^n)$, we have 
\[
0=\langle V, \Delta_{g} \Phi_\theta+2\frac{\partial\rho}{\partial\theta}\Phi+2\rho \Phi_\theta\rangle=\langle V, \Delta_{g} \Phi_\theta+2\rho \Phi_\theta\rangle,
\]
for all $V\in \Gamma(\Phi^* T\mathbb{S}^n)$.

Consider \( V=(V_0,\cdots, V_n)\). Then
\begin{align*}
    0&=\int_\Sigma \langle V, \Delta _g\Phi_\theta\rangle  +2\langle V,\Phi_\theta\rangle \dvg\\
    &=\int_\sum \text{div}_g\left( \sum_{j=0}^n V_j \nabla^g \frac{\partial\phi_j}{\partial\theta} \right)-\sum_{j=0}^n\langle \nabla^g V_j, \nabla^g \frac{\partial\phi_j}{\partial\theta}\rangle+2\langle V,\Phi_\theta\rangle \dvg\\
    &=-\int_\Sigma \sum_{j=0}^n\langle \nabla^g V_j,\nabla^g \frac{\partial\phi_j}{\partial\theta}\rangle+2 \langle V,\Phi_\theta \rangle \dvg+\int_{\partial\Sigma} \sum_{j=0}^n V_j \langle \nabla^g \frac{\partial \phi_j}{\partial\theta},\nu_g \rangle dl_g\\
        &=-\int_\Sigma \sum_{j=0}^n\langle \nabla^g V_j,\nabla^g \frac{\partial\phi_j}{\partial\theta}\rangle+2 \langle V,\Phi_\theta \rangle \dvg\\
        &\quad +\int_{\partial\Sigma} \sum_{j=0}^n V_j \left[ \frac{\partial}{\partial\theta} \langle \nu_g,\nabla^g \phi_ j\rangle-\frac{\partial}{\partial\theta} \rho^{-1/2}\langle  \nu_{g_0}, \nabla^g \phi_j \rangle \right] dl_g\\
         &=-\int_\Sigma \sum_{j=0}^n\langle \nabla^g V_j,\nabla^g \frac{\partial\phi_j}{\partial\theta}\rangle+2 \langle V,\Phi_\theta \rangle \dvg\\
        &\quad +\cot{r}\int_{\partial\Sigma} \langle V,\Phi_\theta\rangle- \frac{\partial}{\partial\theta} \rho^{-1/2} \langle V, \Phi \rangle dl_g\\
        &=-\int_\Sigma \sum_{j=0}^n\langle \nabla^g V_j,\nabla^g \frac{\partial\phi_j}{\partial\theta}\rangle+2 \langle V,\Phi_\theta \rangle \dvg +\cot{r}\int_{\partial\Sigma} \langle V,\Phi_\theta\rangle dl_g.
\end{align*}

On the other hand, observe that, for any $V,W \in \Gamma(\Phi^* T \mathbb{S}^n)$, we have

\begin{equation*}
   \sum_{j=0}^n \langle \nabla^g V_j, \nabla^g W_j\rangle=\langle \overline{\nabla} V,\overline{\nabla} W\rangle+ \langle V^\top, W^\top\rangle. 
\end{equation*}
Finally we have 

\begin{align*}
    0&=\int_{\Sigma} -\langle \overline{\nabla} V, \overline{\nabla}\Phi_\theta\rangle-\langle V^\top, \Phi_\theta^\top\rangle+2\langle V, \Phi_\theta\rangle \dvg+\cot{r}\int_{\partial\Sigma} \langle V, \Phi_\theta\rangle = - Q(V,\Phi_\theta).
\end{align*}
\end{proof}

  In the next proposition, we refine \textbf{Lemma \ref{Lemma: dimesion of the V}}, by showing that the normal component of $\partial_0$ is linearly independent to the normal vector fields defined in \eqref{eq:variation_field}.
    
\begin{lema}\label{Lemma: dimesion of the sapce}
Suppose that $\Sigma$ is not contained in a plane passing through the origin. Let
\begin{equation*}
    \mathcal{C}=\text{span}\{V_1,\cdots, V_n, \partial_0^\perp\},
\end{equation*}
where $V_i=V_{e_i}$ as in \eqref{eq:variation_field}, for every $i=1,\ldots,n$. Then $\mathcal{C}$ is $(n+1)$-dimensional vector space of normal vector fields long $\Sigma$.
\end{lema}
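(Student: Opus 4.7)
The plan is to leverage Lemma \ref{Lemma: dimesion of the V} — which already gives the linear independence of $V_1, \ldots, V_n$, since our present hypothesis (no hyperplane through the origin) is strictly stronger than its hypothesis (no hyperplane containing $e_0$) — and thereby reduce the claim to showing that $\partial_0^\perp$ is not in their span. I would argue by contradiction: assume $\partial_0^\perp = \sum_{i=1}^n a_i V_i$, set $y = \sum a_i e_i \in \operatorname{span}\{e_0\}^\perp$, and use \eqref{eq:variation_field} together with the linearity of $y \mapsto V_y$ to collapse the assumed equality to the single identity
\[
(1-\phi_y)\,\partial_0^\perp = (c(r)-\phi_0)\,\partial_y^\perp \qquad\text{on } \Sigma.
\]

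Restricting to $\partial\Sigma$ is the first key step. There the free boundary condition gives $\partial_0^\perp \equiv 0$ and $\phi_0 \equiv \cos r$, so the identity reduces to $(c(r)-\cos r)\,\partial_y^\perp|_{\partial\Sigma}=0$, and since $c(r)-\cos r = \sin r(1+\sin r)/\cos r \neq 0$ for $0<r<\pi/2$ we extract $\partial_y^\perp|_{\partial\Sigma} = 0$. If $y \neq 0$, this is precisely the starting hypothesis of the Obata argument carried out in Lemma \ref{Lemma: dimesion of the V}: the same Hessian computation yields $\operatorname{Hess}_{\partial\Sigma}\phi_y = -\phi_y\,g$, Obata combined with $r \neq \pi/2$ forces $\phi_y|_{\partial\Sigma} \equiv 0$, and unique continuation (noting also $\partial_\nu \phi_y = \cot r\,\phi_y|_{\partial\Sigma} = 0$ from \eqref{eq:free_boundary_condition}) propagates $\phi_y \equiv 0$ to $\Sigma$. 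Thus $\Sigma$ sits in the hyperplane $\{\langle\cdot,y\rangle=0\}$, which passes through the origin — contradicting the hypothesis.

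The remaining subcase $y = 0$ reduces to $\partial_0^\perp \equiv 0$ on $\Sigma$, and I would dispatch it intrinsically. Since $\overline{\nabla}\phi_0 = \partial_0 = \partial_0^\top$, the standard decomposition $\operatorname{Hess}_\Sigma\phi_0(X,Y) = \operatorname{Hess}_{\mathbb{S}^n}\phi_0(X,Y) + \langle\partial_0^\perp, B(X,Y)\rangle$ collapses to $\operatorname{Hess}_\Sigma\phi_0 = -\phi_0\,g$ on all of $\Sigma$. A short Bochner computation (or Obata rigidity applied to $\phi_0$) then yields Gaussian curvature $K \equiv 1$, and the Gauss equation $K = 1 - \tfrac{1}{2}|B|^2$ for a minimal surface in $\mathbb{S}^n$ forces $B \equiv 0$. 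Hence $\Sigma$ is totally geodesic and sits inside a great $2$-sphere, whose ambient linear span is a $3$-dimensional subspace of $\mathbb{R}^{n+1}$; for $n\geq 3$ this is a proper subspace, contained in a hyperplane through the origin — again contradicting the hypothesis.

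The main obstacle is precisely this $y=0$ subcase: the $y \neq 0$ branch is essentially a verbatim reprise of Lemma \ref{Lemma: dimesion of the V}, while $\partial_0^\perp \equiv 0$ does not directly produce a hyperplane containment and must be routed through an intrinsic rigidity statement plus the Gauss equation. Once both subcases are resolved, the asserted dimension $\dim\mathcal{C} = n+1$ is immediate from $\dim\operatorname{span}\{V_1,\ldots,V_n\} = n$ together with $\partial_0^\perp \notin \operatorname{span}\{V_1,\ldots,V_n\}$.
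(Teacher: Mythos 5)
Your proof is correct, and its overall architecture matches the paper's: reduce to a linear relation, restrict to $\partial\Sigma$ to extract $\partial_y^\perp|_{\partial\Sigma}=0$, run the Obata-on-the-boundary plus unique continuation argument when $y\neq 0$, and handle $y=0$ separately. The one place you genuinely diverge is the subcase $\partial_0^\perp\equiv 0$. The paper disposes of it by observing that $\partial_0$ is then everywhere tangent to $\Sigma$, so $\Sigma$ is foliated by orbits of the conformal field $\partial_0$ (arcs of great circles through $\pm e_0$), and concludes total geodesy from there; you instead derive $\operatorname{Hess}_\Sigma\phi_0=-\phi_0\,g$ on all of $\Sigma$, deduce $K\equiv 1$, and invoke the Gauss equation $K=1-\tfrac12|B|^2$ for minimal surfaces. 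Your route is arguably more self-contained and rigorous than the paper's rather terse flow-line claim (which, if one wants to justify it, is most cleanly done by noting $B(X,\partial_0^\top)=-(\overline{\nabla}_X\partial_0^\perp)^\perp=0$ and then using minimality). The trade-off is that your argument is specific to surfaces, since it routes through Gaussian curvature and the two-dimensional Gauss equation, whereas the paper remarks immediately after the lemma that the statement holds in arbitrary dimension and codimension — the tangency/second-fundamental-form argument survives that generalization, yours does not without modification. Two minor shared caveats: both proofs need $n\geq 3$ for "totally geodesic $\Rightarrow$ contained in a hyperplane through the origin" (you at least flag this explicitly; it is automatic in the paper's applications), and both implicitly use that $\phi_0$ is non-constant so that the Hessian identity actually forces $K=1$, which follows since $\phi_0\equiv\cos r$ would force $\partial_0=0$ on $\Sigma$.
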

\begin{proof}
Let \( a_0, \dots, a_n \in \mathbb{R} \) be such that
    \[
    a_0 \partial_0^\perp + \sum_{i=1}^n a_i V_i = 0.
    \]
We want to show that $a_i=0$ for every $i=0,\ldots,k$. Consider the vector \( y \in \mathbb{R}^{n+1} \) defined by
    \[
    y = \sum_{i=1}^n a_i e_i.
    \]
Observe that \( y \perp e_0 \). Then we have
    \[
    \left( a_0 + \langle p, y \rangle \right) \partial_0^\perp + \left( \frac{1 + \sin r}{\cos r} - \phi_0 \right) \partial_y^\perp = 0.
    \]
    
If \( y \neq 0 \), observe that for \( p \in \partial \Sigma \), we have \( \partial_0^\perp(p) = 0 \), and hence \( \partial_y^\perp|_{\partial \Sigma} = 0 \). As in the proof of \textbf{Lemma \ref{Lemma: dimesion of the V}}, we conclude that $\Sigma$ is orthogonal to the fixed vector $y$, which contradicts our assumptions. 

Hence, \( y = 0 \), meaning that $a_1=\ldots=a_n=0$. If $a_0\neq 0$, then \( \partial_0^\perp(p) = 0 \) for all \( p \in \Sigma \). In this case $\Sigma$ is contained in the union of orbits of the flow of the conformal vector field $\partial_0$, which are contained in great circles of $\mathbb{S}^n$. It follows that \( \Sigma \) is totally geodesic in $\mathbb{S}^n$, which again contradicts the assumption that $\Sigma$ is not contained in a plane through the origin.

Thus, the only possibility is that $a_i=0$ for every $i=0,\ldots,n$, as we wanted to prove.
\end{proof}

We remark that, as the above proof shows, Lemma \ref{Lemma: dimesion of the sapce} holds in arbitrary dimensions and codimensions, and can be seen as an generalization of Lemma \ref{Lemma: dimesion of the V}.\\

We now proceed to prove that there exists a subspace of dimension \( n \) in \( \mathcal{C} \), such that each element of this subspace can be deformed by a tangential vector field in a way that the resulting deformation vector field is conformal.

\begin{prop}\label{Prop:ConformalIndex}
    Let \( \Phi: (\Sigma, g) \to \mathbb{B}^n(r) \) be a free boundary minimal immersion of a Möbius band. Then there exists a subspace \( \mathcal{C}_1 \subset \mathcal{C} \) of dimension at least \( n \) such that, for every \( V \in \mathcal{C}_1 \), the following properties hold:
    \begin{enumerate}
        \item There exists a tangential vector field \( Y^\top \in \Gamma(T\Sigma) \), tangent to \( \partial \mathbb{B}^n(r) \) along \( \partial \Sigma \);
        
        \item The deformation field \( Y(V) = V + Y^\top \) is conformal;
        
        \item If \( Y_1^\top \) and \( Y_2^\top \) are tangential vector fields such that \( V + Y_1^\top = V + Y_2^\top \) is conformal, then there exists a constant \( c \in \mathbb{R} \) such that
        \[
        \Phi_*(Y_1^\top - Y_2^\top) = c\, \Phi_\theta;
        \]
        
        \item The associated quadratic forms satisfy
        \[
        Q(Y(V), Y(V)) = \mathcal{I}(V, V).
        \]
    \end{enumerate}
\end{prop}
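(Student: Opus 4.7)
The plan is to recast items (1)--(3) as the solvability and uniqueness of an elliptic boundary value problem governed by the conformal Killing operator on the Möbius band, then deduce (4) from Lemma \ref{lemma: Relacao da variacao de energia e da area}. Work in the isothermal coordinates $(s,\theta)$ with $g=\rho(s,\theta)(ds^2+d\theta^2)$ introduced before the proposition, and fix a tangential field $Y^\top\in\mathfrak{X}(\Sigma)$ satisfying $Y^\top|_{\partial\Sigma}\in T\partial\Sigma$ (this tangency is equivalent, by the free boundary condition, to $\Phi_* Y^\top$ being tangent to $\partial\mathbb{B}^n(r)$ along $\partial\Sigma$). A direct computation, using minimality of $\Phi$ (so that $(X,Y)\mapsto \langle V,B(X,Y)\rangle$ is trace-free) together with the decomposition $\overline\nabla_X \Phi_* Y^\top = \Phi_* \nabla^\Sigma_X Y^\top + B(X,Y^\top)$, shows that $Y(V):=V+\Phi_* Y^\top$ is conformal if and only if $Y^\top$ solves
\begin{equation*}
\mathcal{L}(Y^\top):=\mathcal{L}_{Y^\top}g-(\operatorname{div}_g Y^\top)\,g = 2\langle V,B(\cdot,\cdot)\rangle
\end{equation*}
on $\Sigma$, with the above tangency boundary condition.

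The operator $\mathcal{L}$ is the first-order elliptic conformal Killing operator from $T\Sigma$ into the bundle of trace-free symmetric $(0,2)$-tensors. Integration by parts identifies its cokernel (with the tangential boundary condition) with the space of trace-free, divergence-free symmetric $(0,2)$-tensors $\tau$ satisfying $\tau(\nu,\partial_\theta)=0$ along $\partial\Sigma$. In the complex coordinate $z=s+i\theta$ on the orientable double cover $[-s_0,s_0]\times\mathbb{S}^1$ these correspond to holomorphic functions $q(z)$ with $\operatorname{Im}q=0$ on $\{s=\pm s_0\}$, and a Fourier expansion $q(z)=\sum_{n\in\mathbb{Z}} c_n e^{nz}$ in $\theta$ forces $c_n=0$ for all $n\neq 0$ and $c_0\in\mathbb{R}$. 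Hence this space is one-dimensional, spanned by $\tau_0=ds^2-d\theta^2$, which is invariant under the deck involution $(s,\theta)\mapsto(-s,\theta+\pi)$ and therefore descends to the Möbius band. A parallel Fourier argument on $T\Sigma$ shows $\dim\ker\mathcal{L}=1$, spanned by $\partial_\theta$.

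By the Fredholm alternative, the equation $\mathcal{L}(Y^\top)=2\langle V,B\rangle$ is solvable if and only if $V$ annihilates a single linear functional on $\mathcal{C}$, namely the $L^2$-pairing of $2\langle V,B\rangle$ with $\tau_0$. Set $\mathcal{C}_1\subset\mathcal{C}$ to be the kernel of this functional; since $\dim\mathcal{C}=n+1$ by Lemma \ref{Lemma: dimesion of the sapce}, one has $\dim\mathcal{C}_1\geq n$, and elliptic regularity yields a smooth solution $Y^\top$, establishing (1) and (2). For (3), any two such corrections $Y_1^\top, Y_2^\top$ differ by an element of $\ker\mathcal{L}=\operatorname{span}(\partial_\theta)$, which pushes forward to $c\,\Phi_\theta$. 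Finally, (4) is immediate from Lemma \ref{lemma: Relacao da variacao de energia e da area}: $Y(V)$ is conformal, tangent to $\partial\mathbb{B}^n(r)$ along $\partial\Sigma$, and has normal component $Y(V)^\perp=V$, so the lemma gives $Q(Y(V),Y(V))=\mathcal{I}(V,V)$.

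The main obstacle is the sharp identification $\dim\operatorname{coker}\mathcal{L}=1$, which is the Teichmüller-theoretic fact that the Möbius band admits a one-parameter family of conformal classes. The plan is to establish this by the explicit Fourier computation on the cylindrical double cover sketched above, combined with a careful check that the tangential boundary condition on $Y^\top$ produces precisely the boundary condition $\tau(\nu,\partial_\theta)=0$ on the cokernel side via integration by parts; minor technical points to verify include the equivariance of the generator $\tau_0$ and the regularity of the solution up to the boundary.
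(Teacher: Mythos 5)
Your proposal is correct and follows essentially the same route as the paper: both reduce items (1)--(3) to a Fredholm alternative for the linearized conformality equation with the tangency boundary condition, identify a one-real-dimensional cokernel (your holomorphic quadratic differentials real on the boundary are exactly the paper's adjoint kernel $\{\xi\equiv c\in\mathbb{R}\}$ for $-\partial/\partial z$ with $\mathfrak{Im}\,\xi=0$ on $\partial\tilde\Sigma$), obtain $\mathcal{C}_1$ as the kernel of one linear functional on the $(n+1)$-dimensional $\mathcal{C}$, get (3) from the kernel being spanned by $\partial_\theta$, and get (4) from Lemma \ref{lemma: Relacao da variacao de energia e da area}. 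The only difference is notational — the paper writes the conformal Killing operator explicitly as $\partial f/\partial\overline{z}=h$ for $f=u+iv$ and handles the descent to the Möbius band via the symmetry $f\circ\tau=-\overline{f}$, whereas you work invariantly and check equivariance of the cokernel generator; both are adequate.
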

\begin{proof}
Let \( \tilde{\Sigma} = [-T, T] \times \mathbb{S}^1 \) be the orientable double cover of \( \Sigma \). Let $\tau$ be the deck trasformation i.e, $\tau(s,\theta)=(-s,\theta+\pi)$

Our first goal is to find \( Y^\top \in \Gamma(T\tilde{\Sigma}) \) such that \( Y = Y(V) = V + Y^\top \), for $V\in \mathcal{C}$, is conformal and satisfies the symmetry condition
\[
Y ^\top= Y^\top \circ \tau.
\]
The conformality condition yields the following equations:
\begin{equation*}
\langle \nabla_{\partial_\theta} Y, \partial_s \rangle + \langle \nabla_{\partial_s} Y, \partial_\theta \rangle = 0, \quad \text{and} \quad \langle \nabla_{\partial_s} Y, \partial_s \rangle = \langle \nabla_{\partial_\theta} Y, \partial_\theta \rangle.
\end{equation*}

Expanding the first condition, and using that $V\in \mathcal{C}$ is normal:
\begin{align*}
0 &= \langle \nabla_{\partial_\theta}(Y^\top + V), \partial_s \rangle + \langle \nabla_{\partial_s}(Y^\top + V), \partial_\theta \rangle \\
&= \langle \nabla_{\partial_\theta} Y^\top, \partial_s \rangle + \langle \nabla_{\partial_s} Y^\top, \partial_\theta \rangle - 2 \langle V, B(\partial_s, \partial_\theta) \rangle.
\end{align*}
Similarly,the second condition becomes:
\begin{align*}
0 &= \langle \nabla_{\partial_\theta}(Y^\top + V), \partial_\theta \rangle - \langle \nabla_{\partial_s}(Y^\top + V), \partial_s \rangle \\
&= \langle \nabla_{\partial_\theta} Y^\top, \partial_\theta \rangle - \langle \nabla_{\partial_s} Y^\top, \partial_s \rangle \\
&\quad - \langle V, B(\partial_\theta, \partial_\theta) \rangle + \langle V, B(\partial_s, \partial_s) \rangle \\
&= \langle \nabla_{\partial_\theta} Y^\top, \partial_\theta \rangle - \langle \nabla_{\partial_s} Y^\top, \partial_s \rangle + 2 \langle V, B(\partial_s, \partial_s) \rangle,
\end{align*}
where in the last line we use the fact that \( \Sigma \) is minimal.

Write \( Y^\top = u \Phi_s + v \Phi_\theta \), and note that the metric is given by \( g = \rho (ds^2 + d\theta^2) \). Then,
\[
\nabla_X Y = \nabla^{g_0}_X Y + \frac{1}{2\rho} \left[ X(\rho) Y + Y(\rho) X - g_0(X, Y) \nabla^{g_0} \rho \right],
\]
and since \( \{ \partial_s, \partial_\theta \} \) is a parallel frame for \( g_0 \), we compute:

\begin{align*}
\langle \nabla_{\partial_s} Y^\top, \partial_s \rangle 
&= \langle \nabla_{\partial_s}^{g_0}(u \partial_s + v \partial_\theta), \partial_s \rangle \\
&\quad + \frac{1}{2\rho} \langle (\partial_s \rho)(u \partial_s + v \partial_\theta) + (u \partial_s \rho + v \partial_\theta \rho) \partial_s \\
& \quad- u[(\partial_s \rho) \partial_s + (\partial_\theta \rho) \partial_\theta], \partial_s \rangle \\
&= u_s \rho + \frac{1}{2} (u \rho_s + v \rho_\theta).
\end{align*}

Similarly, we find:
\[
\langle \nabla_{\partial_\theta} Y^\top, \partial_\theta \rangle = v_\theta \rho + \frac{1}{2}(v \rho_\theta + u \rho_s),
\]
\[
\langle \nabla_{\partial_\theta} Y^\top, \partial_s \rangle = u_\theta \rho + \frac{1}{2}(u \rho_\theta - v \rho_s),
\]
\[
\langle \nabla_{\partial_s} Y^\top, \partial_\theta \rangle = v_s \rho + \frac{1}{2}(v \rho_s - u \rho_\theta).
\]

Putting all this together, we obtain the system:
\begin{equation*}
\begin{cases}
u_\theta + v_s = \dfrac{2}{\rho} \langle V, B(\partial_\theta, \partial_s) \rangle, \\ \hfill \\
u_s - v_\theta = \dfrac{2}{\rho} \langle V, B(\partial_s, \partial_s) \rangle.
\end{cases}
\end{equation*}

Define \( f = u + i v \) and \( z = s + i\theta \), then the above system becomes a first-order complex PDE:
\[
\frac{\partial f}{\partial \overline{z}} = h,
\]
where 
\begin{equation}\label{eq: definicao de h}
h = \frac{2}{\rho} \langle V, B(\partial_s, \partial_s) \rangle + i \frac{2}{\rho} \langle V, B(\partial_\theta, \partial_s) \rangle.
\end{equation}

Notice that $h$ satisfies
\begin{equation}\label{eq:Invariancia_h}
    h\circ\tau=\overline{h}
\end{equation}

In fact, as we have  $V=V\circ\tau$,  $\partial_s=-\partial_s\circ\tau$ and $\partial_\theta=\partial_\theta \circ \tau$ we can compute
\begin{align*}
    h(-s,\theta+\pi)&= \frac{2}{\rho\circ\tau} \langle V\circ\tau, B(\partial_s, \partial_s)\circ\tau \rangle \\
    &-+i \frac{2}{\rho\circ\tau} \langle V\circ\tau, B(\partial_\theta, \partial_s)\circ\tau \rangle.\\
    &= \frac{2}{\rho} \langle V, B(-\partial_s,- \partial_s) \rangle
    + i \frac{2}{\rho} \langle V, B(\partial_\theta, -\partial_s) \rangle=\overline{h}(s,\theta).
\end{align*}
In order to ensure that \( Y^\top \) is tangent to \( \partial \mathbb{B}^n(r) \) along \( \partial \Sigma \), we impose the boundary condition \( u = 0 \) on \( \partial \Sigma \), i.e., \( \Re(f) = 0 \) on \( \partial \Sigma \), where \(\Re\) denotes the real part.

Thus, we are led to the following boundary value problem:
\begin{equation}\label{eq: system}
\begin{cases}
\dfrac{\partial f}{\partial \overline{z}} = h & \text{in } \tilde{\Sigma}, \\
\Re(f) = 0 & \text{on } \partial \tilde{\Sigma}.
\end{cases}
\end{equation}

Before we analyze the existence of solutions to this problem, let us observe that any two solutions $f_1$ and $f_2$ to \eqref{eq: system} differ by a imaginary constant. In fact, the real part of  $f_1-f_2$ vanish at the boundary and by the maximum principle vanish at the interior. This remark has also the consequence that any solution $f$ on $\tilde{\Sigma}$ to \eqref{eq: system} satisfy $f\circ \tau=-\overline{f}$. In fact, since   $h$ satisfies \eqref{eq:Invariancia_h} and observe that

\[\frac{\partial }{\partial z} (f\circ\tau)= \left (\frac{\partial f }{\partial z}\circ\tau\right)\frac{\partial \tau}{\partial z}+\left (\frac{\partial f }{\partial \overline{z}}\circ\tau\right)\frac{\partial \overline{\tau}}{\partial z}=-1 \frac{\partial f }{\partial \overline{z}}\circ\tau =-\overline{h},\]

Therefore we have \(-h= \overline{\frac{\partial }{\partial z} (f\circ\tau)}=\frac{\partial }{\partial \overline{z}} (\overline{f\circ\tau})\). Then, both $f$ and $-\overline{f\circ \tau}=-\overline{f}\circ \tau$ are solutions to \eqref{eq: system}, and therefore satisfy $f+\overline{f}\circ \tau = ic$ for some constant $c\in \mathbb{R}$. Since $\tau^2=Id_{\tilde{\Sigma}}$, we conclude that $c$ must vanish, as claimed.

Let us consider the function space
\[
\mathcal{F}_1 = \left\{ f \in H^1(\tilde{\Sigma}, \mathbb{C}) ; \Re(f) = 0 \text{ on } \partial \tilde{\Sigma} \right\}.
\]

The system \eqref{eq: system} is an elliptic boundary value problem on $\tilde{\Sigma}$. By the Fredholm alternative, the problem \eqref{eq: system} has a solution in \( \mathcal{F}_1 \) if and only if the inhomogeneous term \( h \) is orthogonal (in \( L^2(\tilde{\Sigma}, g_0) \)) to the kernel of the adjoint operator. 

The adjoint of \( \dfrac{\partial}{\partial \overline{z}} \) is \( -\dfrac{\partial}{\partial z} \), defined on the domain:
\[
\mathcal{F}_2 = \left\{ \xi \in H^1(\tilde{\Sigma}, \mathbb{C}) ; \mathfrak{Im}(\xi) = 0 \text{ on } \partial \tilde{\Sigma} \right\}.
\]

Then \( \xi \) is in the kernel of the adjoint if, and only if,:
\begin{equation*}\label{eq: adjointhomogenous}
\begin{cases}
\dfrac{\partial \xi}{\partial z} = 0 & \text{in } \tilde{\Sigma}, \\
\mathfrak{Im}(\xi) = 0 & \text{on } \partial \tilde{\Sigma}.
\end{cases}
\end{equation*}

By the maximum principle applied to the imaginary part of a solution $\xi$ of \ref{eq: adjointhomogenous}, we conclude that its problem are real constant functions, i.e. \( \xi \equiv c \in \mathbb{R} \). Hence, a solution of \eqref{eq: system} exists in $\mathcal{F}_1$ if, and only if, the function $h$ as in \eqref{eq: definicao de h} is such that its real part is orthogonal in to constant functions.

Define:
\[
\mathcal{C}_1 = \left\{ V \in \mathcal{C} : \Re \left( \int_{\tilde{\Sigma}} \left[\frac{2}{\rho} \langle V, B(\partial_s, \partial_s) \rangle +i \frac{2}{\rho} \langle V, B(\partial_\theta, \partial_s) \rangle \right] \right) = 0 \right\}.
\]

This is a codimension-one condition on the $n+1$-dimensional space $\mathcal{C}$ (see \textbf{Lemma} \ref{Lemma: dimesion of the sapce}), so \( \dim \mathcal{C}_1 \geq n \). As we have proved, for every $V\in \mathcal{C}_1$ there exists a vector $Y=Y(V)$ satisfying items $(1)$ and $(2)$ of the proposition.

Regarding item $(3)$, let \( Y_k^\top = u_k \Phi_s + v_k \Phi_\theta \) as in $(1)$ and $(2)$ for the same $V\in \mathcal{C}_1$, and define \( f_k = u_k + i v_k \), for \( k = 1, 2 \). As we have shown above, both $f_1$ and $f_2$ satisfy the same equation \eqref{eq: system}, and therefore
 \( f_1 - f_2 = i c \), with \( c \in \mathbb{R} \). Hence,
\[
\Phi_*(Y_1^\top - Y_2^\top) = (u_1 - u_2) \Phi_s + (v_1 - v_2) \Phi_\theta = c \Phi_\theta.
\]

The final statement (item $(4)$) follows immediately from \textbf{Lemma} \ref{lemma: Relacao da variacao de energia e da area}.
\end{proof}

\begin{thmA}\label{Thm: Uniqueness_of_Mobius_band}
        Let \((\Sigma, g)\) be a Möbius band, and let \(\Phi = (\phi_0, \ldots, \phi_n): (\Sigma, g) \to \mathbb{B}^n(r)\) be a free boundary minimal immersion by frist Steklov eigenfunction. Then \(\Sigma\) is intrinsically rotationally symmetric. 
\end{thmA}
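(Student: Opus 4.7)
The plan is to apply the variational characterization of $\sigma_1(g)=\cot r$ to the angular derivatives $\partial_\theta\phi_i$ and force an equality whose equality case implies $\rho_\theta\equiv 0$. From the spherical cap embedding, $\phi_0|_{\partial\Sigma}=\cos r$ is constant, so \eqref{eq:free_boundary_condition} identifies $\sigma_0(g)=-\tan r$ (with the $\sigma_0$-eigenspace spanned by constants) and $\sigma_1(g)=\cot r$; $L^2$-orthogonality gives $\int_{\partial\Sigma}\phi_i=0$ for $i\ge 1$. Setting $u_i:=(\partial_\theta\phi_i)|_{\partial\Sigma}$, we have $u_0\equiv 0$ and $\int_{\partial\Sigma}u_i=0$ for $i\ge 1$, so each $u_i$ is admissible for the $\sigma_1$-variational problem. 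Since $\lambda_1(g)>2$, the standard frequency-$2$ Steklov inequality gives, for every extension $\tilde u$ of $u_i$,
\begin{equation*}
\int_\Sigma\bigl(|\nabla\tilde u|^2-2\tilde u^2\bigr)\,\dvg\ \ge\ \sigma_1(g)\int_{\partial\Sigma}u_i^2\,\dvl,
\end{equation*}
with equality iff $\tilde u$ is the frequency-$2$ harmonic extension of $u_i$ and $u_i$ is itself a $\sigma_1$-eigenfunction.

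I would then set $\tilde u=\partial_\theta\phi_i$, sum over $i=0,\ldots,n$, and evaluate the left-hand side using \eqref{eq: derivada angular fator conforma}. Integration by parts yields
\begin{equation*}
\sum_i \int_\Sigma\bigl(|\nabla\partial_\theta\phi_i|^2-2(\partial_\theta\phi_i)^2\bigr)\dvg=\sum_i\int_{\partial\Sigma}\partial_\theta\phi_i\cdot\partial_\nu\partial_\theta\phi_i\,\dvl+2\int_\Sigma\rho^{-1}\rho_\theta\sum_i\phi_i\partial_\theta\phi_i\,\dvg.
\end{equation*}
The interior correction vanishes via $\sum_i\phi_i\partial_\theta\phi_i=\tfrac12\partial_\theta(\sum_i\phi_i^2)=\tfrac12\partial_\theta(1)=0$. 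For the boundary integrand, the commutator $\partial_\nu\partial_\theta=\partial_\theta\partial_\nu+\tfrac12\rho^{-1}\rho_\theta\partial_\nu$ (from $\partial_\nu=\pm\rho^{-1/2}\partial_s$), combined with $\partial_\nu\phi_i=\cot r\,\phi_i$ for $i\ge 1$ and the vanishings $\partial_\theta\phi_0|_{\partial\Sigma}=0$ and $\sum_{i\ge 1}\phi_i\partial_\theta\phi_i|_{\partial\Sigma}=-\phi_0\partial_\theta\phi_0=0$, collapses everything to $\cot r\int_{\partial\Sigma}|\Phi_\theta|^2=\sigma_1(g)\int_{\partial\Sigma}|\Phi_\theta|^2$, matching the Steklov lower bound exactly. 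Hence equality must hold in each summand.

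In particular, the $i=0$ case forces $\int_\Sigma(|\nabla\partial_\theta\phi_0|^2-2(\partial_\theta\phi_0)^2)\,\dvg=0$; since $\partial_\theta\phi_0$ vanishes on $\partial\Sigma$ and $\lambda_1(g)>2$, this yields $\partial_\theta\phi_0\equiv 0$ on $\Sigma$. Plugging back into \eqref{eq: derivada angular fator conforma} for $i=0$ gives $\rho^{-1}\rho_\theta\phi_0\equiv 0$, and because $\phi_0\ge\cos r>0$ throughout $\Sigma$, we conclude $\rho_\theta\equiv 0$, i.e., $(\Sigma,g)$ is intrinsically rotationally symmetric. The main difficulty I anticipate is the boundary-commutator bookkeeping: one must verify that all $\rho_\theta$-corrections arising from $\partial_\nu\partial_\theta$ cancel after summing over $i$, which is precisely the mechanism that reduces the summed Steklov inequality to an equality. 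An alternative route more in the spirit of Proposition~\ref{Prop:ConformalIndex} and Lemma~\ref{Lemma: Null space of Q} would combine $\mathcal{I}(V_y,V_y)<0$ from Theorem~\ref{thm:index_estimate} with a nonnegativity estimate for $Q$ on variations built from first Steklov eigenfunctions; however the direct Rayleigh-quotient route above seems the most transparent.
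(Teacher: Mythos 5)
There is a genuine gap, and it sits exactly where you wave your hands: the claim that each $u_i=(\partial_\theta\phi_i)|_{\partial\Sigma}$ is admissible for the $\sigma_1$-variational problem. Admissibility requires $\int_{\partial\Sigma}u_i\,\dvl=0$ (orthogonality to the $\sigma_0$-eigenspace, which is spanned by the boundary-constant $\phi_0$). But $\dvl=\sqrt{\rho(s_0,\theta)}\,d\theta$ on the boundary circle, so
\[
\int_{\partial\Sigma}\partial_\theta\phi_i\,\dvl=\int \partial_\theta\phi_i\,\sqrt{\rho}\,d\theta=-\int \phi_i\,\partial_\theta\bigl(\sqrt{\rho}\bigr)\,d\theta,
\]
which has no reason to vanish unless $\rho_\theta\equiv 0$ --- the very conclusion you are trying to prove. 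It certainly does not follow from $\int_{\partial\Sigma}\phi_i\,\dvl=0$. Your argument therefore only establishes the theorem under the extra hypothesis $\int_{\partial\Sigma}\Phi_\theta\,\dvl=0$, which is precisely Case 1 of the paper's proof; your computation there (the vanishing of the interior correction via $\sum_i\phi_i\partial_\theta\phi_i=0$, and the cancellation of the $\rho_\theta$-terms in the boundary commutator using $\partial_\theta\phi_0|_{\partial\Sigma}=0$) matches the paper's, and the equality-case analysis via \eqref{eq: derivada angular fator conforma} correctly yields $\rho_\theta\equiv 0$ in that case.

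What you dismiss in your last sentence as an ``alternative route'' is in fact the indispensable other half of the argument. The paper's Case 2 removes the assumption $\int_{\partial\Sigma}\Phi_\theta\,\dvl=0$ as follows: using Proposition~\ref{Prop:ConformalIndex} it builds, for each $V$ in an $n$-dimensional space $\mathcal{C}_1$ of normal fields, a conformal deformation $Y(V)=V+Y^\top$ tangent to $\partial\mathbb{B}^n(r)$ with $Q(Y(V),Y(V))=\mathcal{I}(V,V)$; since every $W=Y(V)+c\Phi_\theta$ has vanishing $e_0$-component on $\partial\Sigma$, a rank--nullity count on the map $W\mapsto\int_{\partial\Sigma}W$ produces a nonzero $W$ with $\int_{\partial\Sigma}W\,\dvl=0$, whose components are then genuinely admissible test functions. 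Comparing the resulting inequality $Q(W,W)\ge 0$ with $Q(W,W)=\mathcal{I}(V,V)\le 0$ (Theorem~\ref{thm:index_estimate}) forces $V=0$, hence $\int_{\partial\Sigma}\Phi_\theta\,\dvl=0$, reducing to Case 1. Without this step your proof does not cover the general Möbius band.
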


\begin{proof}
We distinguish two cases:

\textbf{Case 1:} Suppose that
\[
\int_{\partial\Sigma} \frac{\partial \Phi}{\partial\theta} \,\dvl = 0.
\]
Then we may use \( \frac{\partial \phi_i}{\partial\theta} \) as a test function for \( \sigma_1 = \cot r \), for \( i = 1, \dots, n \). This gives
\begin{equation} \label{eq:testfunction_characterization_rotational}
\int_{\Sigma} \left|\nabla^g \frac{\partial \phi_i}{\partial\theta} \right|^2 - 2\left( \frac{\partial \phi_i}{\partial\theta} \right)^2 \,\dvg - \sigma_1 \int_{\partial \Sigma} \left( \frac{\partial \phi_i}{\partial\theta} \right)^2 \, \dvl \ge  0.
\end{equation}

Also, observe that \( \frac{\partial \phi_0}{\partial \theta} = 0 \) on \( \partial \Sigma \). Then, by Proposition 2 in \cite{LM23}, we have
\[
\int_{\Sigma} \left| \nabla^g \frac{\partial \phi_0}{\partial\theta} \right|^2 - 2\left( \frac{\partial \phi_0}{\partial\theta} \right)^2 \, \dvg \ge 0.
\]

On the other hand, \textbf{Lemma} \ref{Lemma: Null space of Q} gives \( Q(\Phi_\theta, \Phi_\theta) = 0 \). Computing the second variation of energy, we obtain:
\[
0 = Q(\Phi_\theta, \Phi_\theta) = \sum_{i=1}^n \left( \int_{\Sigma} \left| \nabla^g \frac{\partial \phi_i}{\partial\theta} \right|^2 - 2\left( \frac{\partial \phi_i}{\partial\theta} \right)^2 \, \dvg- \int_{\partial \Sigma} \frac{\partial \phi_i}{\partial\theta} \left\langle \frac{\partial \phi_i}{\partial\theta}, \nu \right\rangle \, \dvl\right).
\]

Now, recall that

\[
\left\langle \frac{\partial \phi_i}{\partial\theta}, \nu \right\rangle = \sigma_1 \frac{\partial \phi_i}{\partial\theta} + \rho^{-1/2} \frac{\partial \rho}{\partial\theta} \sigma_1 \phi_i.
\]

Since \( \left. \frac{\partial \phi_0}{\partial \theta} \right|_{\partial \Sigma} = 0 \) and \( \langle \Phi, \Phi_\theta \rangle = 0 \), we obtain
\[
\sum_{i=0}^n \int_{\partial \Sigma} \frac{\partial \phi_i}{\partial\theta} \left\langle \frac{\partial \phi_i}{\partial\theta}, \nu \right\rangle \, \dvl = -\sigma_1 \sum_{i=1}^n \int_{\partial \Sigma} \left( \frac{\partial \phi_i}{\partial\theta} \right)^2 \, \dvl.
\]

Hence, equality holds in \eqref{eq:testfunction_characterization_rotational}. Furthermore, equation \eqref{eq: derivada angular fator conforma} gives:
\[
\Delta_g \left( \frac{\partial \phi_i}{\partial \theta} \right) + 2\rho^{-1} \frac{\partial \rho}{\partial \theta} \phi_i + 2\frac{\partial \phi_i}{\partial \theta} = 0, \quad \text{for } i = 0, \dots, n.
\]
We conclude that \( \rho^{-1} \frac{\partial \rho}{\partial \theta} \phi_i = 0 \) on $ \Sigma$ for all \( i = 1, \dots, n \), hence \( \frac{\partial \rho}{\partial \theta} = 0 \).

\textbf{Case 2:} Suppose now that the vector
\[
\int_{\partial \Sigma} \frac{\partial \Phi}{\partial \theta}\d l_g,
\]
is arbitrary. By \textbf{Proposition \ref{Prop:ConformalIndex}}, for any \( V \in \mathcal{C}_1 \), there exists a conformal vector field \( Y(V) = V + Y^\top \), whose normal component is \( V \). Moreover, the field \( Y(V) \) is uniquely determined if we impose the orthogonality condition
\[
\left\langle \int_{\partial \Sigma} Y(V), \int_{\partial \Sigma} \Phi_\theta \right\rangle = 0,
\]
which follows from item (3) of \textbf{Proposition \ref{Prop:ConformalIndex}}. The map \( V \mapsto Y(V) \in \Gamma(\Phi^* T\mathbb{S}^n) \) is well-defined and linear.

In addition, since \( Y^\top \) is tangent to \( \partial \mathbb{B}^n(r) \) along \( \partial \Sigma \), it follows from the free boundary condition that \( Y(V)(p) \in T_p \partial \mathbb{B}^n(r) \) for every \( p \in \partial \Sigma \). Thus, on these points,
\[
0 = \langle Y(V), \nu \rangle = \left\langle Y(V), \frac{1}{\sin r}(\cos r \, p - e_0) \right\rangle \quad \Rightarrow \quad \langle Y(V), e_0 \rangle = 0
\]
Define the vector space
\[
\mathcal{W} = \{ Y(V) + c\Phi_\theta \mid V \in \mathcal{C}_1,\; c \in \mathbb{R} \}.
\]
Note that \( \dim \mathcal{W} \geq  n + 1 \). Furthermore, for all \( W \in \mathcal{W} \), the first coordinate of \( W \) vanishes along \( \partial \Sigma \), i.e.
\[
W_0 = \langle Y(V) + c\Phi_\theta, e_0 \rangle = 0.
\]

Define the linear map \( T: \mathcal{W} \to \mathbb{R}^{n+1} \) by
\[
T(W) = \int_{\partial \Sigma} W = \left( \int_{\partial \Sigma} W_0, \dots, \int_{\partial \Sigma} W_n \right).
\]
Since \( W_0 = 0 \), it follows that \( \operatorname{Im}(T) \subseteq \{0\} \times \mathbb{R}^n \). By the rank–nullity theorem, there exists \( V \in \mathcal{C}_1 \), non-both zero, and \( c \in \mathbb{R} \) such that
\begin{equation} \label{eq:testfunction_uniqueness}
T\left( Y(V) + c\Phi_\theta \right) = \int_{\partial \Sigma} Y(V) + c\Phi_\theta \, \dvl = 0.
\end{equation}

By \textbf{Lemmas} \ref{lemma: Relacao da variacao de energia e da area} and \ref{Lemma: Null space of Q}, we have
\[
Q(Y(V) + c\Phi_\theta, Y(V) + c\Phi_\theta) = Q(Y(V), Y(V)) = \mathcal{I}(V, V).
\]

Now let us write \( V = a V_y + b \partial_0^\perp \), with \( a, b \in \mathbb{R} \), and \( y \perp e_0 \). Then, by Theorem \ref{thm:index_estimate} and the equation \eqref{eq:Jacobi_operator_on_V_y}  done in Section \ref{sec: index estimate}, and the fact that \(\partial_0^\perp|_{\partial\Sigma}=0\) and $V_y|_{\partial\Sigma}$ is tangent to \(\partial\Br \) Then the boundary terms in \( \mathcal{I}(V_y,\partial_0^\perp)\) and  \( \mathcal{I}(\partial_0^\perp,\partial_0^\perp)\) vanish, we get 
\begin{align*}
\mathcal{I}(V, V) &= a^2 \mathcal{I}(V_y^\perp, V_y^\perp) + 2ab \mathcal{I}(V_y^\perp, \partial_0^\perp) + b^2 \mathcal{I}(\partial_0^\perp, \partial_0^\perp) \\
&= a^2 \int_{\Sigma} -k c^2(r) |\partial_y^\perp|^2\dvg + 2ab \int_{\Sigma} \langle -L_\Sigma V_y^\perp, \partial_0^\perp \rangle\dvg \\
& \quad + b^2 \int_{\Sigma} \langle -L_\Sigma \partial_0^\perp, \partial_0^\perp \rangle\dvg \\
&=\int_\Sigma -k a^2 c^2(r) \|\partial_y\|^2-k2abc(r)\langle\partial_y^\perp,\partial_0^\perp\rangle-kb^2\|\partial_0^\perp\|^2\dvg \\
&= -k \int_{\Sigma} \left\| a c(r)\partial_y^\perp + b \partial_0^\perp \right\|^2 \dvg\le 0.
\end{align*}
Thus, \( Q(Y(V) + c\Phi_\theta, Y(V) + c\Phi_\theta) \le 0 \). 

Define \( f_i = \langle Y(V) + c\Phi_\theta, e_i \rangle \), for \( i = 0, \dots, n \). By equation \eqref{eq:testfunction_uniqueness}, the components \( f_i \), for \( i = 1, \dots, n \), are test functions for \( \sigma_1 = \cot r \). Hence,
\[
\int_{\Sigma} |\nabla^g f_i|^2 - 2 f_i^2 \, \dvg + \cot r \int_{\partial \Sigma} f_i^2 \, \dvl \ge 0.
\]

Moreover, since \( f_0(p) = 0 \) for all \( p \in \partial \Sigma \), Proposition 2 of \cite{LM23} implies:
\[
\int_{\Sigma} |\nabla^g f_0|^2 - 2 f_0^2 \, \dvg \ge 0.
\]

Therefore,
\begin{align*}
Q(Y(V) + c\Phi_\theta, Y(V) + c\Phi_\theta) &= \sum_{i = 0}^n \left( \int_{\Sigma} |\nabla^g f_i|^2 - 2 f_i^2 \, 
\dvg+ \cot r \int_{\partial \Sigma} f_i^2 \, \dvl \right) \\
&\ge 0.
\end{align*}

But we also had \( I(V,V)= Q(Y(V) + c\Phi_\theta, Y(V) + c\Phi_\theta) \le 0 \), so equality must hold. By \textbf{Theorem~\ref{thm:index_estimate}}, $V=0$. In particular, by \eqref{eq:testfunction_uniqueness},
\[
\int_{\partial \Sigma} \Phi_\theta \dvl = 0,
\]
which reduces us to the situation already treated in Case 1.
\end{proof}

\section{Free boundary minimal Möbius band in spherical caps} \label{sec:ODE}

For $0<r\le \frac{\pi}{2}$, consider the spherical cap $\mathbb{B}^n(r)\subset \mathbb{S}^n$. We aim to construct an example of a free boundary minimal Möbius band in $\mathbb{B}^n(r)$. Free boundary minimal surfaces are closely related to spectral problems. Specifically, as shown in \textbf{Theorem A}, if this surface is immersed by its first Steklov eigenfunction, then it must be intrinsically rotationally symmetric. This symmetry reduces the problem of finding such surfaces to the problem of solving a system of ordinary differential equations satisfying specific boundary conditions.

\subsection{Rotational symmetry} Let $s_0 > 0$. Consider the annulus  $[-s_0, s_0] \times \mathbb{S}^1 $  equipped with the flat metric $ g_0 = ds^2 + d\theta^2 $, and define the equivalence relation $ (s, \theta) \sim (-s, \theta + \pi)$. The quotient space $[-s_0, s_0] \times \mathbb{S}^1/\sim$ is diffeomorphic to the Möbius band, and the metric $g_0$ passes down to it as a flat metric. 

Any Riemannian Möbius band $(\mathbb{M},g)$ is conformally equivalent to the quotient space $([-s_0, s_0] \times \mathbb{S}^1 / \sim ,g_0)$, for some $s_0>0$. That is, there exist a modulus $s_0> 0$ and a positive function $ \rho \colon [-s_0, s_0] \times \mathbb{S}^1 \to \mathbb{R}$ satisfying the symmetry condition $\rho (s, \theta) = \rho(-s, \theta + \pi)$, such that the metric $g$ can expressed as $g = \rho g_0$. We say \( g \) is intrinsically rotationally symmetric if \( \rho \) depends only on \( s \), i.e. $\frac{\partial \rho}{\partial \theta} = 0 \; \text{(equivalently, } \rho (s, \theta) = \rho (s) \text
{)}$.

Let \( \Phi: (\mathbb{M}, g) \to \mathbb{B}^r(r)\subset \mathbb{S}^n \) be an immersion, where \( \Phi = (\phi_0, \dots, \phi_n) \) and \( g = \Phi^* \text{can}_{\mathbb{S}^n} \). Recall that
 the surface \( \Phi(\mathbb{M}) \) is minimal if and only  the coordinate function satisfies the equation \eqref{eq:minimal_condition}  and is   free boundary if and only if satisfies the \eqref{eq:free_boundary_condition}  on the boundary. 

Assuming \( \mathbb{M} \) is intrinsically  rotationally symmetric, we can use separation of variables in order to solve this boundary value problem. In fact, under this symmetry, the partial differential equations \eqref{eq:minimal_condition} and the boundary conditions \eqref{eq:free_boundary_condition} reduce to a system of ODEs  with appropriate boundary conditions.

\begin{prop}\label{prop:Characterization_of_FB_minimal_mobius_band}
    Let  \( 0 < r \leq \frac{\pi}{2} \), \( a\in (0,1) \) and \( s_r > 0 \). Consider functions \( y \colon [-s_r, s_r] \to \mathbb{R} \) and \( z \colon [-s_r, s_r] \to \mathbb{R} \) satisfying the following system of ordinary differential equations:
\begin{equation}\label{eq:System_of_ODEs_FBMMB}
  \begin{cases}
      y''(s) = \big(1 - 2y^2(s) - 8z^2(s)\big) y(s), & y(0) = 0, \ y'(0) = 2a, \\
      z''(s) = \big(4 - 2y^2(s) - 8z^2(s)\big) z(s), & z(0) = a, \ z'(0) = 0,
  \end{cases}
\end{equation}
with the following additional properties:
\begin{itemize}
    \item[$(i)$] The function \( y(s) \) is odd and vanishes only at \( s = 0 \) on the interval \([-s_r,s_r]\) , while \( z(s) \) is even on the interval and never vanish.
    \item[$(ii)$] \( y^2(s) + z^2(s) \leq \sin^2 r \) for all \( s \in [-s_r, s_r] \);
    \item[$(iii)$] Defining \( x(s) = \sqrt{1 - y^2(s) - z^2(s)} \), the following boundary conditions hold:
    \begin{itemize}
        \item[(a)] \( x(-s_r) = x(s_r) = \cos r \),
        \item[(b)] \( x'(\pm s_r) = - \sin r \sqrt{y^2(\pm s_r) + 4z^2(\pm s_r)}  \).
    \end{itemize}
\end{itemize}

Then, the map \( \Phi_{(a,r)} \colon [-s_r, s_r] \times \mathbb{S}^1/\sim \, \to \mathbb{B}^4(r) \) defined by
\begin{equation}\label{eq:Phi_definition}
        \Phi_{(a,r)}(s, \theta) = \big( 
        x(s), \ y(s)\cos\theta, \ y(s)\sin\theta, \ z(s)\cos 2\theta, \ z(s)\sin 2\theta 
    \big),
\end{equation}
is intrisically rotationally symmetric free boundary minimal immersion of a  Möbius band, where $x(s)$ is a $\sigma_0$-Steklov eigenfunction and the other coordinate functions $y(s)\cos\theta, y(s)\sin\theta, z(s)\cos 2\theta, z(s)\sin 2\theta$ are $\sigma_1$-Steklov eigenfunction.
\end{prop}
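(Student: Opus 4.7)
The plan is to verify the six asserted properties of $\Phi = \Phi_{(a,r)}$ in sequence: well-definedness on the quotient, image in $\mathbb{B}^4(r)$, intrinsic rotational symmetry of the induced metric, minimality, the free boundary condition, and the first Steklov eigenfunction property. Well-definedness follows from the parities of $y$ (odd), $z$ (even), hence $x = \sqrt{1-y^2-z^2}$ (even) from condition $(i)$, combined with $\cos(\theta+\pi) = -\cos\theta$, $\sin(\theta+\pi) = -\sin\theta$ and the $\pi$-invariance of $\cos 2\theta, \sin 2\theta$, which together yield $\Phi(-s, \theta+\pi) = \Phi(s,\theta)$. The image lies in $\mathbb{S}^4$ by construction and in $\mathbb{B}^4(r)$ by condition $(ii)$.

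The technical heart of the proof is the computation of the pullback metric. A direct calculation gives $|\partial_\theta\Phi|^2 = y^2 + 4z^2 =: \rho$, $\langle\partial_s\Phi,\partial_\theta\Phi\rangle = 0$, and $|\partial_s\Phi|^2 = (x')^2 + (y')^2 + (z')^2$; the isothermal property reduces to the conservation law $E(s) := (x')^2 + (y')^2 + (z')^2 - \rho \equiv 0$. To establish $E \equiv 0$, I would use the ODEs in \eqref{eq:System_of_ODEs_FBMMB} to compute $yy'' + zz'' = \rho(2x^2 - 1)$, combine this with the twice-differentiated constraint $x^2+y^2+z^2=1$ to obtain $xx'' = -E - 2\rho x^2$, and deduce $(x^2 E)' = 0$. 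Since $E(0) = (x'(0))^2 = 0$ (the last equality coming from the once-differentiated constraint at $s=0$), we conclude $E \equiv 0$, and thus $g = \rho(s)(ds^2 + d\theta^2)$, establishing intrinsic rotational symmetry.

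Given these isothermal coordinates, minimality of $\Phi$ into $\mathbb{S}^4$ amounts to $\partial_s^2\phi_i + \partial_\theta^2\phi_i + 2\rho\phi_i = 0$ for each coordinate: for $\phi_1, \phi_2$ this reproduces the first ODE in \eqref{eq:System_of_ODEs_FBMMB}, for $\phi_3, \phi_4$ the second, and for $\phi_0 = x$ it reads $x''= -2\rho x$, which follows from $E = 0$ via $xx'' = -2\rho x^2$. For the free boundary condition at $s = s_r$, I would verify that the unit conormal $\rho^{-1/2}\partial_s\Phi$ matches the spherical-cap outward normal $\sin^{-1}r(\cos r\,\Phi - e_0)$ from \eqref{eq:Unity_normal_to_spherical_cap}: the first coordinate reproduces $(iii)(b)$, and the remaining coordinates reduce to the $2\times 2$ system $(y')^2 + (z')^2 = \cos^2 r \cdot \rho$ and $yy' + zz' = \sin r\cos r\sqrt{\rho}$ at $s_r$, which, using $y(s_r)^2 + z(s_r)^2 = \sin^2 r$ (a consequence of $x(s_r) = \cos r$), has the unique solution $y'(s_r) = \cot r\,y(s_r)\sqrt{\rho(s_r)}$, $z'(s_r) = \cot r\,z(s_r)\sqrt{\rho(s_r)}$. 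The case $s = -s_r$ follows by parity.

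Minimality together with the free boundary condition, read through \eqref{eq:minimal_condition} and \eqref{eq:free_boundary_condition}, show that $\phi_0$ is a Steklov (frequency $2$) eigenfunction of eigenvalue $-\tan r$ and $\phi_1, \dots, \phi_4$ are eigenfunctions of eigenvalue $\cot r$. The main obstacle I anticipate is identifying these as precisely the first two eigenvalues $\sigma_0$ and $\sigma_1$: since $\phi_0 = x$ is strictly positive on the interior (because $y^2 + z^2 \leq \sin^2 r < 1$ forces $x \geq \cos r > 0$), it realizes the simple ground state $\sigma_0 = -\tan r$, but pinning down $\sigma_1 = \cot r$ requires a further multiplicity or nodal-count argument to rule out the existence of eigenvalues strictly between $-\tan r$ and $\cot r$.
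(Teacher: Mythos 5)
Your proposal is correct and complete up to the last step, and on two intermediate steps you take routes that differ from (and are arguably more self-contained than) the paper's. For the isothermal property $(x')^2+(y')^2+(z')^2=y^2+4z^2$, the paper imports the two explicit first integrals $H_1,H_2$ of El Soufi--Giacomini--Jazar and manipulates them; your conservation-law argument, setting $E=(x')^2+(y')^2+(z')^2-\rho$ and checking $(x^2E)'=0$ with $E(0)=4a^2-4a^2=0$, is valid (I verified $E'=-2(x'/x)E$ from the ODEs and the differentiated constraint) and avoids the external input entirely. For the free boundary condition, the paper simply computes $\|N_{\partial\mathbb{B}^4(r)}-\rho^{-1/2}\partial_s\Phi\|^2=2+2x'(s_r)/(\sin r\sqrt{\rho})$, which vanishes exactly under hypothesis $(iii)(b)$; your coordinatewise verification via the Cauchy--Schwarz equality case in the system $(y')^2+(z')^2=\cos^2 r\,\rho$, $yy'+zz'=\sin r\cos r\sqrt{\rho}$ also works but is longer.

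The genuine gap is the final claim that $\cot r$ is precisely $\sigma_1(g)$, which you explicitly flag as "the main obstacle" but do not prove. This is not a routine afterthought: it occupies roughly half of the paper's proof. The paper's argument runs as follows. By Courant's nodal domain theorem a $\sigma_1$-eigenfunction has exactly two nodal domains, and by separation of variables on the rotationally symmetric metric $\rho(s)(ds^2+d\theta^2)$ the $\sigma_1$-eigenspace lies in the span of $\phi_0(s),\ \phi_1(s)\cos\theta,\ \phi_1(s)\sin\theta,\ \phi_2(s)\cos 2\theta,\ \phi_2(s)\sin 2\theta$ with $\phi_k''=(k^2-2\rho)\phi_k$. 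The $k=0$ mode is excluded because the Möbius symmetry forces $\phi_0$ even, hence $\phi_0'(0)=0$; writing the general solution via Abel's formula as $a\,x(s)+b\,x(s)\int_0^s x^{-2}$ then forces $b=0$, so $\phi_0$ is a nonvanishing multiple of $x$ and cannot have the two required zeros. For $k=1$, a second solution $w=y\int y^{-2}$ (built by reduction of order away from $s=0$) is shown to satisfy the Steklov boundary condition with eigenvalue $\cot r+\mu$ for some $\mu>0$, so any radial part $\phi_1=ay+bw$ with $b\neq 0$ corresponds to a strictly larger eigenvalue; hence the $\sigma_1$-radial part is a multiple of $y$ and $\sigma_1=\cot r$. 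Without some argument of this type, your proof establishes that $\Phi$ is a free boundary minimal immersion by Steklov eigenfunctions of eigenvalues $-\tan r$ and $\cot r$, but not that it is an immersion by \emph{first} Steklov eigenfunctions, which is part of the statement.
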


\begin{proof}
Observe that $x$ is an even function and that, by definition, $\Phi(-s,\theta+\pi)=\Phi(s,\theta)$, so that $\Phi$ descends to the Möbius band $\mathbb{M}=[-s_0,s_0]\times \mathbb{S}^1/\sim$. Moreover, also by construction,
\[
\|\Phi_{(a,R)}(s,\theta)\|^2 = x^2 + y^2(\sin^2{\theta} + \cos^2{\theta}) + z^2(\sin^2{2\theta} + \cos^2{2\theta}) = 1.
\]
Therefore, the image of \(\Phi\) lies on the unit sphere. Notice that condition $(ii)$ implies that the image of \(\Phi\) is contained in the spherical cap \(B^n(r)\). 

Let \(g = \Phi^* \canSn\) be the pullback of the canonical metric on the sphere. We compute:
\[
g_{12} = g_{21} = \left\langle \frac{\partial \Phi}{\partial s}, \frac{\partial \Phi}{\partial \theta} \right\rangle = 0,
\]
\[
g_{11} = \left\langle \frac{\partial \Phi}{\partial s}, \frac{\partial \Phi}{\partial s} \right\rangle = (x')^2 + (y')^2 + (z')^2,
\]
\[
g_{22} = \left\langle \frac{\partial \Phi}{\partial \theta}, \frac{\partial \Phi}{\partial \theta} \right\rangle = y^2 + 4z^2.
\]
In the next section, we prove that \((x')^2 + (y')^2 + (z')^2 = y^2 + 4z^2\)  is a positive function, see equation~\eqref{eq: y^2+4z^2=x'^2+y'^2+z'^2}. Therefore, \(g\) is conformally equivalent to the flat rotationally symmetric metric of the form
\[
g = \rho(s)(ds^2 + d\theta^2),
\]
for $\rho(s)=y^2+4z^2$. Consequently, the Laplace--Beltrami operator associated with \(g\) is given by
\[
\Delta_g = \frac{1}{\rho(s)}\left( \frac{\partial^2}{\partial s^2} + \frac{\partial^2}{\partial \theta^2} \right).
\]

Observer that $x(s)=\sqrt{1-y^2(s)-z^2(s)}$ also satisfies a second-orderm ODE. In fact deriving twice  $x^2+y^2+z^2=1$ we obtain
\begin{align*}
   x''(s)x(s)&=-(x'(s)^2+y'(s)^2+z'(s)^2)-y''(s)y(s)-z''(s)z(s),\\
    &=-\rho(s)-(1-2\rho(s))y^2(s)-(4-2\rho(s))z^2(s),\\
    &=-y^2(s)-4z^2(s)+\rho(s)(-1+2y^2(s)+2z^2(s)),\\
    &=-2\rho(s)(-1+y^2+z^2),\\
    &=-2\rho(s)x^2(s),
\end{align*}
Then
\begin{equation}\label{eq:ode_for_x}
     x''(s)=-2\rho(s)x(s)    
\end{equation}

The sistem of  ODE  \eqref{eq:System_of_ODEs_FBMMB} and \eqref{eq:ode_for_x}  implies that  the component functions of \(\Phi\) satisfies the Laplace equation with frequency $2$ on $\Sigma$. In fact,
\begin{align*}
    (\Delta_g + 2)(y(s)\cos{\theta}) 
    &= \frac{1}{\rho(s)} \left( \frac{\partial^2}{\partial s^2} \left( y(s)\cos{\theta} \right) + \frac{\partial^2}{\partial \theta^2} \left( y(s)\cos{\theta} \right) \right) + 2y(s)\cos{\theta} \\
    &= \frac{\cos{\theta}}{\rho(s)}\left( y'' - y + 2\rho y \right) \\
    &= \frac{\cos{\theta}}{\rho(s)}\left( y'' - (1 - 2y^2 - 8z^2)y \right) \\
    &= 0.
\end{align*}

Similarly, the same holds for each of the functions $x(s),y(s)\sin{\theta}, z(s)\cos{2\theta},$ and $z(s)\sin{2\theta}$. Then $\Phi:\mathbb{M}\to \mathbb{B}^4(r)$ is minimal.

We now verify that \(\Phi\) defines a free boundary immersion. This occurs when the outward unit normal to the boundary of the ball, given by
\[
N_{\partial \mathbb{B}^n(R)} = \frac{1}{\sin r}(\cos r \, \Phi - e_0),
\]
coincides with the outward unit conormal to the boundary of the immersed surface \(\Phi(\partial \mathbb{M})\), which is given by
\[
\Phi_*(\nu_{\partial \mathbb{M}}) = \frac{1}{\sqrt{\rho(s)}} \frac{\partial \Phi}{\partial s}.
\]
Therefore, the free boundary condition is satisfied if and only if
\begin{align*}
    0 &= \left\| N_{\partial \mathbb{B}^n(R)} -\Phi_*(\nu_{\partial \mathbb{M}}) \right\|^2 \\
      &= 2 - 2 \left\langle \frac{1}{\sin r}(\cos r \, \Phi - e_0), \frac{1}{\sqrt{\rho(s)}} \frac{\partial \Phi}{\partial s} \right\rangle \\
      &= 2 + 2 \frac{x'(s_r)}{\sin r \sqrt{y^2(s_r) + 4z^2(s_r)}}.
\end{align*}

Thus, condition $(iii)$ guarantees that \(\Phi\) is a free boundary minimal immersion. 

Since \(x(s) > 0\), it is straightforward to verify that \(x(s)\) is a Steklov eigenfunction associated to the eigenvalue \(\sigma_0 = -\tan(r)\).
On the other hand, we checked above that
\[
\{ y(s)\cos\theta, \ y(s)\sin\theta, \ z(s)\cos 2\theta, \ z(s)\sin 2\theta \}
\]
are Steklov eigenfunctions with eigenvalue \(\sigma = \cot r\), by \eqref{eq:free_boundary_condition}. 

In order to finish the proof of the Proposition, it remains to show that \(\cot  r = \sigma_1\), where \(\sigma_1\) denotes the second Steklov eigenvalue.

It is known, by Courant nodal set theorem, that eigenfunctions of $(\mathbb{M},\rho g_0)$  corresponding to \(\sigma_1\) must have exactly two nodal domains . Moreover, by the separation of variables argument, the corresponding eigenspace is  contained in the space generated by the set
\[
\{ \phi_0(s),\ \phi_1(s)\cos\theta,\ \phi_1(s)\sin\theta,\ \phi_2(s)\cos 2\theta,\ \phi_2(s)\sin 2\theta \},
\]
where $\phi_k$ must satisfy the equation,
\begin{equation}\label{eq:equation_for_phi_k}
    \phi_k''(s)=(k^2-2\rho(s))\phi_k(s),
\end{equation}
Moreover \(\phi_0\) has two zeros in \([-s_r, s_r]\), \(\phi_1\) has exactly one zero in \([-s_r, s_r]\), and \(\phi_2\) not vanish in the interval.

Although this eigenspace is a priori spanned by five eigenfunctions, we will demonstrate that it is, in fact, generated by only four. To establish this, we prove the nonexistence of a $\sigma_1$-Steklov eigenfunction  on the Möbius strip that is independent of the  variable $\theta$. 

Observe that, by the invariance under the deck transformation, we have \(\phi_0(s) = \phi_0(-s)\) for every $s\in [-s_0,s_0]$, so that \(\phi_0\) is even and consequently satisfies \(\phi_0'(0) = 0\). Also, we know that \(x(s)\) is a positive solution \eqref{eq:equation_for_phi_k}, for $k=0$. By Abel's formula, a second linearly independent solution of this equation is given by
\[
\phi(s) = x(s)\int_{0}^s \frac{1}{x^2(t)}\,dt.
\]
Hence, any solution can be written in the form
\[
\phi_0(s) = a\, x(s) + b\, x(s)\int_{0}^s \frac{1}{x^2(t)}\,dt.
\]
where $a,b\in\R$. We compute
\[
\phi_0'(0) = a\, x'(0) + b \left( x'(0) \int_{0}^{0} \frac{1}{x^2(t)}\,dt + \frac{1}{x(0)} \right) = \frac{b}{x(0)}.
\]
Since $\phi_0'(0)=0$, it follows that \(b = 0\), in which case \(\phi_1(s) = a\,x(s)\), which does not vanish if $a\neq 0$. But this contradicts the fact that $\phi_0$ has two zeros on $[-s_r,s_r]$. 

Consequently, the eigenspace associated  Steklov eigenvalue \(\sigma_1\) is actually contained in the space spanned by the functions
\[
\left\{\, \phi_1(s)\cos\theta,\ \phi_1(s)\sin\theta,\ \phi_2(s)\cos 2\theta,\ \phi_2(s)\sin 2\theta \,\right\}.
\]

Furthermore, away from the point \(s = 0\), we can also construct a second linearly independent solution  for e\eqref{eq:equation_for_phi_k} for $k=1$, using Abel’s formula. Given that \(y(s)\) is a known solution, that vanishes only on $s=0$ a second solution can be explicitly find outside the origin, in fact, given $\varepsilon>0$ we have that 
\[
w(s) = y(s) \int_{\varepsilon}^{s} \frac{1}{y^2(t)}\,dt
\]
is a second solution of \eqref{eq:equation_for_phi_k}, for $k=1$. 
Now we verify that \(w(s)\) defines a Steklov eigenfunction. Computing the quotient of its normal derivative and its value at the boundary point \(s = s_r\), we obtain:
\begin{align*}
    \frac{1}{w(s_r)}\frac{\partial w(s_r)}{\partial \nu} 
    &= \frac{1}{\sqrt{\rho(s_r)}} \cdot \frac{w'(s_r)}{w(s_r)} \\
    &= \frac{1}{\sqrt{\rho(s_r)}} \cdot \left( \frac{y'(s_r) \int \frac{1}{y^2(t)}\,dt + \frac{1}{y(s_r)}}{y(s_r) \int \frac{1}{y^2(t)}\,dt} \right) \\
    &= \cot r + \frac{1}{\sqrt{\rho(s_r)}\, y^2(s_r) \int \frac{1}{y^2(t)}\,dt }.
\end{align*}

Note that the normal derivative \(\frac{\partial}{\partial \nu}\) is given by \(\frac{1}{\sqrt{\rho}}\, \partial_s\) at the boundary component \(\{s_r\} \times S^1\), and by \(-\frac{1}{\sqrt{\rho}}\, \partial_s\) at \(\{-s_r\} \times S^1\). Then we have
\[
\frac{\partial  w(\pm s_r)}{\partial \nu} = (\mu + \cot r)\, w(\pm s_r),
\]
where \(\mu = \frac{1}{\sqrt{\rho(s_r)}\, y^2(s_r) \int_{\epsilon}^{s_r} \frac{1}{y^2(t)}\,dt} > 0.\)
This confirms that \(w\) is a Steklov eigenfunction with the eigenvalue \(\sigma = \cot r + \mu\), which is strictly greater than \(\cot r\). Finally, observe that $\phi_1$, as a linear combination $\phi_1(s)=ay(s)+bw(s)$, satisfies
\begin{align*}
    \frac{\partial\phi_1}{\partial \nu}(\pm s_r)&=a\cot{r}  y(\pm s_r)+b(\mu+\cot r) w(\pm s_r),\\
    &= \cot r \phi_1(\pm s_r)+b\mu w(\pm s_r).
\end{align*}
If $b \neq 0$, then we must have $a = 0$. In this case, $\phi_1$ is a Steklov eigenfunction corresponding to an eigenvalue greater than $\cot r$, and consequently cannot be the second eigenfunction. It follows that $\phi_1$ must be a multiple of $y$. In particular, $\sigma_1=\cot R$, as we wanted to show.

\end{proof}

Conversely, if we have a intrinsically rotationally symmetric free boundary immersion of a M\"obius band in \( \Br \) given by first Steklov eigenfunctions,  we can prove that the coordinate functions  satisfy the condition of \textbf{Proposition \ref{prop:Characterization_of_FB_minimal_mobius_band}}. In fact, we have the following converse: 

\begin{prop}\label{thm:FBMMB-Steklov}
Let \(\Phi = (\phi_0, \ldots, \phi_n): (\mathbb{M}, g) \to \mathbb{B}^n(r)\) be a free boundary minimal immersion of a M\"obius band by first Steklov eigenfunctions. Then \((\mathbb{M}, g)\) is intrinsically rotationally symmetric, \(n = 4\), and, up to an isometry of \(\mathbb{B}^n(r)\), the coordinate functions are given by 
\[
\begin{aligned}
\phi_0(s,\theta) &= \psi_0(s), \\
\phi_1(s,\theta) &= \psi_1(s)\cos \theta,\quad\quad \phi_2(s,\theta) = \psi_1(s)\sin \theta, \\
\phi_3(s,\theta) &= \psi_2(s)\cos (2\theta), \quad
\phi_4(s,\theta) = \psi_2(s)\sin (2\theta),
\end{aligned}
\]
where the functions \(\phi_0\), \(\psi_1\), and \(\psi_2\) satisfy the system of ODEs
\begin{equation*}
  \begin{cases}  
      \psi_0'' = -\big(2\psi_1^2 + 8\psi_2^2\big)\psi_0,  &\psi_0(0)=\sqrt{1-a^2}, \psi_0'(0)=0\\
      \psi_1' = \big(1 - 2\psi_1^2- 8\psi_2^2\big) \psi_1,  &\psi_1(0) = 0,\psi_1'(0) = 2a, \\
      \psi_2'' = \big(4 - 2\psi_1^2- 8\psi_2^2\big) \psi_2,  &\psi_2(0) = a,\psi_2'(0) = 0,
  \end{cases}
\end{equation*}
were $0<a<1$, with the following additional properties:
\begin{itemize}
    \item[(i)] The function \(\psi_0(s)\) is even and positive, \(\psi_1(s)\) is odd and vanishes only at \(s = 0\) on \([-s_r, s_r]\), while \(\psi_2(s)\) is even and never vanishes on the interval.
    \item[(ii)] \(\psi_1^2(s) + \psi_2^2(s) \leq \sin^2 r\) for all \(s \in [-s_r, s_r]\).
    \item[(iii)] The boundary conditions hold:
    \begin{itemize}
        \item[(a)] \(\psi_0(-s_r) = \psi_0(s_r) = \cos r\),
        \item[(b)] \(\psi_0'(\pm s_r) = -\sin r \sqrt{\psi_1^2(\pm s_r) + 4\psi_2^2(\pm s_r)}\).
    \end{itemize}
\end{itemize}
\end{prop}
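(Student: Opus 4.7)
The plan is to combine the intrinsic rotational symmetry granted by Theorem A with a $\theta$-Fourier decomposition of the coordinate functions and an orthogonal normalization. By Theorem A, the M\"obius band $(\mathbb{M}, g)$ is intrinsically rotationally symmetric, so on the orientable double cover $[-s_r, s_r]\times \mathbb{S}^1$ the metric has the form $g = \rho(s)(ds^2 + d\theta^2)$, with $\rho(-s) = \rho(s)$ by invariance under the deck transformation. The Laplacian becomes $\Delta_g = \rho^{-1}(\partial_s^2 + \partial_\theta^2)$, so the equation $\Delta_g\phi + 2\phi = 0$ separates in $\theta$: any eigenfunction decomposes as $\sum_{k\geq 0}\psi_k(s)(a_k\cos k\theta + b_k\sin k\theta)$ with $\psi_k'' + (2\rho - k^2)\psi_k = 0$. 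The deck symmetry forces $\psi_k$ even when $k$ is even and odd when $k$ is odd, and the Steklov condition at $s = \pm s_r$ becomes $\psi_k'(\pm s_r) = \pm\sigma\sqrt{\rho(s_r)}\psi_k(\pm s_r)$.

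The first step is to identify $\phi_0$ as purely radial. Since $\phi_0\equiv\cos r$ on $\partial\mathbb{M}$, its boundary trace is constant and hence supported only in the $k = 0$ Fourier mode; simplicity of $\sigma_0$ together with the sign-definiteness of its eigenfunction on the boundary forces $\phi_0$ itself to be $\theta$-independent, so $\phi_0 = \psi_0(s)$ with $\psi_0$ even. For $i\geq 1$ each $\phi_i$ is a $\sigma_1$-eigenfunction whose nonzero angular modes are themselves $\sigma_1$-eigenfunctions of the corresponding mode ODE. The Abel-formula argument from the proof of Proposition \ref{prop:Characterization_of_FB_minimal_mobius_band} excludes the $k = 0$ contribution from the $\sigma_1$-eigenspace, and a Courant-type nodal-domain count on the M\"obius band rules out the modes $k\geq 3$ and forces $\psi_1$ to vanish only at $s = 0$ and $\psi_2$ to be nowhere zero.

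Hence each $\phi_i$, $i\geq 1$, lies in the four-dimensional space spanned by $\{\psi_1\cos\theta,\ \psi_1\sin\theta,\ \psi_2\cos 2\theta,\ \psi_2\sin 2\theta\}$. Writing the coefficients of the $\phi_i$ in this basis as an $n\times 4$ matrix $M$, the constraint $\|\Phi\|^2 = 1$ becomes $\psi_0^2 + \sum_{i\geq 1}\phi_i^2 = 1$; expanding and demanding that all nonconstant $\theta$-harmonics vanish forces the Gram matrix $M^\top M$ into the block form $\operatorname{diag}(\lambda_1 I_2,\lambda_2 I_2)$ with $\lambda_1, \lambda_2 > 0$. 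After absorbing $\sqrt{\lambda_j}$ into the definition of $\psi_j$ one has $M^\top M = I_4$, and an orthogonal change of coordinates in $\mathbb{R}^n$ (an isometry of $\mathbb{B}^n(r)$ since it fixes $e_0$) reduces the immersion to the claimed form with $\phi_i = 0$ for $i\geq 5$, forcing $n = 4$.

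Finally, the ODE system and the initial/boundary conditions follow by direct computation. Substituting the explicit form of $\Phi$ into $\Delta_g\phi_i + 2\phi_i = 0$ produces the three radial equations, with the isothermal identity $\rho = g_{22} = \psi_1^2 + 4\psi_2^2$ providing the conformal factor. Parity at $s = 0$ yields $\psi_0'(0) = \psi_2'(0) = 0$ and $\psi_1(0) = 0$; setting $a := \psi_2(0) > 0$, the equality $g_{11}(0) = g_{22}(0) = 4a^2$ fixes $\psi_1'(0) = 2a$ (with sign chosen by orientation), and $\|\Phi(0)\|^2 = 1$ gives $\psi_0(0) = \sqrt{1-a^2}$. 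The condition $\psi_0(\pm s_r) = \cos r$ records $\Phi(\partial\mathbb{M})\subset\partial\mathbb{B}^n(r)$, and the remaining boundary condition is the free-boundary relation $\partial_\nu\phi_0 = -\tan r\cdot\phi_0 = -\sin r$ rewritten via $\partial_\nu = \rho^{-1/2}\partial_s$. The main technical obstacle I foresee is the Courant-type nodal-domain step in the second paragraph: one must treat the non-orientable identification with care to justify the bound ``at most two nodal domains'' and to correctly count nodal domains of products $\psi_k(s)\cos k\theta$ on the M\"obius band.
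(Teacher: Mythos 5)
Your overall strategy coincides with the paper's: invoke Theorem A for the rotational symmetry, separate variables in $\theta$, use positivity of the $\sigma_0$-eigenfunction to make $\phi_0$ radial, use Courant's nodal domain theorem to restrict the $\sigma_1$-eigenspace to the modes $k\le 2$, and use the Abel-formula argument to exclude the radial mode from the $\sigma_1$-eigenspace; the derivation of the ODE system, the initial conditions from parity and $\|\Phi\|=1$, and the boundary conditions from the free boundary relations all match. The one place where you genuinely diverge is the step $n=4$. The paper only extracts $n\le 4$ from the dimension of the eigenspace and then excludes $n=3$ by citing an external nonexistence result for free boundary minimal M\"obius bands in $\mathbb{B}^3(r)$ (the reference \cite{Carlos}), whereas you propose a self-contained Gram-matrix argument: expanding $\sum_{i\ge1}\phi_i^2=1-\psi_0^2$ in $\theta$-harmonics forces $M^\top M=\operatorname{diag}(\lambda_1 I_2,\lambda_2 I_2)$, which (once both blocks are nonzero) has rank $4$ and pins down $n=4$ after an orthogonal normalization. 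This is an attractive alternative because it removes the dependence on the external citation.

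However, as written it has a gap precisely where the content of ``$n=4$'' lives: the vanishing of the nonconstant harmonics does \emph{not} force $\lambda_1,\lambda_2>0$, it only forces the block-diagonal shape, and the whole conclusion collapses if one of the two angular modes is absent. You need to rule out both degenerations explicitly. If $\lambda_2=0$ then every $\phi_i$ with $i\ge1$ is a multiple of $\psi_1\cos\theta$ or $\psi_1\sin\theta$; since the deck symmetry forces $\psi_1$ odd, $g_{22}(0)=\lambda_1\psi_1(0)^2=0$ and $\Phi$ fails to be an immersion along $\{s=0\}$. If $\lambda_1=0$ then $\Phi$ takes values in the $3$-dimensional subspace spanned by $e_0$ and the two $k=2$ directions, i.e.\ in a copy of $\mathbb{S}^2$, and a codimension-zero immersion of a non-orientable surface into an orientable one is impossible (it would pull back an orientation). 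With these two observations supplied, your route closes and is, if anything, more self-contained than the paper's; without them, the claim $\lambda_1,\lambda_2>0$ is an unproved assertion. Your final caveat about counting nodal domains on the non-orientable quotient is a fair one, but the paper leans on exactly the same Courant-type count, so it does not distinguish the two arguments.
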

\begin{proof}
First of all, by \textbf{Theorem A}, any free boundary minimal immersion of a Möbius band by first Steklov eigenfunctions is intrinsically rotationally symmetric. Therefore, there exists a positive constant \(s_r > 0\) and a positive function \(\rho : [-s_r, s_r] \times \mathbb{S}^1 \to \mathbb{R}\), which is independent of \(\theta\), such that $g=\rho(s)(ds^2+d\theta^2)$.

As in the proof of Proposition \ref{prop:Characterization_of_FB_minimal_mobius_band}, we apply separation of variables and write the eigenfunctions in the form \(\phi_k(s, \theta) = \psi_k(s)\, s_k(\theta)\), where \(s_k(\theta) = \sin(k\theta)\) or \(s_k(\theta) = \cos(k\theta)\). The radial part \(\psi_k(s)\) then satisfies the second-order differential equation:
\[
\psi_k''(s) = \big(k^2 - 2\rho(s)\big) \psi_k(s).
\] 

We know that the \(\sigma_0\)-eigenfunction can be chosen to be positive. Therefore, the corresponding eigenspace is spanned by \(\{\psi_0(s)\}\), where \(\psi_0\) is a positive solution of the equation
\[
\psi_0''(s) = -2\rho(s)\psi_0(s).
\]
Furthermore, by the Courant's nodal set theorem, the \(\sigma_1\)-eigenfunctions must have exactly two nodal domains. This implies that the eigenspace associated with \(\sigma_1\) is spanned by the set
\[
\{ \psi_0(s),\ \psi_1(s)\cos\theta,\ \psi_1(s)\sin\theta,\ \psi_2(s)\cos 2\theta,\ \psi_2(s)\sin 2\theta \},
\]
where \(\psi_0\) has two zeros in \([-s_r, s_r]\), \(\psi_1\) has exactly one zero in \([-s_r, s_r]\), and \(\psi_2\) does not vanish in the interval. Moreover, as observed in the proof of Proposition \ref{prop:Characterization_of_FB_minimal_mobius_band}, the parity properties of \(\psi_0(s)\) show that we cannot construct an even function satisfying the required properties of having two zeros. Hence, we may reduce the basis of this eigenspace to:
\[
\{ \psi_1(s)\cos\theta,\ \psi_1(s)\sin\theta,\ \psi_2(s)\cos 2\theta,\ \psi_2(s)\sin 2\theta \}.
\]

As a result, the immersion is determined by functions spanning the eigen\-spaces corresponding to the first two Steklov eigenvalues. That is, the space is spanned by:
\[
\{ \psi_0(s),\ \psi_1(s)\cos\theta,\ \psi_1(s)\sin\theta,\ \psi_2(s)\cos 2\theta,\ \psi_2(s)\sin 2\theta \},
\]
where:
\begin{itemize}
    \item \(\psi_0(s)\) is even and positive,
    \item \(\psi_1(s)\) is odd and has exactly one zero in \([-s_r, s_r]\),
    \item \(\psi_2(s)\) is even and positive. 
\end{itemize}

We conclude that \(n \leq 4\). Moreover, as shown by \cite{Carlos}\footnote{The author of \cite{Carlos}, in personal communication, explained to us that the conclusion of his work also holds in geodesic balls of three-dimensional space forms. This generalization is due to an idea suggested by Albert Cerezo.}, there exists no free boundary minimal Möbius band in \(\mathbb{B}^3(r)\). Therefore, it must be the case that \(n = 4\).

Furthermore, we can compose the immersion \(\Phi\) with an isometry \(R\) of \(\mathbb{B}^4(r)\) so that:
\[
R \circ \Phi = \big( \psi_0(s),\ \psi_1(s)\cos\theta,\ \psi_1(s)\sin\theta,\ \psi_2(s)\cos 2\theta,\ \psi_2(s)\sin 2\theta \big).
\]

Finally, since the immersion is isometric, we have
\[
\rho(s) = \left\| \frac{\partial \Phi}{\partial s} \right\|^2 = \left\| \frac{\partial \Phi}{\partial \theta} \right\|^2,
\]
from which it follows that:
\[
\rho(s) = \psi_1^2(s) + 4\psi_2^2(s) = \left(\psi_0'(s)\right)^2+\left(\psi_1'(s)\right)^2+\left(\psi_2'(s)\right)^2.
\]

Moreover, since the immersion descends to the Möbius band (i.e. it is invariant under the quotient that defines the Möbius topology), the coordinate functions must satisfy the following symmetries:
\begin{align*}
    \psi_0(s) &= \phi_0(s, \theta) = \phi_0(-s, \theta + \pi) = \psi_0(-s) \quad \Rightarrow \quad \psi_0 \text{ is even}, \\
    \psi_1(s)\cos\theta &= \phi_1(s, \theta) = \phi_1(-s, \theta + \pi) = \psi_1(-s)\cos(\theta + \pi) = -\psi_1(-s)\cos\theta \\
    &\Rightarrow\quad \psi_1(s) = -\psi_1(-s) \quad \text{(odd)}, \\
    \psi_2(s)\cos 2\theta &= \phi_2(s, \theta) = \phi_2(-s, \theta + \pi) = \psi_2(-s)\cos(2\theta + 2\pi) = \psi_2(-s)\cos 2\theta \\
    &\Rightarrow\quad \psi_2(s) = \psi_2(-s) \quad \text{(even)}.
\end{align*}

Thus, the coordinate functions \(\psi_0, \psi_1, \psi_2\) satisfy the conditions $(i)$, $(ii)$, and $(iii)$ in \textbf{Proposition~\ref{prop:Characterization_of_FB_minimal_mobius_band}}.
\end{proof}

\subsection{Solving the ODE}

Given \( 0 < r \leq \frac{\pi}{2} \), we will prove that there exists an initial condition \( a \in (0,1) \) for the system of equations \eqref{eq:System_of_ODEs_FBMMB} such that all conditions in \textbf{Proposition~\ref{prop:Characterization_of_FB_minimal_mobius_band}} are satisfied.

This system of equations was studied in \cite{ElSoufi_Giacomini_Jazar} and \cite{Jakobson_Nadirashvili_Polterovich_2006}, where the authors proved the existence and uniqueness of a minimal Klein bottle in \(\mathbb{S}^4\) immersed by first eigenfunctions of the Laplacian that maximizes the first normalized eigenvalue

 Observe that the system \eqref{eq:System_of_ODEs_FBMMB} is invariant under the transformation,
\[(y(s), z(s)) \mapsto (-y(-s), z(-s)), \] 
which implies that \( y \) is an odd function and \( z\) 
is an even function. Consequently, condition $(i)$ in \textbf{Proposition\ref{prop:Characterization_of_FB_minimal_mobius_band}} is always satisfied.

El Soufi, Giacomini, and Jazar established in \cite{ElSoufi_Giacomini_Jazar} that this system admits two independent first integrals. Here, a first integral is a function \( H(y, z, y', z',a) \) that satisfies the equation
\[
\dfrac{d}{ds} H(y, z, y', z',a) = y'\frac{\partial H}{\partial y} + z'\frac{\partial H}{\partial z} + y''\frac{\partial H}{\partial y'} + z''\frac{\partial H}{\partial z'} = 0.
\]
for all solutions. In fact, the explicit functions
\begin{equation}\label{eq:first_integral}
\begin{cases}
H_1(y, z, y', z',a) &= (y^2 + 4z^2)^2 - y^2 - 16z^2 + (y')^2 + 4(z')^2 + 4a^2(3 - 4a^2), \\[5pt]
H_2(y, z, y', z',a) &= 12z^2(z^2 - 1) + 3y^2 z^2 + z^2 (y')^2 - 2yy' zz' \\ 
&\quad + (3 + y^2)(z')^2 + 4a^2(3 - 4a^2),
\end{cases}
\end{equation}
are the two independent first integrals of the system \eqref{eq:System_of_ODEs_FBMMB}. The orbits of the solution are therefore contained in the algebraic variety
\[
\begin{cases}
H_1(y, z, y', z',a) = 0, \\ 
H_2(y, z, y', z',a) = 0,
\end{cases}
\]
in the phase space. In particular, in Section 3.4 of \cite{ElSoufi_Giacomini_Jazar}  they show that  for every $0\le a\le 1$ the orbits lie in the region 
\[
y^2 + z^2 \leq 1.
\]

Since the orbits lie in the region \(y^2+z^2\le 1\) the solutions are bounded. This allows us to define the function
\[
x(s) := \sqrt{1 - y^2(s) - z^2(s)}.
\]
Note that \( x^2 + y^2 + z^2 = 1 \). Differentiating this identity yields
\[
x' x = -y' y - z' z \quad \Rightarrow \quad (x' x)^2 = (y' y)^2 + (z' z)^2 + 2 y' y z' z.
\]
Using the second equation in \eqref{eq:first_integral}, we obtain
\begin{align*}
(x' x)^2 &= (y' y)^2 + (z' z)^2 + 12 z^2(z^2 - 1) + 3 y^2 z^2 + (z y')^2 \\
&\quad + (3 + y^2)(z')^2 + 4 a^2(3 - 4 a^2), \\
&= (y^2 + z^2)(y')^2 + (3 + y^2 + z^2)(z')^2 + 12 z^2(z^2 - 1) \\
&\quad + 3 y^2 z^2 + 4 a^2(3 - 4 a^2), \\
&= (1 - x^2)(y')^2 + (4 - x^2)(z')^2 + 12 z^2(z^2 - 1) \\
&\quad + 3 y^2 z^2 + 4 a^2(3 - 4 a^2).
\end{align*}

Next, we use the first equation in \eqref{eq:first_integral}:
\begin{align*}
x^2 \left( (x')^2 + (y')^2 + (z')^2 \right) &= (y')^2 + 4 (z')^2 + 12 z^2(z^2 - 1) + 3 y^2 z^2 \\
&\quad + 4 a^2(3 - 4 a^2), \\
&= - (y^2 + 4 z^2)^2 + y^2 + 16 z^2 - 4 a^2(3 - 4 a^2) \\
&\quad + 12 z^2(z^2 - 1) + 3 y^2 z^2 + 4 a^2(3 - 4 a^2), \\
&= -y^4 - 4 z^4 - 5 y^2 z^2 + y^2 + 4 z^2, \\
&= (1 - y^2 - z^2)(y^2 + 4 z^2), \\
&= x^2 (y^2 + 4 z^2).
\end{align*}

Therefore, if \( x \neq 0 \), we conclude that
\begin{equation} \label{eq: y^2+4z^2=x'^2+y'^2+z'^2}
(x')^2 + (y')^2 + (z')^2 = y^2 + 4 z^2.
\end{equation}

Furthermore, observe that  we prove in \eqref{eq:ode_for_x} that

\begin{equation} \label{eq:_equacao_para_x(s)}
x'' = - 2(y^2 + 4 z^2) x.
\end{equation}
In particular, we are interested in the solutions that remain in the upper hemisphere, that is,
\[
x(s) \geq 0.
\]

Our goal is to find an initial condition \( a \in [0,1) \) such that the system satisfies the condition stated in \textbf{Proposition~\ref{prop:Characterization_of_FB_minimal_mobius_band}}.

To this end, we define the function
\[
F : [0,1) \times \left[0, \frac{\pi}{2}\right] \times \mathbb{R}_{\geq 0} \to \mathbb{R}^2
\]
by
\[
F(a, r, s) = \left( \cos r - x_a(s),\; \sin r \cdot \sqrt{y_a^2(s) + 4 z_a^2(s)} + x'_a(s) \right),
\]
where \( y_a \) and \( z_a \) denote the unique solutions to the system \eqref{eq:System_of_ODEs_FBMMB}, and \( x_a(s) = \sqrt{1 - y_a^2(s) - z_a^2(s)} \).

First, for \( a = \sqrt{\frac{3}{8}} \), Jakobson, Nadirashvili, and Polterovich~\cite{Jakobson_Nadirashvili_Polterovich_2006} proved that the corresponding solutions are periodic and that \( x(s) \) has two zeros within one period. They used this solution to construct a minimal Klein bottle in \( \mathbb{S}^4 \).

We will show that half of this surface corresponds to a free boundary minimal Möbius band contained in the upper hemisphere \( \mathbb{B}^4 \left( \frac{\pi}{2} \right) \). Moreover, we will prove that for each \( r \in \left(0, \frac{\pi}{2} \right ] \), there exists an \( a \in (0,1) \) and a corresponding \( s_r > 0 \) such that
\[
F(a, r, s_r) = (0, 0).
\]

\begin{lema}\label{lema:FBM_Mobius_band_in_spherical-cap}
Given \( r \in \left(0, \frac{\pi}{2} \right) \), there exist \( a \in (0,1) \) and \( s_r > 0 \) such that:
\begin{enumerate}
    \item[$(i)$] For all \( s \in (-s_{r}, s_{r}) \), we have \( x_{a}(s) > \cos r \), and 
    \[
    x_{a}(\pm s_{r}) = \cos{r}.
    \]
    
    \item[$(ii)$] 
    \[
    F\left(a, r, s_{r}\right) = (0, 0).
    \]
    \item[$(iii)$] \( z(s) > 0 \) for all \( s \in [-s_r, s_r] \), and \( y(s) = 0 \) if and only if \( s = 0 \).

    \item[$(iv)$] 
    As \( r \to \frac{\pi}{2} \), we have \( a \to \sqrt{\frac{3}{8}} \), and as \( r \to 0 \), we have \( a \to 0 \).
\end{enumerate}
\end{lema}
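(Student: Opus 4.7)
The plan is to encode the two boundary conditions (position and orthogonality) as a single scalar vanishing condition on the ODE orbit, and then run a continuation argument connecting the linearized regime $a \to 0$ with the Klein bottle solution at $a = \sqrt{3/8}$.

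\textbf{Step 1: Algebraic reduction.} When $x_a(s_r) = \cos r$, the free boundary condition $(x'_a)^2 = \sin^2 r \cdot (y_a^2 + 4 z_a^2)$ combined with the identity \eqref{eq: y^2+4z^2=x'^2+y'^2+z'^2} is equivalent to $W_a(s_r) = 0$, where
$$W_a := (y'_a)^2 + (z'_a)^2 - x_a^2 (y_a^2 + 4 z_a^2).$$
Lagrange's identity $(y^2+z^2)((y')^2+(z')^2) = (yy'+zz')^2 + (yz'-zy')^2$ together with $(yy'+zz')^2 = x^2 (x')^2$ (from $x^2+y^2+z^2=1$) and \eqref{eq: y^2+4z^2=x'^2+y'^2+z'^2} yield the clean identity $W_a = J_a^2$, where $J_a := y_a z'_a - z_a y'_a$. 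Using the ODEs,
$$J_a'(s) = y_a z_a\bigl[(4 - 2y_a^2 - 8z_a^2) - (1 - 2y_a^2 - 8z_a^2)\bigr] = 3 y_a(s) z_a(s), \qquad J_a(0) = -2a^2 < 0.$$
So the pair of conditions in $F(a,r,s_r)=(0,0)$ reduces to $J_a(s_r) = 0$ and $x_a(s_r) = \cos r$.

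\textbf{Step 2: Endpoint analysis.} At $a_0 = \sqrt{3/8}$, the Jakobson--Nadirashvili--Polterovich periodic orbit has $x_{a_0}$ vanishing first at some $s^*>0$, where the symmetry of the Klein bottle solution forces $y'_{a_0}(s^*) = z'_{a_0}(s^*) = 0$, hence $J_{a_0}(s^*) = 0$. Because $y_{a_0}, z_{a_0} > 0$ on $(0, s^*)$, one has $J'_{a_0} > 0$ there, so $s^*$ is the first positive zero of $J_{a_0}$, and $r(a_0) := \arccos x_{a_0}(s^*) = \pi/2$. For $a \to 0^+$, linearizing the system around the origin gives $y_a(s) = 2a\sinh s + O(a^3)$ and $z_a(s) = a\cosh 2s + O(a^3)$, whence
$$J_a(s) = 2a^2\bigl[2\sinh s \sinh 2s - \cosh s \cosh 2s\bigr] + O(a^4).$$
The bracket vanishes exactly at $s_\infty := \operatorname{arcsinh}(1/\sqrt{2})$, at which $x_a^2 = 1 - 6a^2 + o(a^2) \to 1$, giving $r(a) \to 0$.

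\textbf{Step 3: Continuation via the intermediate value theorem.} Define $s_1(a) := \inf\{s > 0 : J_a(s) = 0\}$ and $r(a) := \arccos x_a(s_1(a))$ on the set $A \subset (0, \sqrt{3/8}]$ where $y_a, z_a > 0$ on $(0, s_1(a)]$ and $x_a > 0$ on $[0, s_1(a)]$. On $A$, the implicit function theorem applied to $J_a(s_1(a)) = 0$ (with transversality $J_a'(s_1(a)) = 3y_a z_a > 0$) gives smooth dependence of $s_1$, and hence continuity of $r$. By Step 2, $A$ contains a punctured neighborhood of $0$ and a neighborhood of $\sqrt{3/8}$ in $(0, \sqrt{3/8}]$, and the intermediate value theorem yields, for every $r \in (0, \pi/2)$, some $a = a(r) \in (0, \sqrt{3/8})$ with $r(a) = r$; setting $s_r := s_1(a)$ gives $(ii)$. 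The sign of $x'_a(s_r)$ required in $F$, and condition $(i)$, follow from $x''_a = -2(y_a^2 + 4z_a^2)x_a < 0$ on $[0, s_r]$ together with $x'_a(0) = 0$, which makes $x_a$ strictly concave and decreasing on $(0, s_r]$. Condition $(iii)$ is built into the definition of $A$, and condition $(iv)$ is immediate from $r(\sqrt{3/8}) = \pi/2$ and $\lim_{a \to 0^+} r(a) = 0$.

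\textbf{Main obstacle.} The delicate point is verifying that $A$ is also closed in $(0, \sqrt{3/8}]$, so that the qualitative picture propagates uniformly in $a$. One must rule out bifurcations where the first zero of $J_a$ collides with a zero of $y_a$ or $z_a$, or where $x_a$ reaches $0$ before $s_1(a)$. I expect to handle this by exploiting the compactness of the orbit on the algebraic variety $\{H_1 = H_2 = 0\}$ (confined to $y^2 + z^2 \le 1$ by \cite{ElSoufi_Giacomini_Jazar}) and by identifying the Klein bottle solution as the unique orbit in this family on which $x$ attains $0$; together these should prevent any bad collisions for $a < \sqrt{3/8}$.
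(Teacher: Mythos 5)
Your Step 1 is correct and genuinely clever: the identity $W_a=J_a^2$ with $J_a=y_az_a'-z_ay_a'$, together with $J_a'=3y_az_a$ and $J_a(0)=-2a^2$, really does compress the two boundary conditions into the single scalar condition $J_a(s_r)=0$ once $x_a(s_r)=\cos r$ and $x_a'(s_r)<0$ are known, and your two endpoint computations check out (at $a=\sqrt{3/8}$ the vanishing of $y'$ and $z'$ at the first zero of $x$ can be verified directly from the first integrals rather than by appeal to ``symmetry''). This is a genuinely different route from the paper's. But Step 3 has a real gap, which you yourself flag as the ``main obstacle'': the intermediate value argument requires $r(a)=\arccos x_a(s_1(a))$ to be defined and continuous on a \emph{single connected} parameter set joining a neighborhood of $a=0$ to $a=\sqrt{3/8}$, i.e.\ you must show the good set $A$ is all of $(0,\sqrt{3/8}]$, or that the component of $A$ containing $(0,\epsilon)$ already realizes every $r\in(0,\pi/2)$. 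Ruling out the collision of the first zero of $J_a$ with a zero of $y_a$, $z_a$, or $x_a$ is not a technicality: it is where essentially all of the analytic work lies, and compactness of the orbit on $\{H_1=H_2=0\}$ does not by itself prevent, say, $z_a$ from vanishing before $s_1(a)$ at some intermediate $a$. As written the argument does not close. (Condition (iii) is likewise not ``built in'' for free; it holds only for those $a$ you can certify lie in $A$.)

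The paper sidesteps this entirely by using a further invariant algebraic curve from El Soufi--Giacomini--Jazar, namely $a^2y^2-(3-4a^2)z^2+a^2(3-4a^2)=0$, on which the solution with these particular initial data lies. Combined with $x^2+y^2+z^2=1$ this expresses $y^2$ and $z^2$ explicitly in terms of $x^2$, so that: (a) $z^2\geq a^2>0$ everywhere and $y=0$ forces $x^2=1-a^2=x(0)^2$, giving item (iii) immediately once $x$ is shown to be strictly decreasing; and (b) $\rho=1+4a^2-\tfrac{1}{1-a^2}x^2$, so the matching condition collapses to the explicit polynomial equation $(4a^2-5)\cos^2 r+(1-a^2)(3-8a^2)=0$, which is solved directly for each $r$ with no continuation in $a$ at all. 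If you want to keep your $J_a$ formulation, importing this conic is the cheapest way to close your gap: restricted to it, positivity of $z$, the location of the zeros of $y$, the monotonicity of $x$, and hence the well-definedness and continuity of $s_1(a)$ and $s_r$ all become elementary.
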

\begin{proof}
As the system \eqref{eq:System_of_ODEs_FBMMB} admits two first integrals, it was observed in \textbf{Section~3.2} of~\cite{ElSoufi_Giacomini_Jazar} that, for any \( a \), the curve
\[
a^2 y^2 - (3 - 4a^2) z^2 + a^2(3 - 4a^2) = 0
\]
contains an orbit of a particular solution of the system.

We can use this relation to decouple the equations and find a value of \( a \) for which the required conditions are satisfied. In fact, as we also have that \( x^2+y^2+z^2=1\), from these two vinculum equations we obtain

\begin{align*}
    y^2&= \frac{3-4a^2}{3}-\frac{3-4a^2}{3-3a^2} x^2,\\
    z^2&= \frac{4a^2}{3}-\frac{a^2}{3-3a^2}x^2,
\end{align*}
Therefore, the conformal factor \( \rho = y^2 + 4z^2 \) can be expressed in terms of the function \( x \). In fact, we obtain
\[
\rho = 1 + 4a^2 - \frac{1}{1 - a^2} x^2.
\]
Thus, we can rewrite the condition \( F(a, r, s_r) = 0 \) as
\begin{equation}\label{eq:New_FB_condition_for_the_sitem_of_ode}
\begin{split}
    x_a(s_r) &= \cos r, \\
(x'_a(s_r))^2 &= \sin^2 r \cdot \left(1 + 4a^2 - \frac{1}{1 - a^2} x_a^2(s_r)\right).
\end{split}
\end{equation}
Now, this condition involves only the function \( x \), and we know that \( x \) satisfies equation~\eqref{eq:_equacao_para_x(s)}. Thus, we obtain
\begin{equation}\label{eq:new_equacao_para_x}
\begin{split}
    x''(s) &= -2\left(1 + 4a^2 - \frac{1}{1 - a^2} x^2(s)\right) x(s), \\
    x(0) &= \sqrt{1 - a^2}, \quad x'(0) = 0.
\end{split}
\end{equation}
Observe that since \( x(0) = \sqrt{1 - a^2} > 0 \), there exists \( \tilde{s} > 0 \) such that \( x(s) \geq 0 \) for all \( s \in [0, \tilde{s}] \). Moreover, \( g = \rho (ds^2 + d\theta^2) \) is a Riemannian metric on the cylinder \( [-\tilde{s}, \tilde{s}] \times \mathbb{S}^1 \), because $\rho>0$.

Consider the even extension of \( x \) to the interval \( [-\tilde{s}, \tilde{s}] \). Then equation~\eqref{eq:new_equacao_para_x} is equivalent to 
\[
 \Delta_g x + 2x = 0.
\]
By the Hopf's Lemma, it follows that
\[
\frac{\partial x}{\partial s}(s) < 0
\quad \text{for all } s > 0 \text{ such that } x(s) \ge 0.
\]
 Moreover, by the Sturm Comparison Theorem, we can also show that \( x \) must have a zero. In fact, we compare equation \eqref{eq:new_equacao_para_x} with the linear equation
\begin{equation}\label{eq:equacao_de_comparacao}
    w'' = -2(1 + 4a^2) w.
\end{equation}
Then, by Sturm's  Theorem, between any two consecutive zeros of a solution to~\eqref{eq:equacao_de_comparacao}, there exists at least one zero of \( x \).

Thus, \( x(s) \) is strictly decreasing while it remains positive, and there exists \( s_a > 0 \) such that \( x(s_a) = 0 \). 

Given \( r > 0 \), let \( s_r \) denote the first positive solution of the equation
\[
x(s) - \cos r = 0.
\]
Then, for all \( s \in (-s_r, s_r) \), we have \( x(s) > \cos r > 0 \), and
\[
x(\pm s_r) = \cos r,
\]
where \( x \) is a solution of equation~\eqref{eq:new_equacao_para_x}.
It remains to show that we can find a value of \( a \in (0,1) \) such that the second equation in~\eqref{eq:New_FB_condition_for_the_sitem_of_ode} is satisfied. 

Multiplying both sides of equation~\eqref{eq:new_equacao_para_x} by \( x'(s) \), we reduce it to a first-order equation:
\[
(x'(s))^2 = - (2 + 8a^2)\, x^2(s) + \frac{x^4(s)}{1 - a^2} + (1 - a^2)(8a^2 + 1).
\]

We seek \( a \in (0,1) \) such that
\[
- (2 + 8a^2)\, x^2(s_r) + \frac{x^4(s_r)}{1 - a^2} + (1 - a^2)(8a^2 + 1) 
= \sin^2 r \cdot \left(1 + 4a^2 - \frac{1}{1 - a^2} x^2(s_r)\right),
\]
where \( x^2(s_r) = \cos^2 r \) and \( r \in (0, \frac{\pi}{2}) \) is fixed. This leads to the following rational equation in \( a \):

\begin{align*}
0 =& -(2 + 8a^2)(1 - a^2)\cos^2 r + \cos^4 r + (1 - a^2)^2(8a^2 + 1) \\
& - \sin^2 r (1 + 4a^2)(1 - a^2) + \sin^2 r \cos^2 r.
\end{align*}

After simplification, this becomes:
\[
0 = a^2(4a^2 - 5)\cos^2 r + a^2(1 - a^2)(3 - 8a^2).
\]

Since \( a \neq 0 \), we are left with the polynomial equation:
\begin{equation}\label{eq:polonomia_eq_for_a}
    (4a^2 - 5)\cos^2 r + (1 - a^2)(3 - 8a^2) = 0.
\end{equation}

It is straightforward to verify that for each \( r \in (0, \frac{\pi}{2}) \), this equation has exactly one root \( a \in (0,1) \), and this root satifies $\sqrt{1-a^2}< \cos r.
$. Moreover, we observe that:
As \( r \to 0 \), we have \( a \to 0 \),
as \( r \to \frac{\pi}{2} \), we have \( a \to \sqrt{\frac{3}{8}} \).  

Observe that if \( z(s) = 0 \), then
\[
y^2(s) = 4a^2 - 3 < 0, \quad \text{since } a < \sqrt{\frac{3}{8}}.
\]
Hence, \( z(s) \) never vanishes. Furthermore, if \( y(s) = 0 \), we obtain
\[
x^2(s) = 1 - a^2 = x^2(0).
\]
However, since \( x(s) \) is strictly decreasing, it follows that the only point where \( y(s) = 0 \) occurs is at \( s = 0 \).

\end{proof}

\begin{rmk}\label{rmk:r=pi/2}
 When $r = \frac{\pi}{2}$, the solution of \eqref{eq:polonomia_eq_for_a} is $a = \sqrt{\frac{3}{8}}$, which yields a free boundary minimal Möbius band in the hemisphere. Notably, this particular parameter value coincides with that appearing in the Klein bottle construction of \cite{ElSoufi_Giacomini_Jazar}; see \textbf{Theorem 3.1}.
\end{rmk}

\begin{thmB}\label{thm:TheoremB}
Let \( 0 < r < \pi/2 \). Then there exists a free boundary minimal immersion of a Möbius band by first steklov eigenfunction
\[
\Phi = (\phi_0, \dots, \phi_4) : (\mathbb{M}, g) \to \mathbb{B}^4(r).
\]
\end{thmB}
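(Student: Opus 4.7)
The plan is to obtain Theorem B as a direct application of \textbf{Proposition \ref{prop:Characterization_of_FB_minimal_mobius_band}} together with \textbf{Lemma \ref{lema:FBM_Mobius_band_in_spherical-cap}}, which have already done essentially all of the analytic work. Given $0 < r < \pi/2$, I would first invoke \textbf{Lemma \ref{lema:FBM_Mobius_band_in_spherical-cap}} to select the initial parameter $a \in (0,1)$ (the unique root in $(0,1)$ of the polynomial equation \eqref{eq:polonomia_eq_for_a}) and the half-period $s_r > 0$. This choice produces solutions $y = y_a$, $z = z_a$ of the system \eqref{eq:System_of_ODEs_FBMMB} together with $x(s) = \sqrt{1 - y^2(s) - z^2(s)}$ satisfying the boundary identity $F(a,r,s_r) = (0,0)$, which is precisely the pair of conditions $x(\pm s_r) = \cos r$ and $x'(\pm s_r) = -\sin r \sqrt{y^2(\pm s_r) + 4z^2(\pm s_r)}$ required in item $(iii)$ of \textbf{Proposition \ref{prop:Characterization_of_FB_minimal_mobius_band}}.

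Next I would verify the remaining hypotheses of the proposition. The parity condition in $(i)$ follows from the symmetry $(y(s),z(s)) \mapsto (-y(-s),z(-s))$ of the ODE system, as already remarked in the text, so $y$ is odd and $z$ is even; the nonvanishing statements in $(i)$ are supplied by item $(iii)$ of the lemma. Condition $(ii)$, namely $y^2(s)+z^2(s) \le \sin^2 r$ on $[-s_r,s_r]$, is equivalent to $x(s) \ge \cos r$ on this interval, which is exactly item $(i)$ of the lemma. With $(i)$, $(ii)$, $(iii)$ verified, \textbf{Proposition \ref{prop:Characterization_of_FB_minimal_mobius_band}} immediately produces the desired free boundary minimal immersion
\[
\Phi_{(a,r)}(s,\theta) = \bigl(x(s),\, y(s)\cos\theta,\, y(s)\sin\theta,\, z(s)\cos 2\theta,\, z(s)\sin 2\theta\bigr)
\]
of the Möbius band $\mathbb{M} = [-s_r,s_r] \times \mathbb{S}^1/\!\sim$ into $\mathbb{B}^4(r)$, with the coordinate functions being first Steklov eigenfunctions by the very conclusion of that proposition.

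Since essentially all the substantive work is already packaged in the two preceding results, no real obstacle remains at this stage: the argument is a bookkeeping assembly. The only point worth spelling out carefully is the compatibility of the two coupled boundary conditions at $s = s_r$, that is, why the pair $(x(s_r) = \cos r,\, x'(s_r) = -\sin r \sqrt{\rho(s_r)})$ admits a simultaneous solution in $a$. This is precisely what the algebraic manipulation leading to \eqref{eq:polonomia_eq_for_a} inside \textbf{Lemma \ref{lema:FBM_Mobius_band_in_spherical-cap}} accomplishes, using the decoupling provided by the first integrals $H_1,H_2$. Consequently, the proof of \textbf{Theorem B} should be at most a few lines, citing \textbf{Lemma \ref{lema:FBM_Mobius_band_in_spherical-cap}} and \textbf{Proposition \ref{prop:Characterization_of_FB_minimal_mobius_band}}.
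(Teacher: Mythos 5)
Your proposal is correct and follows essentially the same route as the paper: the paper's own proof of Theorem B is exactly this short assembly, citing Lemma \ref{lema:FBM_Mobius_band_in_spherical-cap} to produce the parameter $a$ and interval $[-s_r,s_r]$ and then applying Proposition \ref{prop:Characterization_of_FB_minimal_mobius_band}. Your explicit verification of hypotheses $(i)$--$(iii)$ is a slightly more careful write-up of the same argument.
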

\begin{proof}
  By \textbf{Lemma \ref{lema:FBM_Mobius_band_in_spherical-cap}}, we can construct solutions of equation \eqref{eq:minimal_condition}, for a specific choice of parameter $a=a_r\in (0,1)$ and on a specific interval $[-s_r,s_r]$ with $s_r>0$, that satisfy all the conditions of \textbf{Proposition~\ref{prop:Characterization_of_FB_minimal_mobius_band}}. This, in turn, allows us to construct a free boundary minimal Möbius band for every \(0 < r < \pi/2\).
\end{proof}

\printbibliography
\end{document}